\theoremstyle{plain}
\newtheorem{lemma}{Lemma}[section]
\newtheorem*{lemma*}{Lemma}
\newtheorem{proposition}[lemma]{Proposition}
\newtheorem{theorem}[lemma]{Theorem}
\newtheorem*{theorem*}{Theorem}
\newtheorem{corollary}[lemma]{Corollary}
\newtheorem*{corollary*}{Corollary}
\theoremstyle{definition}
\newtheorem{definition}[lemma]{Definition}
\theoremstyle{remark}
\newtheorem{remark}[lemma]{Remark}
\newtheorem{notation}[lemma]{Notation}
\crefname{enumi}{item}{items} 
\crefname{figure}{Figure}{Figures} 
\newcommand{\pfstep}[1]{\textit{#1}.}
\newcommand{\SD}{{\mathcal{D}}}
\newcommand{\SC}{{\mathcal{C}}}
\newcommand{\SR}{{\mathcal{R}}}
\newcommand{\R}{{\mathbb{R}}}
\newcommand{\N}{\mathbb{N}}
\let\epsilon\varepsilon
\newcommand{\catname}[1]{{\normalfont\textbf{#1}}}
\newcommand{\Top}{\catname{Top}}
\newcommand{\Set}{\catname{Set}}
\newcommand{\sSet}{\catname{sSet}}
\newcommand{\id}{{\operatorname{id}}}
\newcommand{\Image}{{\operatorname{Im}}}
\newcommand{\Hom}{\operatorname{Hom}}
\newcommand{\MapsPS}{C_{\PS}}
\newcommand{\MapsPL}{C_{\PL}}
\newcommand{\Diff}{{\operatorname{Diff}}}
\newcommand{\GammaPS}{\Gamma_{\PS}}
\newcommand{\Op}{{\mathcal{O}p}}
\newcommand{\Sol}[1][\SR]{{\operatorname{Sol}}(#1)} 
\newcommand{\SolF}[1][\SR]{{\operatorname{FSol}}(#1)} 
\newcommand{\SolPS}[1][\SR]{{\operatorname{Sol_{\PS}}}(#1)} 
\newcommand{\sSolPS}[1][\SR]{{\operatorname{sSol_{\PS}}}(#1)} 
\newcommand{\pif}{\pi_f}
\newcommand{\dist}[1]{d_{C^{#1}}}
\newcommand{\SRcont}{\SR_{\operatorname{cont}}}
\newcommand{\SRimm}{\SR_{\operatorname{imm}}}
\newcommand{\SRsubm}{\SR_{\operatorname{subm}}}
\newcommand{\SRtransv}{\SR_{\operatorname{transv}}}
\newcommand{\SRverygenpos}{\SR_{\operatorname{VeryGenPos}}}
\newcommand{\str}{\operatorname{star}}
\newcommand{\cl}{\operatorname{Cl}}
\newcommand{\strN}[1]{\operatorname{star}^{#1}}
\newcommand{\strT}[2][T]{\str(#2,#1)}
\newcommand{\ring}{\operatorname{ring}}
\newcommand{\ringT}[2][T]{\ring(#2,#1)}
\newcommand{\SDelta}{\bm{\Delta}}
\newcommand{\op}{\mathrm{op}}
\newcommand{\Sing}{\operatorname{Sing}}
\newcommand{\face}[1]{\delta_{#1}}
\newcommand{\degen}[1]{\sigma_{#1}}
\newcommand{\lin}{\mathrm{lin}}
\newcommand{\topd}{\mathrm{top}}
\newcommand{\PS}{\mathrm{PS}}
\newcommand{\PL}{\mathrm{PL}}
\newcommand{\maxa}{{\mathrm{max}}}
\newcommand{\mina}{{\mathrm{min}}}
\DeclareMathOperator{\rmax}{{r_\maxa}}
\DeclareMathOperator{\rmin}{{r_\mina}}
\newcommand{\lext}{\operatorname{LinExt}}
\newcommand{\lextD}[1][\Delta]{\operatorname{LinExt}_{#1}}
\newcommand{\join}[2]{\operatorname{join}(#1,#2)}
\newcommand{\joins}[4]{\operatorname{join}_{#1}(#2,#3,#4)}
\newcommand{\interpolate}[4]{\operatorname{interpolate}_{#1}(#2,#3,#4)}
\newcommand{\folconst}[1]{\fol(#1)}
\newcommand{\fol}{\mathscr{F}}
\newcommand{\trans}{{\mathrel{\pitchfork}}} 
\newcommand{\trivbr}[1][n]{\underline{\R^{#1}}}
\newcommand{\sSolPST}[1][\SR]{{\operatorname{sSol_{\PS}^T}}(#1)}
\newcommand{\aspan}{\operatorname{ASpan}}
\newcommand{\lspan}{\operatorname{LSpan}}
\newcommand{\maxcoeff}{{\Lambda}}
\begin{document}

	\title{Jiggling: an h-principle without homotopical assumptions}
	
	\subjclass[2020]{Primary: 57R45. Secondary: 
        57Q65, 
        57R05, 
        57R35
        .} 
	\date{\today}
	
	\keywords{h-principle, jiggling, piecewise smooth, differential relations}
	
	\author{Anna Fokma}
	\address{Utrecht University, Department of Mathematics, Budapestlaan 6, 3584~CD Utrecht, The Netherlands}
	\email{annaf.math@outlook.com}
	
	\author{\'Alvaro del Pino}
	\address{Utrecht University, Department of Mathematics, Budapestlaan 6, 3584~CD Utrecht, The Netherlands}
	\email{a.delpinogomez@uu.nl}
	
	\author{Lauran Toussaint}
	\address{VU Amsterdam, Department of Mathematics, De Boelelaan 1111, 1081~HV Amsterdam, The~Netherlands}
	\email{l.e.toussaint@vu.nl}
	
\begin{abstract}
The jiggling lemma of Thurston shows that any triangulation can be jiggled (read: subdivided and then perturbed) to be in general position with respect to a distribution.

Our main result is a generalization of Thurston's lemma. It states that piecewise smooth solutions of a given open and fiberwise dense differential relation $\SR \subset J^1(E)$ of first order can be constructed by jiggling arbitrary sections of $E$. Our statement also holds in parametric and relative form. We understand this as an h-principle without homotopical assumptions for piecewise smooth solutions of $\SR$. 
\end{abstract}

\maketitle
	\tableofcontents
	
	\section{Introduction}

In~\cite{Th1,Th2}, Thurston established the h-principle for foliations. One of the ideas that goes into the proof is to decompose the manifold using a triangulation whose simplices are in general position with respect to a given starting distribution; see \Cref{def:genposTriang}. To construct such triangulations, Thurston states what is now known as the `jiggling lemma':
\begin{lemma*}[Thurston's jiggling lemma \cite{Th2}]
Consider a manifold $M$ endowed with a distribution $\xi$. Any smooth triangulation of $M$ can be subdivided and subsequently perturbed to be in general position with respect to $\xi$, over any given compact subset. Moreover, the perturbation can be assumed to be $C^1$-small. 
\end{lemma*}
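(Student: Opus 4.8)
The plan is to run Thurston's original argument, which reduces the lemma to a linear-algebraic genericity statement by means of a carefully chosen subdivision. First I would localize: being in general position with respect to $\xi$ is an open condition on each simplex, and over the given compact set only finitely many simplices are involved, so it suffices to repair the triangulation successively over the members of a finite cover of a neighborhood of that compact set by coordinate charts --- a definite transversality margin obtained over one chart is preserved by all later, sufficiently $C^1$-small perturbations. Fix such a chart $\phi\colon U\to\R^n$. After shrinking $U$ we may assume $\phi_*\xi$ is $C^0$-close to a \emph{constant} plane field $\underline P$, and after subdividing the smooth triangulation enough times we may assume every simplex that meets the compact set inside $U$ is contained in $U$, is small, and --- after rescaling to unit size --- is $C^1$-close to an affine simplex. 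This reduces the local problem to an affine triangulation of a piece of $\R^n$ together with the constant distribution $\underline P$, up to an error that will be absorbed into a transversality margin below.

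The combinatorial core is the choice of subdivision. I would use the edgewise (``standard''/Freudenthal) subdivision $\mathrm{sd}$, which cuts an affine $n$-simplex into $2^n$ affine sub-simplices and --- unlike barycentric subdivision, whose iterates degenerate the sub-simplices --- has two features that are stable under iteration: the fatness (say, inradius over diameter) of the sub-simplices of $\mathrm{sd}^N(\Delta)$ is bounded below independently of $N$; and every edge of every sub-simplex of $\mathrm{sd}^N(\Delta)$ is parallel to an edge of $\Delta$. The second feature forces the affine direction of every face of every simplex of $\mathrm{sd}^N(\Delta)$ to be the linear span of a subset of the $\binom{n+1}{2}$ edge vectors of $\Delta$, hence to lie in a \emph{finite} set $\mathcal W$ of linear subspaces of $\R^n$, independent of $N$.

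Then I would run the genericity step. A displacement $\eta$ of the vertices of $\mathrm{sd}^N(\Delta)$ carries each $W\in\mathcal W$ to a nearby subspace, and I want all of these simultaneously in the most transverse possible position relative to $\underline P$ (that is, with $\dim(W\cap\underline P)$ as small as a dimension count allows), with a uniform margin. For a generic $\eta$ this holds: for each of the finitely many combinatorial face-types the bad displacements form a proper algebraic subset, so their union is nowhere dense; a nowhere-dense set is avoided by arbitrarily small $\eta$, and taking $\eta$ small enough relative to the mesh $2^{-N}$ and to $\delta$ keeps the induced piecewise-linear self-map an embedding that is $\delta$-small in $C^1$. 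The uniform margin, being open data, survives replacing $\underline P$ by $\phi_*\xi$ and transporting back through $\phi^{-1}$, provided $N$ was taken so large that the oscillation of $\xi$ across each simplex --- controlled via the fatness bound --- stays below the margin. For the relative statement I would leave the triangulation unsubdivided and unperturbed on a neighborhood of the closed set where it is already in general position, interpolating the subdivision and perturbation across a collar, the fatness bound again keeping the interpolating simplices transverse. Globalization is then the finite, chart-by-chart induction mentioned at the start.

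The genuinely delicate point, I expect, is twofold and self-reinforcing: one needs a subdivision operator that is \emph{simultaneously} fatness-preserving under unbounded iteration and direction-finite --- ordinary barycentric subdivision fails the first, and losing either property lets the transversality margins decay to zero along the subdivision --- and one must reconcile the two opposing smallness requirements, namely that the mesh be fine relative to the modulus of continuity of $\xi$ while the vertex displacement be fine relative to the mesh (so the perturbation is $C^1$-, not merely $C^0$-, small). These are compatible only because the genericity statement is quantified uniformly over the finitely many simplex types rather than proved simplex by simplex.
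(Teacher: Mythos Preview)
Your sketch is essentially Thurston's original argument, and the paper acknowledges that route as valid. The paper, however, takes a genuinely different path. Rather than running a Grassmannian genericity argument directly, it packages ``very general position'' as an open, fiberwise dense first-order relation $\SRverygenpos \subset J^1(M,M)$ (a jet belongs to it when every model simplex is transverse to the pulled-back constant distribution), and then invokes its general jiggling theorem for such relations. That theorem in turn replaces your vertex-perturbation step by a \emph{coloring} argument borrowed from Donaldson: top simplices are colored so that simplices of the same color have disjoint stars, and one slope-perturbs color by color, with each later perturbation small enough not to destroy the transversality already obtained. What your approach buys is directness; what the paper's approach buys is a uniform mechanism (the same engine produces piecewise immersions, contact forms, etc.), clean relative and non-compact statements, and --- as the paper explicitly notes --- avoidance of the ``size and distance estimates in the Grassmannian'' that you correctly identify as the delicate point.

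One imprecision worth flagging: when you say that a displacement $\eta$ of the vertices of $\mathrm{sd}^N(\Delta)$ ``carries each $W\in\mathcal W$ to a nearby subspace'' and that the bad $\eta$ for each of the ``finitely many combinatorial face-types'' form a proper algebraic set, you are conflating face \emph{types} with \emph{faces}. An independent displacement of the fine vertices moves different instances of the same $W$ to different subspaces, and there are $\sim 2^{nN}$ faces, each with its own bad set. The finiteness of $\mathcal W$ only collapses these to finitely many conditions if the perturbation is constrained (e.g.\ a single linear map of the chart, which moves all instances of a given $W$ together); otherwise one must control the transversality margin uniformly over exponentially many simplices, which is exactly where Thurston's quantitative Grassmannian estimates enter and what the paper's coloring sidesteps. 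A minor correction: in dimension $\geq 3$, edgewise subdivision does introduce edge directions not parallel to an edge of $\Delta$ (from the diagonal used to split the central octahedron), though the set of directions remains finite, which is all your argument needs.
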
 
This result has found numerous applications in the construction of geometric structures on manifolds, beyond foliations. These include Haefliger structures with transverse geometry~\cite{LM}, contact structures~\cite{colin1999stabilite,Vo16}, and Engel structures~\cite{CPPP,PV}.

\subsection{Jiggling sections}

The main contribution of the present paper is the following observation: general h-principles in the piecewise smooth category can be established using (a suitable generalization of) jiggling. Concretely, we prove a generalization of Thurston's lemma in which, instead of jiggling a triangulation so that it is in general position, we jiggle a section of a fiber bundle $E \to M$ to yield a piecewise smooth solution of a given differential relation $\SR \subset J^1(E)$.

We assume a metric has been fixed on $J^1(E)$ in order to talk about $C^1$-distances. This allows us to quantitatively control of the size of the jiggling:
\begin{definition}\label{def:jiggling}
Consider a fiber bundle $E \to M$ and let $s,s': M \to E$ be piecewise smooth sections with respect to triangulations $T$ and $T'$ of $M$, respectively. We say that $(s',T')$ is an \textbf{$\epsilon$-jiggling} of $(s,T)$ if $T'$ is a subdivision of $T$ and $d_{C^1}(s,s') < \varepsilon$, where $\varepsilon$ is a positive constant if $M$ is compact, and a positive function otherwise.
\end{definition}

In order for the jiggling process to produce solutions of $\SR \subset J^1(E)$, we need to impose two conditions on the differential relation. First of all, as is common for h-principles, we want the relation $\SR$ to be open as a subset of $J^1(E)$. Secondly, we want it to be fiberwise dense; this means that any jet $\sigma \in J^1(E)$ can be made to lie in $\SR$ using an arbitrarily small perturbation of its slope. We formalize this as follows, where $\pif: J^1(E) \to E$ is the front projection:
\begin{definition} \label{def:fiberweiseDense}
A relation $\SR \subset J^1 (E)$ is \textbf{fiberwise dense} if, for all $x \in E$, the restriction of $\SR$ to the fiber of $\pif$ over $x$ is dense. 
\end{definition}

We now state our main theorem:
\begin{restatable*}{thm}{jigglingMfd} \label{th:jigglingMfd}
Let $M$ be a manifold, possibly open, endowed with a smooth triangulation $T:|K| \to M$. Consider a fiber bundle $E \to M$ and an open and fiberwise dense differential relation $\SR \subset J^1(E)$. Then, given 
\begin{itemize}
    \item a continuous function $\epsilon:M \to \R_+$,
    \item a piecewise smooth section $s: M \to E$, and
    \item a subcomplex $Q \subset M$ of $T$ such that $s|_{\Op (Q)}$ is a solution of $\SR$,
\end{itemize}
there exists an $\epsilon$-jiggling $(s',T')$ of $(s,T)$ such that
\begin{itemize}
    \item $s'$ is a piecewise smooth solution of $\SR$ with respect to $T'$, and
    \item $s'|_{Q} = s|_{Q}$.
\end{itemize}
\end{restatable*}
The idea of the proof is as follows: The conditions we pose on the relation $\SR$ (being open and fiberwise dense) imply that the desired solutions exist over any sufficiently small simplex. As such, after subdividing the triangulation, we have local solutions over all simplices, and it remains to glue them together to a global solution. This is slightly delicate, so we assign colors to the top-dimensional simplices in $M$ such that a simplex and all its adjacent simplices are of a different color. Then, by induction on the color of a simplex, we jiggle the section over all simplices of the same color. If our jiggling is small enough, the solution we constructed over simplices of previous colors is only modified slightly and remains a solution.

The idea of jiggling `one color at a time' was inspired by the coloring argument used by Donaldson in \cite{Do}, which he used to achieve quantitative transversality in the asymptotically holomorphic setting. Coloring allows us to avoid various size and distance estimates in the Grassmannian appearing in Thurston's original proof. It is in this simplification that our argument deviates strongly from his, and therefore it can be more readily generalized.

\subsection{Jiggling as an h-principle}

\Cref{th:jigglingMfd} naturally implies a weak equivalence between the spaces of piecewise smooth solutions of $\SR$ and continuous sections of $E$, with one subtlety. Since different sections are piecewise with respect to different triangulations, the space of triangulations is lurking in the background. However, this is a (semi-)simplicial set, not a topological space.

This leads us to define the simplicial set $\sSolPS$ of piecewise smooth solutions of $\SR$ by letting $(\sSolPS)_n$ be the set of piecewise smooth sections $s: M \times \Delta^n \to E$ that solve $\SR$. We compare this simplicial set to the singular complex $\Sing(\Gamma^0(E))$ of the space $\Gamma^0(E)$ of continuous sections of $E$.
\begin{restatable*}{thm}{wheSSets} \label{th:wheSSets}
Let $\SR \subset J^1(E)$ be an open and fiberwise dense differential relation. The inclusion $\sSolPS \hookrightarrow \Sing (\Gamma^0(E))$ is weak homotopy equivalence of simplicial sets.
\end{restatable*}
We pose a couple of interesting open questions: Can a version of Theorem \ref{th:wheSSets} be established for higher order jets? Moreover, can the assumption of fiberwise density be replaced by some form of ampleness (as is required in convex integration)?

We remark that Theorem \ref{th:wheSSets} is an h-principle without homotopical assumptions. Concretely: since our solutions $s: M \rightarrow E$ are meant to be piecewise smooth, their first jet extensions $j^1s: M \rightarrow \SR$ can have discontinuities along the triangulation. As such, they do not define a homotopy class of a map $M \rightarrow \SR$. This is why no formal data appears in the statement.

The best known example of an h-principle without homotopical assumptions is the arborealization program in symplectic/contact topology~\cite{alvarez2020positive,alvarez2021arborealizationI,alvarez2021arborealizationII}. One of its goals, which is similar in flavor to Thurston's jiggling, is to show that front projections of legendrians can be simplified so that their singularities are of so-called arboreal type. This can then be used to simplify the singularities of skeleta of Weinstein manifolds.

A key feature of arboreal singularities is that there are only finitely many models in each dimension. This is not true for singularities of triangulation type. This motivates a few more open questions:
\begin{itemize}
    \item Can one establish a version of Theorem \ref{th:wheSSets} for solutions with arboreal-type singularities?
    \item What about solutions whose germs along said arboreal singularities are modelled on some finite/countable list?
    \item Similarly, in the triangulated setting, can one establish a version of Theorem \ref{th:wheSSets} in which finitely many models over each simplex are allowed?
\end{itemize}

\subsection{Applications} \label{sec:introAppl}

We now state some consequences of our main theorems.

\subsubsection{General position}

The condition that a triangulation is in general position with respect to a distribution $\xi$ is, of course, triangulation dependent. Because of this we have to do a bit of work to phrase it using an open and fiberwise dense relation. We do this in \cref{sec:JigglingGenPosTriang}, establishing Thurston's jiggling lemma (\cref{cor:jigglingTriangOpen}). Something that is new in this direction is that we establish the result for manifolds that are not necessarily compact, which was not addressed by Thurston~\cite{Th2}. In fact, from a technical standpoint, an interesting feature of the proof is that we have to develop some theory regarding crystalline subdivision in the non-compact setting, which we do in \cref{sec:crystallineConingOff}.

Another application is the construction of triangulations that are in general position with respect to multiple distributions at the same time; a statement that has appeared already in applications, e.g.~\cite{Vo16,PV}.

\subsubsection{Piecewise maps of full rank}

According to Thurston's jiggling lemma, any map between manifolds can be jiggled to be piecewise transverse to a distribution or to be piecewise of maximal rank (immersive/submersive). These statements can be upgraded to full h-principles without homotopical assumptions, for the corresponding differential relations, using our results:
\begin{restatable*}{cor}{wheMaps}
The following inclusions of simplicial sets are weak homotopy equivalences:
    \begin{align*}
        \sSolPS[\SRtransv] &\hookrightarrow \Sing(C^0(M,N)), \\
        \sSolPS[\SRimm] &\hookrightarrow \Sing(C^0(M,N)) \text{ if } \dim M < \dim N, \text{ and } \\
        \sSolPS[\SRsubm] &\hookrightarrow \Sing(C^0(M,N)) \text{ if } \dim M > \dim N.
    \end{align*}
\end{restatable*}

\subsubsection{Piecewise differentiable geometric structures}

\Cref{th:jigglingMfd} tells us that every non-vanishing 1-form is homotopic to a piecewise smooth contact form. Denoting by $\SRcont$ the relation for contact forms, we obtain the following h-principle:
\begin{restatable*}{cor}{appContactWhe}
	The inclusion $\sSolPS[\SRcont] \hookrightarrow \Sing\left( \Gamma^0 \left( \Lambda^1 \left(T^*M \right)\setminus\{0\} \right) \right)  $ is a weak homotopy equivalence.
\end{restatable*}

This complete flexibility in the piecewise smooth setting should be compared to the rigidity that appears in the study of other geometric structures of low regularity. Examples include $C^0$-contact structures~\cite{MuSp,El97wave,RZ20}, $C^0$-symplectic structures~\cite{Op09,HLS15}, Lipschitz symplectic structures~\cite{Jo22}, and PL symplectic structures~\cite{BD24}.

With that said, one may wonder whether one can ``rigidify'' the study of piecewise differentiable contact structures as follows. In dimension $3$, one could study instead the differential relation $\SR^+$ defining positive contact structures (e.g. compatible with a given orientation of the ambient manifold). This relation falls outside the scope of jiggling, since it is not fiberwise dense. We conjecture that there is rigidity in this setting. Moreover, it is unclear to us what to expect in the higher dimensional case.

\subsection{Outline of the paper}

In \cref{sec:prelim} we discuss the preliminaries of the paper, including jet bundles, triangulations and simplicial sets. 

Sections \ref{sec:crysSubdiv} through \ref{sec:perturbing} contain the ground work for jiggling. In \cref{sec:crysSubdiv} we discuss crystalline subdivisions of triangulations, as well as some other subdivision schemes required to work in the relative setting. In \cref{sec:linearizing} we show how we can (locally) approximate a section by its linearization, reducing the jiggling of general sections to the jiggling of piecewise linear maps. In \cref{sec:perturbing} we show that for open and fiberwise dense relations we can tilt any section to yield a local solution over any sufficiently small simplex.

The proof of the main theorem (\cref{th:jigglingMfd}) is contained in \cref{sec:jiggling}. The h-principle without homotopical assumptions that follows is discussed in \cref{sec:whe}. Applications are discussed in \cref{sec:app}.

\subsection{Acknowledgements}

The authors want to thank Miguel Barata for reading a draft of this paper. This paper forms a part of the first author's PhD thesis. Hence, the authors would like to thank the reading committee, consisting of M\'{e}lanie Bertelson, Kai Cieliebak, Ga\"{e}l Meigniez, Ieke Moerdijk and Thomas Rot, for reviewing the thesis and offering their feedback.

The second author is funded by the NWO (Dutch Research Council) project VI.Vidi.223.118. The third author is funded by the NWO project ``proper Fredholm homotopy theory'' (project number OCENW.M20.195) of the research programme Open Competition ENW M20-3.
	\section{Preliminaries} \label{sec:prelim}

In this section we cover some basics and fix notation. We discuss jet spaces and h-principles in \cref{sec:JetHPrinc}. We continue with triangulations in \cref{sec:triangul}, after which we end with a brief account of simplicial sets in \cref{sec:simplSets}.

\subsection{Jet spaces and h-principles} \label{sec:JetHPrinc}

In this section we briefly recall the standard terminology on (first order) jet spaces and h-principles. For a more elaborate overview we refer to \cite{CiElMi,Gr86}.

Given a fiber bundle $E \to M$, its \textbf{first jet bundle} $J^1(E) \to M$ is the bundle of first order Taylor polynomials of sections of $E$. The projection is defined by sending a Taylor polynomial at $x\in M$ to the point $x$. We observe that such a Taylor polynomial at $x$ can be represented by a section locally defined around $x$. Vice versa, every section $s : M \to E$ induces a section $j^1 s : M \to J^1(E)$ which we call its first jet. Since not every section of $J^1(E)$ comes from a section of $E$, we call sections which do arise in this manner \textbf{holonomic}. When interested in maps instead of sections, we define the first jet bundle $J^1(M,N)\to M$ of maps $M\to N$ in an analogous manner.

The terminology of jet spaces has proven to be a natural language to generalize the notion of a partial differential \emph{equation} (PDE) to a partial differential \emph{relation}.

\begin{definition}
    A (first order) \textbf{partial differential relation} is a subset $\SR \subset J^1(E)$.
\end{definition}

We distinguish two notions of solutions of a relation $\SR$. Firstly, there exists a space $\SolF$ of \textbf{formal solutions} whose elements are sections of $J^1(E)$ with image in $\SR$. We endow this space with the weak $C^0$-topology. Secondly, there exists a space $\Sol \subset \Gamma^0(E)$ of \textbf{solutions} whose elements are those sections of $E$ whose jet is a formal solution. We endow the space $\Sol$ with the weak $C^1$-topology.

There is a natural inclusion,
\[ \Sol \hookrightarrow \SolF,\quad s \mapsto j^1s,\]
which allows us to compare the space of solutions $\Sol$ to the space of formal solutions $\SolF$, which is more algebraic in nature. The theory of h-principles concerns itself with the question whether the above inclusion is a weak homotopy equivalence. If it is, we say that $\SR$ satisfies the \textbf{(complete) h-principle}. Gromov's method of flexible sheaves~\cite{gromov1969stable} shows that the h-principle holds for a large class of relations (open and $\Diff$-invariant) on open manifolds. Partial and negative results have also been proven, such as~\cite{Be,El92,taubes1994seiberg}.

Multiple variations of the complete h-principle exist. We mention two. We follow the convention that $\Op(C)$ denotes an unspecified arbitrarily small open neighborhood of $C$.
\begin{definition}
The h-principle for $\SR$ holds
	\begin{itemize}
        \item \textbf{relatively} if given a section $s \in \SolF$ such that $s|_{\Op(C)}$ is holonomic, there exists a holonomic section $s' \in \Sol$, which is homotopic to $s$ relative $C$, for every closed set $C \subset M$.
		\item \textbf{parametrically} if given a continuous family $S : K \to \SolF$, there exists a continuous family $S': K \to \Sol$ homotopic to $S$. Here $K$ is a compact manifold acting as parameter space.
	\end{itemize}
\end{definition}

\subsection{Simplicial complexes} \label{ssec:simplicialComplexes}

In this section we discuss simplicial complexes and linear polyhedra. It is worth pointing out that many of the arguments in the paper are indeed linear in nature. Using them in the manifold setting will involve working chart by chart and taking into account various estimates to bound the non-linearity of the transition functions between them.

\subsubsection{Simplicial complexes}

We define the standard $m$-dimensional simplex $\Delta^m \subset \R^m$ as 
\begin{equation*}
	\Delta^m = \{(t_1,\dots,t_m) \in \R^m \mid \sum_{i=1}^m t_i \leq 1 \text{ and } t_i \geq 0 \text{ for all } i \}.
\end{equation*}
A \textbf{linear simplex} is then any subset of Euclidean space that is affinely isomorphic to a standard simplex. By a \textbf{face} of a linear simplex we refer to a subsimplex of any dimension.

Simplices can be glued along their codimension-1 faces if they form a so-called simplicial complex:
\begin{definition}
A \textbf{simplicial complex} is a locally finite set $K$ of linear simplices in an Euclidean space such that
	\begin{itemize}
		\item if $\sigma \in K$ then its faces are also in $K$, and
		\item if $\sigma,\sigma'\in K$ then $\sigma \cap \sigma'$ is either empty or a face of both $\sigma$ and $\sigma'$.
	\end{itemize}
\end{definition}
We denote by $K^{(\topd)}$ the set of top-dimensional simplices of a simplicial complex $K$, and can also define subcomplexes in an obvious manner.

A linear polyhedron is the topological space that is spanned by a simplicial complex. We introduce the word ``linear'' to make a distinction with polyhedra living in arbitrary manifolds, to be considered later.
\begin{definition}
	A \textbf{linear polyhedron} $P$ is the union of the linear simplices in a simplicial complex $K$. That is, $P = \cup_{\Delta \in K} \Delta$. In this case, we call $K$ a \textbf{triangulation} of $P$ and write $P = |K|$.
\end{definition}
Due to local finiteness, our simplicial complexes will consist of countably many linear simplices. In particular, the corresponding polyhedra may be be non-compact. Moreover, for our purposes it is enough to consider simplicial complexes in which the dimension of the simplices is bounded above. Furthermore, we will often work with simplicial complexes of \textbf{pure} dimension. That is, every simplex should be contained in a simplex of top dimension.

It proves useful to be able to refer to the maximal simplicial complex contained in a subset. Hence we introduce the following terminology.
\begin{definition} \label{def:SCinSubset}
    Let $K$ be a simplicial complex in $\R^N$ and let $U \subset \R^N$ be a subset. We define $K \cap U$ as the \textbf{maximal simplicial subcomplex of $K$ contained in $U$}. That is, $K \cap U$ consists of those $\Delta \in K$ such that $|\Delta| \subset U$. 
\end{definition}

\subsubsection{Adjacency}

The appropriate notion of a neighborhood of a linear simplex in a simplicial complex is that of a star, which is defined using the concept of adjacency. Related is the notion of a ring.
\begin{definition} \label{def:starRing}
Consider a simplicial complex $K$ including a linear simplex (or more generally, a subcomplex) $Q$.
\begin{itemize}
\item Two linear simplices are \textbf{adjacent} if they share a face.
\item The \textbf{star} $\str{Q}$ of $Q$ is the set of all its adjacent simplices and their faces.
\item The \textbf{closure} $\cl(A)$ of subset $A$ of a simplicial complex is the smallest subcomplex containing the subset $A$.
\item We define the \textbf{ring} around $Q$ as $\ring{Q} = \cl(\str (Q) \setminus Q)$.
\end{itemize}  
\end{definition}
We write $\strT[K]{Q}$ and $\ringT[K]{Q}$ if we want to emphasize the simplicial complex we are working with. However, to simplify notation, we only indicate $K$ once in expressions in which it occurs multiple times. For instance, we write $\ring(\strT[K]{Q})$ instead of $\ringT[K]{\strT[K]{Q}}$. 
We also write $\strN{n}$ for the $n$-fold iteration of $\str$.

\subsubsection{Subdivisions and isomorphisms}

The most restrictive notion of equivalence between simplicial complexes reads:
\begin{definition}
Two simplicial complexes $K$ and $K'$ are \textbf{isomorphic} if there exists a homeomorphism $F: |K| \to |K'|$, inducing a bijection between simplices, such that for each $\Delta \in K$ the restriction $F|_\Delta$ is linear.
\end{definition}

A more ``local'' version of equivalence for two simplicial complexes is the following:
\begin{definition} \label{def:subcomplAgree}
    Two simplicial complexes $K$ and $K'$ \textbf{agree} on a subset $Q \subset \R^N$ if there exist subcomplexes $K_1$ and $K_2$ of respectively $K$ and $K'$ such that
    \begin{itemize}
        \item $K_1$ and $K_2$ are isomorphic, 
        \item $K_1$ is the largest subcomplex of $K$ such that $|K_1|\subset Q$, and 
        \item $K_2$ is the largest subcomplex of $K'$ such that $|K_2|\subset Q$.
    \end{itemize}
\end{definition}

Alternatively, we can subdivide any simplicial complex to obtain a new simplicial complex triangulating the same polyhedron.
\begin{definition}
A \textbf{subdivision of a simplicial complex} $K$ is a simplicial complex $K'$ such that each $\Delta \in K$ satisfies $\Delta= \cup_{i \in I} \Delta'_i$ for a finite collection of $\Delta'_i \in K'$.
\end{definition}
We remark that any two triangulations of a given polyhedron have a common subdivision.

\subsubsection{Piecewise linear maps}

Between polyhedra, the natural class of maps to consider is:
\begin{definition}
Let $P$ be a linear polyhedron. A map $f: P \rightarrow \R^n$ is \textbf{piecewise linear} if:
\begin{itemize}
    \item the map $f$ is continuous, and
    \item the restriction of $f$ to each simplex of $K$ is affine, where $K$ is some triangulation of $P$.
\end{itemize}
\end{definition}
If we want to emphasize the role of $K$, we will write that $(f,K)$ is a piecewise linear map or that $f$ is piecewise linear with respect to $K$. We denote the set of piecewise linear maps by $\MapsPL(P,\R^n)$. We topologize it below.

We can thus speak of piecewise linear maps between linear polyhedra, yielding the category of linear polyhedra. Isomorphic simplicial complexes, as defined above, yield isomorphic polyhedra. Conversely, two polyhedra are isomorphic if they admit triangulations that are isomorphic.

\subsubsection{Piecewise smooth maps} \label{ssec:PSmaps}
We also consider the more general class of maps:
\begin{definition}
Let $P$ be a linear polyhedron and $N$ a manifold. A map $f: P \rightarrow N$ is \textbf{piecewise smooth} if it is continuous and, for some triangulation $K$, the maps $f|_\Delta$ are smooth for all $\Delta \in K$.
\end{definition}

The set of piecewise smooth maps will be denoted by $\MapsPS(P,N)$. We note that $\MapsPS(P,\R^n)$ contains $\MapsPL(P,\R^n)$. The set $\MapsPS(P,N)$ can readily be endowed with the $C^0$-topology (either the weak or the strong version), by interpreting it as a subset of $C^0(P,N)$.

To define the $C^r$-topology in $\MapsPS(P,N)$, it is convenient to assume that $P$ is of pure dimension. We fix an auxiliary triangulation $K$ of $P$ to make sense of $J^r(\Delta,N)$ for each top simplex $\Delta \in P$. On each $J^r(\Delta,N)$ we fix a metric, so that we can make sense of the $C^r$-metric on $\MapsPS(\Delta,N)$. Then we define:
\begin{definition} \label{def:CrTopPolyhedron}
Consider a pair of maps $f_1, f_2 \in \MapsPS(P,N)$, each piecewise smooth with respect to a triangulation $K_i$ of $P$, for $i=1,2$. Consider a triangulation $K'$ subdividing $K_1$, $K_2$ and $K$. We write $\dist{r}(f_1,f_2) < \epsilon$ if for every top-dimensional simplex $\Delta \in K'$ we have $\dist{r}(f_1|_\Delta,f_2|_\Delta) < \epsilon$. Here we let $\epsilon$ be a positive constant if $P$ is compact, and a positive function $P \to \R_+$ if $P$ is not compact. 
\end{definition}
We observe that the above distance on $\MapsPS(P,N)$ depends on our choice of metrics on each $J^r(\Delta,N)$ and therefore it also depends on $K$. It does however not depend on the choice of $K'$. The underlying topologies (known as respectively the weak and strong) do not depend on any of these choices.

Analogous to \cref{def:jiggling}, we can now also define an $\epsilon$-jiggling of a piecewise smooth map defined on a linear polyhedron.
\begin{definition}
    Consider a pair of maps $f_1, f_2 \in \MapsPS(P,N)$, each compatible with a triangulation $K_i$ of $P$, for $i=1,2$. Assume that the simplicial complex $K_2$ subdivides $K_1$, and that $\dist1(f_1,f_2) < \epsilon$. Then the map $(f_2,K_2)$ is an \textbf{$\epsilon$-jiggling} of $(f_1,K_1)$.
\end{definition}

\subsubsection{Jet spaces over linear polyhedra} \label{sec:JetPolyhedra}
We can also set up the language of jet spaces and differential relations from \cref{sec:JetHPrinc} for linear polyhedra of pure dimension. We fix a finite simplicial complex $K$ of pure dimension and consider piecewise smooth maps $|K| \rightarrow \R^n$. Having fixed $K$ we can consider the jet spaces $J^r(\Delta,\R^n)$ for each $\Delta \in K^{(\topd)}$. To keep notation light we will write $J^r(|K|,\R^n)$ for their union.

Similarly, in each $J^1(\Delta,\R^n)$ we can define a differential relation $\SR_\Delta$. We then write $\SR$ for their union. We refer to a relation $\SR$ as fiberwise dense and open, if each $\SR_\Delta$ is. Whenever we subdivide $K$ we can pullback these differential relations to the simplices in the subdivision.

\subsubsection{Ordered simplicial complexes}

Later on (\cref{sec:crysSubdiv}) we will consider the crystalline subdvision method for simplicial complexes; it will be defined first for the standard simplex and then be generalized to arbitrary simplicial complexes. In order to do so, we will need to make use of an ordering of the vertices. As such, we introduce:
\begin{definition}
An \textbf{ordered simplicial complex} is a pair consisting of a simplicial complex and a total order in its set of vertices.
\end{definition}
We observe that, in an ordered simplicial complex, each simplex can be uniquely parametrized by the standard simplex in a manner that is affine and order preserving (where the vertices of the standard simplex are ordered using the standard basis in $\R^m$). Hence to each ordered linear simplex we associate two linear maps. To formalize this, we first introduce some notation.

\begin{notation}\label{not:aspan}
	We denote the \textbf{affine span} of a subset $S \subset \R^n$ by $\aspan(S)$. When considering a $d$-simplex $\Delta$, the plane $\aspan(\Delta)$ is the unique affine $d$-plane containing $\Delta$. Similarly, we denote by $\lspan(S)$ the \textbf{linear span} of a subset $S \subset \R^n$.
\end{notation}

\begin{definition} \label{def:SimplexToLinMap}
	A linear, ordered $m$-simplex $\Delta = \langle v_0,\dots,v_m \rangle$ in $\R^N$ defines the unique linear map $T_\Delta: \R^m \to \R^N$ sending the $i$th basis element $e_i$ to $v_i-v_0$. Translating $\Delta$ by $-v_0$ provides us with the set $\{-v_0\} + \Delta$, and by considering its linear span we obtain the linear map $T_\Delta^{-1} : \lspan(\{-v_0\} + \Delta )\to\R^m$.
\end{definition}

\subsubsection{Colored simplicial complexes}

Our main construction will involve arguing over multiple simplices at once, but to do so we need them not to interact. This is possible (\cref{ssec:perturbing}) as long as they are far apart, which we formalize by requiring that their stars are disjoint. To keep track of this we will assign a ``color'' to each simplex so that simplices in a star never have the same color.

For the following definition we use the notation $[C]$ with $C\in\N$ to denote the ordered set $\{0,\dots,C\}$.
\begin{definition} \label{def:coloring}
Let $K$ be a simplicial complex of pure dimension. A \textbf{$C$-coloring} of $K$ is a surjective map 
\[ c: K^{(\topd)} \to [C]\]
such that $c(\Delta) \neq c(\Delta')$ whenever $\operatorname{star}(\Delta) \cap \operatorname{star}(\Delta') \neq \emptyset$.
\end{definition}
We refer to $C$ as the \textbf{size} of the coloring $c$. We write $K^{(\topd,i)} \subset K^{(\topd)}$ for the top simplices of $K$ of color $i$.

\subsection{Triangulations} \label{sec:triangul}

We now recall the notion of a triangulation of a manifold; i.e. a decomposition of the manifold into smooth simplices. This combinatorial description of the manifold is particularly useful for local arguments and, as we shall see, will turn our arguments into arguments about linear polyhedra. In this section we fix our notation and conventions on the topic. For a more detailed account of triangulations we refer to \cite[Ch. IV.B]{whitney2012geometric} and \cite{lurie2009topics}.

\subsubsection{Polyhedra}

Instead of defining triangulations directly, we recall the more general notion:
\begin{definition}
A \textbf{polyhedron} $T: |K| \rightarrow M$ consists of a simplicial complex $K$ and a family of smooth embeddings $(T_\Delta : \Delta \to M)_{\Delta \in K}$ that glue to a homeomorphism $T: |K| \to T(|K|)$.
\end{definition}
Equivalently, a polyhedron is an embedding of a linear polyhedron $|K|$ into $M$ that is piecewise smooth with respect to $K$. A subset $Q \subset M$ is a \textbf{subpolyhedron} of $T$ if there exists a subcomplex $K''$ of a subdivision $K'$ of $K$ such that $T|_{|K''|}:|K''| \rightarrow Q$ is a homeomorphism.

Given a polyhedron $T: |K| \to M$ we define a \textbf{simplex} $\Delta'$ of $T$ as the image of a linear simplex $\Delta \in K$ under $T$. With a slight abuse of notation we will write $\Delta' \in T$. Keeping $K$ in mind, we still speak of faces, stars and rings.

\begin{definition}
A \textbf{triangulation} of a manifold $M$ is a polyhedron $T: |K| \rightarrow M$ with $T(|K|) = M$.
\end{definition}
It is a result of Whitehead \cite{Whitehead1940} that any smooth manifold can be triangulated, which is unique up to a piecewise linear homeomorphism. The name ``Whitehead triangulation'' is sometimes used to emphasize the compatibility between the triangulation and the smooth structure of $M$.

\subsubsection{Piecewise smooth sections}

Given a bundle over a triangulated manifold, we consider:
\begin{definition}
A section $s: M \to E$ of a fiber bundle $E$ is \textbf{piecewise smooth} if there exists a triangulation $T: |K| \to M$ such that $s \circ T$ is piecewise smooth.
\end{definition}
We denote the space of such sections by $\GammaPS(E)$. As before, if we want to emphasize the role of $T$, we may say that $(s,T)$ is a piecewise smooth section.

To define the $C^r$-topology on $\GammaPS(E)$, we again need to fix a metric on the jet space $J^r(E)$. Compared to the setting of maps over linear polyhedra from \cref{def:CrTopPolyhedron}, we however do not need to fix a triangulation to make sense of jet bundles over $M$. Then we obtain:
\begin{definition}
Given a pair of sections $(s_1,T_1),(s_2,T_2) \in \GammaPS(E)$ we write $\dist{r}(s_1,s_2) < \epsilon$ if there exists a common subdivision $T : |K| \to M$ of $T_1$ and $T_2$ such that for every simplex $\Delta \in T$ we have $\dist{r}(s_1|_\Delta,s_2|_\Delta) < \epsilon$. Here we let $\epsilon$ be a positive constant if $P$ is compact, and a positive function $P \to \R_+$ if $P$ is not compact. 
\end{definition}
We point out that, even though the distances defined above will generally differ from those defined in \cref{def:CrTopPolyhedron}, the induced topologies on $\GammaPS(E)$ do coincide with those in \cref{def:CrTopPolyhedron}.

\subsection{Simplicial sets} \label{sec:simplSets}

In this section we briefly recall the necessary theory of simplicial sets. We emphasize that this is a very short overview which does not give a general account of the theory, but only covers the minimum to reach what we need. In particular, we work our way up to the specific characterization of weak homotopy equivalences of simplicial sets appearing in \cref{def:wheTriangleHomotopy}. This section is mostly based on \cite[Ch. 2]{heuts2022simplicial}, to which we also refer for more details. 

We start by defining the simplex category $\SDelta$ which has objects $[n] = \{0,1,\dots,n\}$. Using the natural ordering $0 \leq 1 \leq \dots \leq n$ we define morphisms in $\SDelta$ as non-decreasing functions. These are generated by the face maps $\delta^i : [n-1] \to [n]$ and the degeneracy maps $\sigma^i : [n+1] \to [n]$, which are defined by either skipping or doubling the element $i$, respectively. This allows us to define:
\begin{definition}
A \textbf{simplicial set} is a functor $X: \SDelta^{\op} \to \Set$.
\end{definition}
Unpacking the definition we see that a simplicial set $X$ is completely determined by the sets $X_n \coloneq X([n])$, the face maps $\delta_i \coloneq X(\delta^i): X_n \to X_{n-1}$ and the degeneracy maps $\sigma_i \coloneq X(\sigma^i): X_n\to X_{n+1}$.

To every topological space we can associate a simplicial set by considering its singular complex. Simplicial sets still allow us to talk about weak homotopy type, but are often easier to work with than topological spaces, due to their combinatorial nature.
\begin{definition}
The functor $\Sing : \Top \to \sSet$ is defined by $\Sing(M)_n = \Hom_\Top(\Delta^n,M)$ for all $ M \in \Top$. We refer to $\Sing(M)$ as the \textbf{singular complex} of $M$.
\end{definition}

The geometric realization $|\cdot | : \sSet \to \Top$ functor is also relevant to our discussion. We do not want to define it precisely, but we do want to point out that it can be seen as the inverse of the singular complex in two ways. Firstly, it holds that there exists a bijection
\[\Hom_\Top(|X|,M) \to \Hom_\sSet(X, \Sing(M))\]
which is natural in $X \in \sSet$ and $M \in \Top$. This can be phrased by saying that the singular complex is the right adjoint to the geometric realization. Secondly, we have the following statement:
\begin{lemma} \label{lem:wheSing}
	Let $M \in \Top$, then there exists a natural map $|\Sing(M)| \to M$ which is a weak homotopy equivalence. We denote this as $M \simeq |\Sing(M)|$. 
\end{lemma}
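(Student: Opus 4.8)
The statement to prove is Lemma \ref{lem:wheSing}: for $M \in \Top$, there is a natural map $|\Sing(M)| \to M$ that is a weak homotopy equivalence.

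The plan is to exhibit the map as the counit of the (geometric realization $\dashv$ singular complex) adjunction and then verify it induces isomorphisms on all homotopy groups.

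\medskip

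\textbf{Construction of the map.} First I would recall that the adjunction bijection
\[
\Hom_\Top(|X|,M) \xrightarrow{\ \cong\ } \Hom_\sSet(X,\Sing(M))
\]
stated in the excerpt, being natural in both variables, is the data of an adjunction $|\cdot| \dashv \Sing$. Its counit at $M$ is the map $\varepsilon_M : |\Sing(M)| \to M$ corresponding to $\id_{\Sing(M)} \in \Hom_\sSet(\Sing(M),\Sing(M))$ under the inverse bijection. Naturality of the adjunction bijection in $M$ gives that $\varepsilon$ is a natural transformation $|\Sing(-)| \Rightarrow \id_\Top$, so the ``natural map'' claimed in the lemma is exactly $\varepsilon_M$. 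Concretely, on a point of $|\Sing(M)|$ represented by a pair $(\phi, t)$ with $\phi : \Delta^n \to M$ a singular simplex and $t \in \Delta^n$, one has $\varepsilon_M(\phi,t) = \phi(t)$; this is the description I would use for the homotopy-group computations.

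\medskip

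\textbf{Weak equivalence.} Since both spaces have the homotopy type of CW complexes (the realization of a simplicial set is a CW complex, and we may compare against a chosen basepoint), it suffices by Whitehead's theorem to show $\varepsilon_M$ induces a bijection on $\pi_0$ and an isomorphism on $\pi_n$ for every $n \geq 1$ and every basepoint. The key computational input is the identification of $\pi_n$ of a space with the simplicial homotopy groups of its singular complex, which in turn follows from the standard fact that $\Sing(M)$ is a Kan complex (horns in $\Sing(M)$ can be filled because the inclusion $\Lambda^n_k \hookrightarrow \Delta^n$ is a retract of topological spaces, so any map $\Lambda^n_k \to M$ extends over $\Delta^n$). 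Granting the simplicial-homotopy-group description, surjectivity of $\varepsilon_{M*}$ on $\pi_n$ is immediate: a class in $\pi_n(M)$ is represented by a map $\Delta^n/\partial\Delta^n \to M$, i.e. a singular $n$-simplex with constant boundary, which is literally a simplicial $n$-sphere in $\Sing(M)$ mapping to it under $\varepsilon_M$. Injectivity is the analogous statement applied one dimension up: a nullhomotopy in $M$ of such a representative is a singular $(n+1)$-simplex exhibiting the corresponding simplicial class as trivial. The $\pi_0$ statement is the same argument with $n=0$: path components of $M$ correspond to $0$-simplices of $\Sing(M)$ modulo the relation generated by $1$-simplices, which is exactly $\pi_0 |\Sing(M)|$.

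\medskip

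\textbf{Main obstacle.} The genuinely nontrivial ingredient is the comparison between the \emph{topological} homotopy groups $\pi_n(|\Sing(M)|)$ — defined via maps of $S^n$ into the realization — and the \emph{combinatorial} simplicial homotopy groups $\pi_n^{\mathrm{simp}}(\Sing(M))$ computed directly from the simplicial set. This relies on the fact that $\Sing(M)$ is a Kan complex together with the theorem that for a Kan complex $X$ the natural map $\pi_n^{\mathrm{simp}}(X) \to \pi_n(|X|)$ is an isomorphism; proving this from scratch requires the simplicial approximation / anodyne extension machinery and is the part I would cite from \cite[Ch. 2]{heuts2022simplicial} rather than reprove. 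Everything else — the construction of $\varepsilon_M$, its naturality, the adjoint description of its effect on points, and the surjectivity/injectivity bookkeeping on homotopy groups — is formal once that comparison is in hand.
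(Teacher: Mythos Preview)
The paper does not actually prove this lemma: it is stated in the preliminaries as a classical fact, with the surrounding discussion pointing to \cite[Ch.~2]{heuts2022simplicial} for details. Your outline is the standard argument one finds there (counit of the adjunction, Kan property of $\Sing(M)$, and the comparison of simplicial and topological homotopy groups for Kan complexes), so there is nothing to compare against.

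One small correction: you invoke Whitehead's theorem to reduce to checking isomorphisms on $\pi_n$, but this is unnecessary and slightly off. A weak homotopy equivalence is \emph{by definition} a map inducing isomorphisms on all $\pi_n$ for all basepoints; Whitehead's theorem would only be needed to upgrade this to a genuine homotopy equivalence, which the lemma does not claim (and which would fail for arbitrary $M \in \Top$, since $M$ need not have the homotopy type of a CW complex). Simply delete the Whitehead clause and the argument stands.
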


\subsubsection{Weak homotopy equivalences}

In the category of topological spaces a weak homotopy equivalence can be defined as the vanishing of the relative homotopy groups. We can use a similar definition for (a subcategory of) simplicial sets, but for this we need simplicial analogues of the disc and its boundary. These roles are played by the standard $n$-simplex and its boundary. We use the notation $\Delta[n]$ for the standard $n$-simplex as a simplicial set, and the notation $\Delta^n$ as a topological space. 
\begin{definition}
The \textbf{standard $n$-simplex} $\Delta[n]$ is the simplicial set
\[ \Delta[n]  = \Hom_{\SDelta}( \cdot, [n]). \]
Its \textbf{boundary} $\partial \Delta[n]$ is defined by
\[	(\partial \Delta[n])_m = \{ f \in \Hom_{\SDelta}( [m], [n]) \mid f \text{ is not surjective} \}. \]
\end{definition}
We have two observations to make concerning the above definition. First, note that $\Delta[n]$ and $\Sing(\Delta^n)$ are not the same. One could show for instance that the latter is a Kan complex, which we introduce below, whereas the former is not. The geometric realization of $\Delta[n]$ is however the standard simplex $\Delta^n$, which explains the notation. Secondly, we note that the Yoneda lemma gives us a bijection $\Hom_\sSet(\Delta[n], X) \to X_n$ for all $X \in \sSet$. This provides a useful interpretation of morphisms with domain $\Delta[n]$ and clarifies its role in the theory.

We can now make sense of homotopies in the category of simplicial sets (which strongly resembles the definition for topological spaces). In the following definition the product of two simplicial sets is just their product as functors. That is, $(X \times Y )_n = X_n \times Y_n$ for any given $X,Y \in \sSet$.
\begin{definition}
	Two morphisms $f,g: X \to Y$ of simplicial sets are \textbf{homotopic} if there exists a map $\eta: X \times \Delta[1] \to Y$ such that the diagram 
	\[ \begin{tikzcd}
		X \arrow[r,"\simeq"] \arrow[drr,"f"] & X \times \Delta[0] \arrow[r,"\id \times \delta^1"]  & X \times \Delta[1] \arrow[d,"\eta"] & X \times \Delta[0] \arrow[l,"\id \times \delta^0"']  & \arrow[l,"\simeq"'] \arrow[dll,"g"'] X \\%
		& & Y & & 
	\end{tikzcd}
	\]
	commutes.
\end{definition}

To define weak homotopy equivalences for simplicial sets, we restrict ourselves here to Kan complexes, which are simplicial sets that allow for so-called horn-fillings. As a space, we define the $k$-horn $\Lambda^n_k$ of $\Delta^n$ as the subcomplex $\overline{(\partial \Delta^n \setminus F_k)}$ of the standard $n$-simplex $\Delta$ where $F_k$ is the face of $\Delta^n$ excluding the $k$th vertex. It is the geometric realization of its counterpart in simplicial sets, which we denote by $\Lambda[n,k]$.
\begin{definition}
    The \textbf{$k$-horn of the standard $n$-simplex} $\Lambda[n,k]$ is defined as a simplicial set by
	\[
	    (\Lambda[n,k])_i  = \{ f \in \Hom_\Delta( [i], [n]) \mid \Image f \cup \{k\} \neq [n] \}.
	\]
\end{definition}

An example of a Kan complex to keep in mind is the singular complex of a topological space. 
\begin{definition}
    A simplicial set $X$ is a \textbf{Kan complex} if for every $n \geq 1$ and $k \in [n]$ any morphism $\Lambda[n,k] \to X$ can be extended to a morphism $\Delta[n] \to X$.
\end{definition}

Restricting ourselves to Kan complexes allows us to define weak homotopy equivalences for simplicial sets, in a way that resembles the case for topological spaces. The definition stated below is not the standard one but equivalent to it in the case of Kan complexes~\cite{jardine1987simplical,dugger2004weak}. It generalizes the notion for spaces via the singular complex and the geometric realization functor.
\begin{definition} \label{def:wheTriangleHomotopy}
	A morphism $f: X \to Y$ of Kan complexes is a \textbf{weak homotopy equivalence} if and only if for all $n \in \N$ and all commutative diagrams
	\[ \begin{tikzcd}
		\partial \Delta[n] \arrow{r} \arrow{d} & X \arrow["f"]{d} \\%
		\Delta[n] \arrow{r} \arrow[dashrightarrow, "\sigma"]{ur} & Y,
	\end{tikzcd}
	\]
	where $\partial \Delta[n] \to \Delta[n]$ is the standard inclusion, there exists a lift $\sigma: \Delta[n] \to X$ such that the upper triangle commutes and the lower triangle commutes up to homotopy constant along $\partial \Delta[n] \hookrightarrow \Delta[n]$.
\end{definition}

    \section{Subdivisions} \label{sec:crysSubdiv}

Various methods exist to subdivide a triangulation into smaller simplices, each with their own properties. We focus on a method called crystalline subdivision (\cref{ssec:crysSubdiv}), whose main benefit is that simplices do not get too distorted when subdividing (\cref{sec:propCrystalline}).

We then discuss how one covers a polyhedron with nice subpolyhedra; we need this to ultimately establish (in Section \ref{sec:jigglingEuclRel}) a version of jiggling that is relative in the domain. The ideas needed for the case of compact polyhedra are explained in \cref{sec:coveringPolyhedra}; they are based on the use of crystalline subdivision.

In the non-compact case one needs to introduce new subdivision schemes, localized over subpolyhedra. The scheme required for our main result (\cref{th:jigglingMfd}) can be implemented rather naively. We call it ``barycentric cone off'' and we explain it in \cref{sec:coningOff}. For other applications (e.g. general position) we need a scheme with better quantitative control over the shape of our simplices. We call it ``generalized crystalline subdivision'' and we describe it in \cref{sec:crystallineConingOff}.

\subsection{Crystalline subdivision} \label{ssec:crysSubdiv}

There are various ways of defining crystalline subdivision, but here we follow the definition of Thurston from \cite[p. 227]{Th2}.

Crystalline subdivision as described below is based on the observation that we know how to subdivide a cube into smaller cubes. We illustrate the procedure in \cref{fig:crystsubdivT}.

\begin{figure}[h]
	\includegraphics[width=0.4\textwidth,page=4]{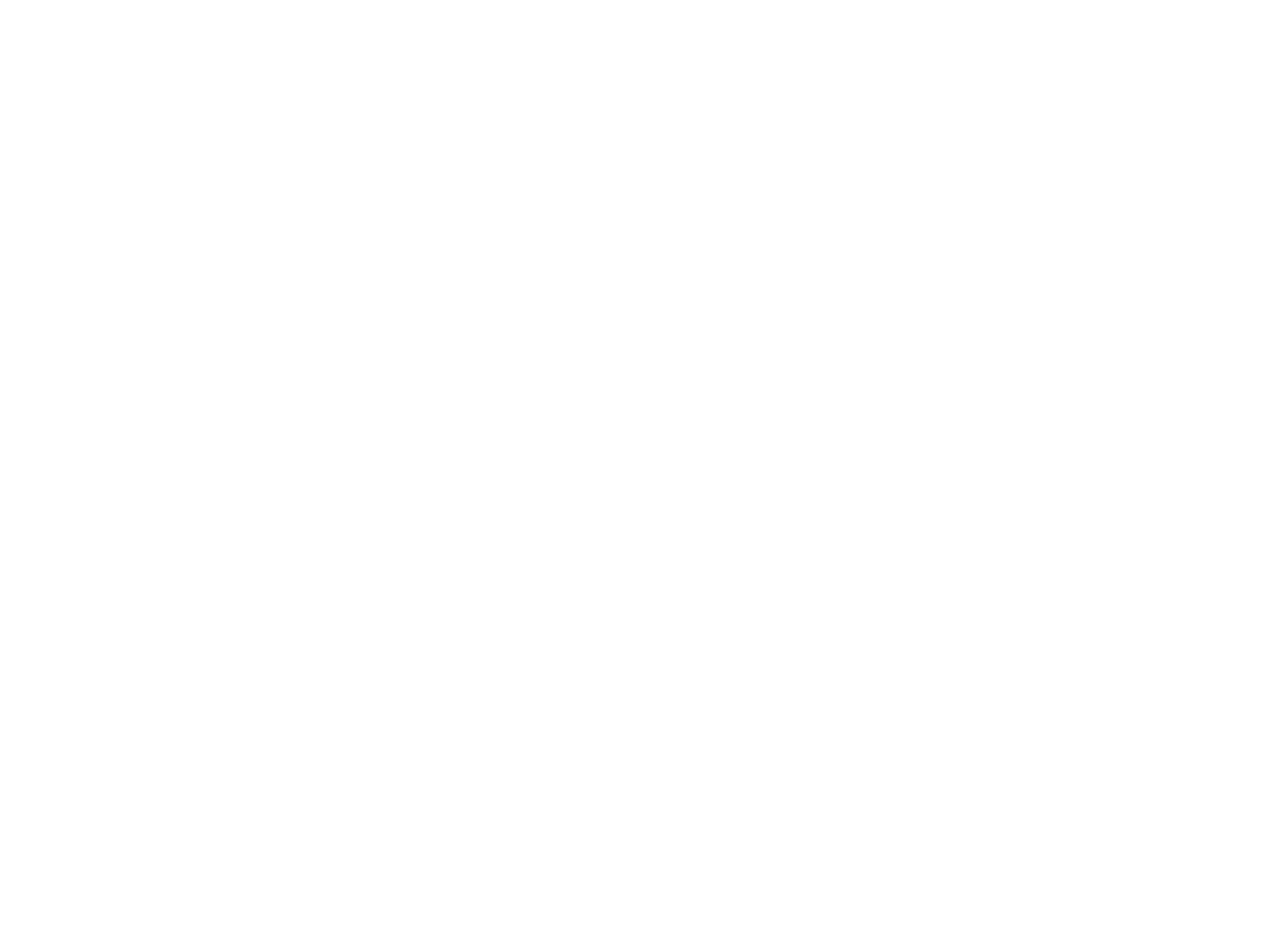}
	\centering
	\caption{The second crystalline subdivision of a $2$-simplex $\Delta$. } \label{fig:crystsubdivT}
\end{figure}

\begin{definition} \label{def:crystsubdivT}
Applying \textbf{crystalline subdivision} to an (ordered) linear $m$-simplex $\Delta$ with $m>0$ is done using the following steps:
	\begin{enumerate}
		\item \label{item:simplexToCube} Include $\Delta =\langle v_{i_0},\dots,v_{i_m} \rangle$ into the standard $m$-cube $I^m$ via the piecewise linear map $\iota$ defined by 
        \[\iota(v_{i_j}) = ({\overbrace{0,\dots,0}^{j }} ,{\overbrace{1,\dots,1}^{m-j }}).\]
		\item \label{item:subdivcube} Subdivide $I^m$ into $2^m$ smaller $m$-cubes of size $1/2$.
		\item Subdivide each of the smaller $m$-cubes into $m!$ smaller linear $m$-simplices. Each linear simplex corresponds to a permutation $\pi$ of $\{0,1,\dots,m\}$ by identifying a permutation $\pi$ with the subset $\{(x_1,\dots, x_m) \in \R^m \mid 0\leq x_{\pi(0)} \leq x_{\pi(1)} \leq \dots \leq x_{\pi(m)} \leq 1\}$. 
        \item Use $\iota$ to pullback the above subdivision of $\iota(\Delta)$ to $\Delta$.
	\end{enumerate}
We define the crystalline subdivision of $\Delta^0$ as itself.
\end{definition}

This procedure makes use of the ordering of the vertices in step~\ref{item:simplexToCube}. Indeed, for $m\geq3$, the resulting subdivision would be different if we applied an affine isomorphism that permutes the vertices of the simplex $\Delta$. It is possible to assign an ordering to the result of crystalline subdivision of $\Delta$, but as we have no use for this, we do not. 

This procedure can be iterated but it turns out to be easier to define finer crystalline subdivisions directly. Hence we define the \textbf{$\ell$th crystalline subdivision} of the standard $m$-simplex $\Delta^m$ by instead subdividing the $m$-cube into $2^{\ell m}$ smaller $m$-cubes of size $2^{-\ell}$ in \cref{item:subdivcube} from \cref{def:crystsubdivT}.

To generalize to a simplicial complex $K$, we choose an ordering on $K$ such that the resulting subdivision is well-defined.
\begin{definition}
Let $K$ be an ordered simplicial complex. Its \textbf{$\ell$th crystalline subdivision} $K_\ell$ is defined as the union of the $\ell$th crystalline subdivisions of its simplices.
\end{definition}
Given a polyhedron $T: |K| \rightarrow M$ associated to a specified ordered simplicial complex $K$, we can also speak of its $\ell$th crystalline subdivision $T_\ell$, seen as the collection of embeddings $(T_\Delta : \Delta \to M)_{\Delta \in K_\ell}$ given by restricting $T$.

\subsection{Properties of crystalline subdivision} \label{sec:propCrystalline}

We discuss some of the properties of crystalline subdivision next. They all have the same flavor: since crystalline does not distort simplices, we are able to obtain various quantitative bounds independent of the order of subdivision $\ell$.

\subsubsection{Coloring}

The first advantage is that we are able to bound the maximum number of colors we need to color a simplicial complex and all its crystalline subdivisions.
\begin{lemma} \label{lem:coloringCrystalline}
Let $K$ be a finite and ordered simplicial complex. Then, there exists a $C\in \N$ such that every crystalline subdivision $K_\ell$ admits a coloring of size $C$.
\end{lemma}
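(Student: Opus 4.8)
The plan is to reduce the coloring problem for all crystalline subdivisions simultaneously to a purely local, bounded-degree graph coloring statement. Recall that a $C$-coloring of a pure-dimensional simplicial complex is a surjection $c:K^{(\topd)}\to[C]$ assigning distinct colors to any two top simplices whose stars intersect. Equivalently, we form the \emph{adjacency graph} $G(K)$ whose vertices are the top-dimensional simplices and whose edges connect two top simplices $\Delta,\Delta'$ with $\str(\Delta)\cap\str(\Delta')\neq\emptyset$; a coloring of size $C$ is then a proper vertex coloring of $G(K)$ using the palette $[C]$ (the surjectivity is a harmless normalization once $|[C]|$ does not exceed the number of vertices, or one simply allows $C$ to be the maximal color actually used). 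By the greedy bound, $G(K_\ell)$ admits a proper coloring with $\Delta(G(K_\ell))+1$ colors, where $\Delta(\,\cdot\,)$ denotes maximal degree. So it suffices to bound $\Delta(G(K_\ell))$ by a constant independent of $\ell$.

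First I would establish that this maximal degree is controlled \emph{locally}: the number of top simplices $\Delta'$ of $K_\ell$ with $\str(\Delta')\cap\str(\Delta)\neq\emptyset$, for a fixed top simplex $\Delta$ of $K_\ell$, is at most the number of top simplices of $K_\ell$ contained in the second star $\str^2(\Delta)$ (two top simplices whose stars meet a common simplex both lie within two adjacency steps, hence within $\str^2$ of each other). Because crystalline subdivision is performed simplex-by-simplex and is compatible along faces, a top simplex $\Delta$ of $K_\ell$ sits inside some top simplex $\tau$ of the original complex $K$; then $\str^2(\Delta)$ is contained in the union of the $K_\ell$-simplices lying in those original simplices of $K$ that are within two adjacency steps of $\tau$. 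So the count splits as: (number of original simplices near $\tau$) $\times$ (number of $\ell$th-crystalline top simplices inside one original $m$-simplex that can be relevant).

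The key point is then that both factors are bounded independently of $\ell$. The first factor, the size of $\str_K^2(\tau)$, is bounded because $K$ is finite; this is where finiteness of $K$ is used, and it gives a single global constant. The second factor requires the self-similarity of crystalline subdivision: the $\ell$th crystalline subdivision of an $m$-simplex is an affinely-rescaled copy (by $2^{-\ell}$, after the cube identification $\iota$ of \cref{def:crystsubdivT}) of a fixed combinatorial pattern, so near any one sub-simplex the local combinatorial picture is the same for every $\ell$ — concretely, a top sub-simplex $\Delta$ comes from a unit cell, and only sub-simplices in the $O(1)$ neighboring cells (together with their counterparts in finitely many adjacent original simplices meeting along faces) can have intersecting second stars. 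One makes this precise by noting $\str^2(\Delta)$, measured in the sub-lattice, has bounded diameter (say $\le 2$ cells), hence meets a bounded number $N_m$ of sub-cells, each contributing $m!$ sub-simplices; so the second factor is at most $N_m\cdot m!$, depending only on $m\le\dim K$. Multiplying, $\Delta(G(K_\ell))\le |\str_K^2| \cdot N_{\dim K}\cdot (\dim K)! =: C-1$, uniformly in $\ell$, and the greedy algorithm produces the desired coloring of size $\le C$; one then notes that we may always pad or relabel to make $c$ surjective onto its image, taking $C$ to be that image's size.

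The main obstacle I anticipate is the bookkeeping at the boundary between adjacent original simplices: a top sub-simplex $\Delta$ of $K_\ell$ near a face of its ambient original simplex $\tau$ has a second star that genuinely reaches into the crystalline subdivisions of the other original simplices sharing that face, and one must check that the cube-based identification $\iota$ is set up compatibly enough (it is, because it respects the vertex ordering and hence faces) that these pieces still only contribute $O(1)$ sub-simplices each. This is a finite-combinatorics verification rather than a conceptual difficulty: once one observes that crystalline subdivision restricted to a face of $\Delta^m$ agrees with crystalline subdivision of that face (which follows from \cref{def:crystsubdivT} since setting a coordinate to its extreme value commutes with the cube subdivision), the counts across faces glue without blow-up, and the uniform bound follows.
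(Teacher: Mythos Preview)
Your proposal is correct and follows essentially the same approach as the paper: bound the degree of the adjacency graph uniformly in $\ell$ via the cube structure of crystalline subdivision (each small cube meets $O(3^m)$ neighboring cubes, each split into $m!$ simplices), absorb the cross-simplex contributions using finiteness of $K$, and then color greedily. The paper's version is terser and uses the total count $A=|K^{(\topd)}|$ as the global factor rather than your $|\str_K^2|$; you are slightly more careful in distinguishing the condition $\str(\Delta)\cap\str(\Delta')\neq\emptyset$ from mere adjacency (hence your passage through $\str^2$), which is a refinement the paper glosses over but which does not change the argument.
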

\begin{proof}
Consider the standard $m$-cube $I^m$ subdivided into smaller $m$-cubes of length $2^{-\ell}$ for some $\ell \in \N$. Let $Q \subset I^m$ be one of those smaller cubes. Then $Q$ is adjacent to at most $3^m-1$ cubes. In the procedure of crystalline subdivision each of these cubes is subdivided into $m!$ smaller $m$-simplices. If $A$ is the number of simplices in $K^{(\topd)}$, then each simplex $\Delta \in K^{(\topd)}_\ell$ is adjacent to at most $(3^m-1) \cdot (m)! \cdot  A$ simplices, which means that there exists a $((3^m-1) \cdot (m)! \cdot  A)$-coloring of $K_\ell$.
\end{proof}

\subsubsection{Model simplices} \label{sec:modelSimpl}

A second property of crystalline subdivision is that each of the simplices in the $\ell$th crystalline subdivision of a simplex $\Delta$ is equivalent to a simplex in the first subdivision of $\Delta$, up to scaling and translation. Model simplices appear already in Thurston's work~\cite{Th2}, albeit in a different formulation. 

\begin{lemma} \label{lem:csubdivExact}
Let $K$ be a finite, ordered simplicial complex of pure dimension $m$. Then, there exists a finite collection of \textbf{model simplices} $\SC = \{\Delta_i \subset \R^N \mid i = 0,\dots, I\}$ with the following property: for any $\ell \in \N$ and $\Delta \in K_\ell^{(\topd)}$ there exists 
	\begin{itemize}
		\item a model simplex $\Delta_i \in \SC$, and
		\item a map $t : \R^N \to \R^N$ which is a composition of a translation and a scaling by $2^\ell$
	\end{itemize}
such that $\Delta_i = t(\Delta)$.
\end{lemma}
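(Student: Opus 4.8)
\textbf{Proof proposal for \Cref{lem:csubdivExact}.}

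The plan is to reduce the statement to the analogous claim for the standard simplex, where crystalline subdivision is defined explicitly in terms of the cube $I^m$, and then carry the finitely many models of the standard simplex across to the finitely many simplices of $K$. First I would fix, for the standard $m$-simplex $\Delta^m$, the collection of $m$-simplices appearing in its \emph{first} crystalline subdivision; call these $\SC_0$. This is a fixed finite set determined by the subdivision of $I^m$ into $2^m$ subcubes of side $1/2$, each cut into $m!$ order simplices, pulled back via $\iota$. I would then prove the key combinatorial fact: every $m$-simplex in the $\ell$th crystalline subdivision of $\Delta^m$ is, up to a translation composed with scaling by $2^\ell$, equal to some simplex of $\SC_0$. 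The reason is that the $\ell$th subdivision of $I^m$ into cubes of side $2^{-\ell}$ is, cube by cube, a scaled-and-translated copy of the side-$1/2$ subdivision of $I^m$ (scale by $2^{1-\ell}$, then translate), and within each subcube the order simplices are cut in exactly the same combinatorial pattern; composing with the (affine) pullback map $\iota^{-1}$, which is the same for all $\ell$, transports this to $\Delta^m$. Hence $\SC_0$ works as the model set for $\Delta^m$, and the required map $t$ — ``translate and scale by $2^\ell$'' — is precisely the inverse of the scale-and-translate identification above, with the $\iota$-conjugation being an ambient affine map independent of $\ell$.

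Next I would globalize. For each top simplex $\Delta' \in K^{(\topd)}$, fix the affine parametrization $T_{\Delta'}: \Delta^m \to \Delta'$ coming from the ordering of $K$ (as in \Cref{def:SimplexToLinMap}), and set $\SC_{\Delta'} = \{ T_{\Delta'}(\Delta_i) \mid \Delta_i \in \SC_0 \}$. Since $K$ is finite of pure dimension $m$, the union $\SC = \bigcup_{\Delta' \in K^{(\topd)}} \SC_{\Delta'}$ is finite. Now given $\ell$ and $\Delta \in K_\ell^{(\topd)}$, the simplex $\Delta$ lies in the $\ell$th crystalline subdivision of some unique top simplex $\Delta' \in K^{(\topd)}$; pulling $\Delta$ back via $T_{\Delta'}$ gives a simplex in the $\ell$th crystalline subdivision of $\Delta^m$, to which the previous paragraph applies, producing a model $\Delta_i \in \SC_0$ and a translate-and-scale-by-$2^\ell$ map. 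Pushing forward by $T_{\Delta'}$ exhibits $\Delta$ as $t^{-1}$ of the model $T_{\Delta'}(\Delta_i) \in \SC$ for a suitable map $t$; the only point needing care is that conjugating ``translation composed with scaling by $2^\ell$'' by the affine map $T_{\Delta'}$ still yields a map of that form, which it does because $T_{\Delta'}$ is affine and an affine conjugate of a homothety of ratio $2^\ell$ is again a homothety of ratio $2^\ell$ composed with a translation. (A small notational nuisance is that the lemma is stated as $\Delta_i = t(\Delta)$ rather than $\Delta = t(\Delta_i)$; since the maps involved are invertible homotheties this is immaterial, but I would simply choose the direction that matches the statement.)

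The main obstacle is not conceptual but bookkeeping: making precise that crystalline subdivision is genuinely ``self-similar'' across scales, i.e. that the $\ell$th subdivision restricted to one side-$2^{-\ell+1}$ portion is an exact rescaled copy of the first subdivision, \emph{including} the choice of order simplices inside each subcube and \emph{including} the behaviour of the pullback map $\iota$ (which is only piecewise linear on $\Delta^m$, but is affine on each simplex of the first subdivision, and this is all that is used). One must check that the map $\iota$ used in \Cref{def:crystsubdivT}, and hence the identification of $\Delta^m$-simplices with $I^m$-simplices, does not depend on $\ell$ — it does not, by construction — so that the rescaling happening purely inside $I^m$ transports cleanly. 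Once this self-similarity is spelled out, the rest is a finite union and an affine change of coordinates, and the independence of the model set from $\ell$ falls out immediately. I would keep the write-up short, emphasizing the cube picture and relegating the ``affine conjugate of a homothety is a homothety'' remark to a parenthetical.
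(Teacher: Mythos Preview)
Your proposal is correct and follows essentially the same approach as the paper: the model set $\SC$ is taken to be the union, over all top simplices $\Delta'$ of $K$, of the simplices of the first crystalline subdivision of $\Delta'$, and the self-similarity of the cube subdivision does the rest. The paper's proof simply asserts this self-similarity (``by construction''), whereas you spell it out via the standard simplex and the cube picture, but the content is the same. One small simplification: the map $\iota$ in \cref{def:crystsubdivT} is determined by its values on the $m+1$ vertices of a simplex, so it is genuinely affine on $\Delta$, not merely piecewise linear; your caveat about $\iota$ being only PL on $\Delta^m$ is therefore unnecessary, and the conjugation argument goes through cleanly.
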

\begin{proof}
    Let $\Delta \in K^{(\topd)}$ be a top-dimensional simplex, then we define $\SC'_\Delta$ to be the first crystalline subdivision of $\Delta$. We note that any simplex in $(\Delta)_\ell$ is by construction a translated and scaled copy of a unique simplex in $(\Delta)_1$. We define $\SC$ to be the union over all $\SC_\Delta$ for $\Delta \in K^{(\topd)}$.
\end{proof}

\subsubsection{Shape of simplices} \label{sec:shapeSimplices}

In this section we introduce three quantities related to a simplex, which tell us about the size and shape of a simplex. 

To start with, given a simplex, we are interested in the maximal ($\rmax$) and minimal ($\rmin$) distances between a vertex and the face opposite to it. The former quantity controls for instance how well a map is approximated by its linearization with respect to a given triangulation (see \cref{sec:linearizing}). Their ratio on the other hand tells us how degenerate the simplex is. 

For $\rmax$ we note that the maximum distance between a vertex and opposite face agrees with the maximum length of the edges adjacent to both the vertex and the opposite face. For $\rmin$ we interpret the distance between vertex and opposite face as the distance between the vertex and the affine plane spanned by the face. Below we define $\rmax$ and $\rmin$ for $(m+1)$-tuples of points in $\R^N$, but we can also speak of $\rmax$ and $\rmin$ of a linear simplex $\Delta$ by identifying $\Delta$ with its vertices.

\begin{definition} \label{def:rminmax}
	The functions \[\rmin, \rmax : \R^N \times \dots \times \R^N = \R^{N (m+1) } \to \R\] are defined by
	\begin{align*}
		\rmin(p_0,\dots, p_m) &= \min_{i \in [m]} d \left(p_i, \aspan \left( \langle p_0, \dots, \hat{p_i},\dots,p_m \rangle \right) \right) \text{ \quad and} \\
		\rmax(p_0,\dots, p_m) &= \max_{i,j \in [m]} d(p_i,p_j) 
	\end{align*}
\end{definition}

We recall from \cref{def:SimplexToLinMap} that each $m$-simplex $\Delta$ in $\R^N$ defines a unique linear map $T_\Delta: \R^m \to \R^N$, along with its inverse $T_\Delta^{-1}$. Therefore, we can also study the shape of $\Delta$ by studying the operators $T_\Delta$ and $T_\Delta^{-1}$, and in particular their norm. We bound in \cref{lem:rminmaxOperator} the norm of $T_\Delta$ using $\rmax$, but for $\| T_\Delta^{-1} \|$ we introduce an additional quantity $\maxcoeff$. 
As in the case of $\rmax$ and $\rmin$, we define $\maxcoeff$ for $(m+1)$-tuples of points in $\R^N$. The difference is however that here we need to assume that the simplex $\Delta$ is ordered to be able to speak of $\maxcoeff(\Delta)$, since $\maxcoeff$ is only invariant under permutations of its input fixing the first element. 
\begin{definition}\label{def:maxcoeff}
	The function \[\Lambda : \left\{(v_0,\dots,v_m) \in \R^N \times \dots \times \R^N = \R^{N (m+1) } \mid v_i \neq v_j \text{ for all } i \neq j \right\}  \to \R\] is defined by
	\[ \Lambda(v_0,\dots,v_m) = \max_{\substack{  \lambda_1,\dots,\lambda_m \in \R \\ |\sum \lambda_i (v_i-v_0)| =1} }  |\lambda_i|.\]
\end{definition}
We point out that if $v_0=0$ and the points $v_1,\dots,v_m$ form an orthogonal frame, the quantity $\Lambda(v_0,\dots,v_m)$ equals $1/\rmin(v_0,\dots,v_m)$.\footnote{For the interested reader, we note that a closed form expression can be obtained for $\maxcoeff$ by using the Lagrange multiplier method. If $m=2$ and $v_0=0$, we obtain for instance that
\[\Lambda(0,v_1,v_2) = \frac{\max\{|v_1|,|v_2|\}}{\sqrt{|v_1|^2|v_2|^2-\langle v_1,v_2 \rangle^2}}.\] In this case we see that the denominator resembles the Cauchy-Schwarz inequality and hence measures the degeneracy of the simplex $\langle 0, v_1,v_2\rangle$. } 

Having defined the quantity $\maxcoeff$, we obtain the following bounds on the linear maps associated to an ordered, linear simplex.
\begin{lemma} \label{lem:rminmaxOperator}
	Let $\Delta$ be an ordered, linear $m$-simplex in $\R^N$, and let $T_\Delta : \R^m \to \R^N$ be its associated linear map. Then the following bounds hold:
	\begin{itemize}
		\item $ \| T_\Delta \| \leq m \cdot \rmax(\Delta)$, and
		\item $ \| T_\Delta^{-1} \| = m \cdot \maxcoeff(\Delta)$.
	\end{itemize}
\end{lemma}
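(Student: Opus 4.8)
The statement asks for two operator-norm bounds on the linear map $T_\Delta : \R^m \to \R^N$ associated to an ordered linear $m$-simplex $\Delta = \langle v_0, \dots, v_m \rangle$, recalling that $T_\Delta(e_i) = v_i - v_0$. I would treat the two bounds separately.

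For the bound $\|T_\Delta\| \le m \cdot \rmax(\Delta)$: write an arbitrary unit vector $x = (x_1, \dots, x_m) \in \R^m$, so $T_\Delta(x) = \sum_{i=1}^m x_i (v_i - v_0)$. By the triangle inequality, $|T_\Delta(x)| \le \sum_{i=1}^m |x_i| \cdot |v_i - v_0|$. Each $|v_i - v_0|$ is the distance between two vertices of $\Delta$, hence at most $\rmax(\Delta)$ by \cref{def:rminmax}. Since $\sum |x_i| \le \sqrt{m}\,|x| = \sqrt{m}$ by Cauchy–Schwarz, this already gives $\|T_\Delta\| \le \sqrt{m}\,\rmax(\Delta) \le m \cdot \rmax(\Delta)$, which is the claimed (slightly lossy) bound. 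This part is entirely routine.

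For the identity $\|T_\Delta^{-1}\| = m \cdot \maxcoeff(\Delta)$: here $T_\Delta^{-1}$ is the linear map from $\lspan(\{-v_0\} + \Delta) = \lspan(v_1 - v_0, \dots, v_m - v_0)$ back to $\R^m$, characterized by $T_\Delta^{-1}(v_i - v_0) = e_i$. Given a vector $w = \sum_{i=1}^m \lambda_i (v_i - v_0)$ in the domain, we have $T_\Delta^{-1}(w) = (\lambda_1, \dots, \lambda_m)$, so $|T_\Delta^{-1}(w)|^2 = \sum_i \lambda_i^2$. The operator norm is the supremum of $|T_\Delta^{-1}(w)|$ over $w$ with $|w| = 1$, i.e.\ over $(\lambda_i)$ with $|\sum_i \lambda_i(v_i - v_0)| = 1$. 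Now $\maxcoeff(\Delta) = \max \{ |\lambda_i| : |\sum \lambda_j(v_j - v_0)| = 1 \}$ controls the largest single coordinate, and one always has $\max_i |\lambda_i| \le \sqrt{\sum_i \lambda_i^2} \le \sqrt{m} \cdot \max_i |\lambda_i|$. Optimizing over the constraint set gives $\maxcoeff(\Delta) \le \|T_\Delta^{-1}\| \le \sqrt{m}\,\maxcoeff(\Delta) \le m\,\maxcoeff(\Delta)$ — but this only yields an inequality, not the claimed equality. The claimed \emph{equality} $\|T_\Delta^{-1}\| = m\,\maxcoeff(\Delta)$ looks suspicious to me; I would expect the sharp relation to be an inequality (or, if one uses the $\ell^\infty$ norm on $\R^m$ rather than the Euclidean norm, an exact equality $\|T_\Delta^{-1}\|_{\infty} = \maxcoeff(\Delta)$ with no factor of $m$). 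So the main obstacle — and the step I would scrutinize most carefully — is reconciling the normalization conventions: which norm is placed on $\R^m$ in the definition of the operator norm here, and whether the paper intends equality or merely a two-sided estimate absorbed into the same symbol. Most likely the intended statement is the two-sided bound $\maxcoeff(\Delta) \le \|T_\Delta^{-1}\| \le m\,\maxcoeff(\Delta)$ (written loosely as an equality up to the crude constant $m$ that is harmless for later estimates), and the proof is the Cauchy–Schwarz sandwich above. I would write it that way, being explicit that the lower bound comes from picking the extremal $(\lambda_i)$ from \cref{def:maxcoeff} and the upper bound from $\sum \lambda_i^2 \le m \max_i \lambda_i^2$.

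Neither half requires anything beyond the triangle inequality and Cauchy–Schwarz once the definitions of $T_\Delta$, $\rmax$, and $\maxcoeff$ are unwound; the only genuine care needed is bookkeeping the norms and confirming the intended reading of the second displayed bound.
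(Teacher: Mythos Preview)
Your proposal is correct and follows essentially the same route as the paper: unwind the definitions of $T_\Delta$, $\rmax$, and $\maxcoeff$, then apply the triangle inequality together with the obvious coordinate bounds (the paper uses $|\mu_i|\le 1$ on the unit sphere rather than your Cauchy--Schwarz $\sum|\mu_i|\le\sqrt{m}$, so it lands directly on $m\cdot\rmax(\Delta)$ without the intermediate $\sqrt{m}$). Your suspicion about the second item is well founded: the paper's own proof only establishes the inequality $\|T_\Delta^{-1}\|\le m\cdot\maxcoeff(\Delta)$ (via $|T_\Delta^{-1}(\sum\mu_i v_i)|\le\sum|\mu_i|\le m\max_i|\mu_i|$), and the ``$=$'' in the statement is a slip---the standard simplex already gives a counterexample to equality, and only the upper bound is used downstream.
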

\begin{proof}
	We write the standard simplex $\Delta^m$ as $\langle e_0,\dots,e_m \rangle$ and the simplex $\Delta$ as $\langle v_0,\dots,v_m \rangle$. We assume without loss of generality that $0=e_0=v_0$. Then the map $T_\Delta$ is defined by $e_i \mapsto v_i$. 
	We observe that
	\begin{align*}
		\|T_\Delta\| 
		= \max_{\substack{\mu_1\dots,\mu_m \in \R, \\ |\sum \mu_i e_i| =1} } \left| T_\Delta \left( \sum_{i=1}^m \mu_i e_i \right) \right| 
		\leq \max_{\substack{\mu_1\dots,\mu_m \in \R, \\ |\sum \mu_i e_i| =1} }  \sum_{i=1}^m |\mu_i| | v_i |  
		\leq \sum_{i=1}^m 1 \cdot \rmax(\Delta)
		= m \cdot \rmax(\Delta) 
	\end{align*}
	and similarly that
	\begin{align*}
		\|T_\Delta^{-1}\| 
		&= \max_{\substack{\mu_1\dots,\mu_m \in \R, \\ |\sum \mu_i v_i| =1} }  \left| T_\Delta^{-1} \left( \sum_{i=1}^m \mu_i v_i \right) \right| 
		\leq \max_{\substack{\mu_1\dots,\mu_m \in \R, \\ |\sum \mu_i v_i| =1} }  \sum_{i=1}^m |\mu_i | | e_i | 
		\leq \sum_{i=1}^m \max_{\substack{\mu_1\dots,\mu_m \in \R, \\ |\sum \mu_i v_i| =1} } |\mu_i | 
		= m \cdot \maxcoeff(\Delta) .\qedhere
	\end{align*}
\end{proof}

A priori we cannot bound the quantities $\rmin$, $\rmax$ and $\maxcoeff$ among all subdivisions of a linear polyhedron, since the space of all linear simplices in $\R^N$ is not compact. However, due to the existence of the model simplices (from \cref{lem:csubdivExact}), we can bound these quantities when considering crystalline subdivisions. We point out that the product of $\rmax$ and $\maxcoeff$ is in particular independent of the number of subdivisions. Additionally, we observe that $\rmin$ and $\rmax$ decrease when we apply crystalline subdivisions, whereas $\Lambda$ increases.
\begin{lemma} \label{lem:rmaxminBound}
	Let $K$ be simplicial complex that is ordered and finite. Then, there exists $B,C,D,E \in \R_+$ such that for all $\ell \in \N$:
	\begin{align*}
		\min_{\Delta \in K_\ell} \rmin(\Delta) = B \cdot 2^{-\ell}&, \quad \max_{\Delta \in K_\ell} \rmax(\Delta) = C \cdot 2^{-\ell}     \\   
		\max_{\Delta \in K_\ell} \Lambda(\Delta) = D \cdot 2^{\ell}   \quad \textrm{ and }& \quad    \max_{\Delta \in K_\ell} \rmax(\Delta) \cdot  \maxcoeff(\Delta) = CD .
	\end{align*}
\end{lemma}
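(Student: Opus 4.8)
The statement combines four assertions that are all essentially corollaries of the model-simplex lemma (\cref{lem:csubdivExact}). The plan is to fix, once and for all, the finite collection $\SC$ of model simplices provided by that lemma, and to extract the constants from it; the key point is that every top simplex of $K_\ell$ is, up to a translation and a scaling by the single factor $2^\ell$, equal to one of the finitely many $\Delta_i \in \SC$. First I would record how the three quantities $\rmin$, $\rmax$ and $\maxcoeff$ transform under an affine map of the form $t = (\text{scaling by }2^\ell) \circ (\text{translation})$. All three are translation-invariant by \cref{def:rminmax} and \cref{def:maxcoeff} (the defining expressions only involve differences $p_i - p_j$ or $v_i - v_0$, and affine planes, which are unaffected by translation). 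Under scaling by $2^\ell$, the distances $\rmin$ and $\rmax$ scale by $2^\ell$, since they are Euclidean distances between points (resp. between a point and an affine plane). For $\maxcoeff$ the behaviour is inverse: if each $v_i - v_0$ is scaled by $2^\ell$, then the constraint $|\sum \lambda_i(v_i - v_0)| = 1$ forces the $\lambda_i$ to scale by $2^{-\ell}$, so $\maxcoeff$ scales by $2^{-\ell}$.

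Given these scaling rules, I would argue as follows. Since $\SC$ is finite, the quantities
\[
B' = \min_{i=0,\dots,I} \rmin(\Delta_i), \quad C' = \max_{i=0,\dots,I} \rmax(\Delta_i), \quad D' = \max_{i=0,\dots,I} \maxcoeff(\Delta_i)
\]
are well-defined positive reals (positivity of $B'$ uses that each $\Delta_i$ is a genuine $m$-simplex, hence non-degenerate, so no vertex lies on the affine span of the opposite face). Now take any $\Delta \in K_\ell^{(\topd)}$; by \cref{lem:csubdivExact} there is an $i$ and a map $t$ (translation then scaling by $2^\ell$) with $\Delta_i = t(\Delta)$, i.e. $\Delta = t^{-1}(\Delta_i)$, where $t^{-1}$ is a scaling by $2^{-\ell}$ composed with a translation. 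By the transformation rules, $\rmin(\Delta) = 2^{-\ell}\rmin(\Delta_i)$, $\rmax(\Delta) = 2^{-\ell}\rmax(\Delta_i)$ and $\maxcoeff(\Delta) = 2^{\ell}\maxcoeff(\Delta_i)$. Taking the minimum (resp. maximum) over $\Delta \in K_\ell^{(\topd)}$, and noting that every $\Delta_i \in \SC$ actually occurs as such a $t(\Delta)$ for some $\ell$ and some $\Delta$ — indeed already in the first crystalline subdivision, by the construction of $\SC$ in the proof of \cref{lem:csubdivExact} — I get
\[
\min_{\Delta \in K_\ell^{(\topd)}} \rmin(\Delta) = B' \cdot 2^{-\ell}, \quad \max_{\Delta \in K_\ell^{(\topd)}} \rmax(\Delta) = C' \cdot 2^{-\ell}, \quad \max_{\Delta \in K_\ell^{(\topd)}} \maxcoeff(\Delta) = D' \cdot 2^{\ell}.
\]
Because $K$ has pure dimension $m$, the minima/maxima over all of $K_\ell$ agree with those over $K_\ell^{(\topd)}$ (lower-dimensional faces only make $\rmin$, $\rmax$ smaller and do not have a well-defined $\maxcoeff$ of the top arity, so one restricts to top simplices as in the statement). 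Setting $B = B'$, $C = C'$, $D = D'$ gives the first three equalities.

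For the last equality, the crucial observation is that the $2^{\pm\ell}$ factors cancel: for the \emph{same} $\Delta$ realizing a given model $\Delta_i$, one has $\rmax(\Delta)\cdot\maxcoeff(\Delta) = \rmax(\Delta_i)\cdot\maxcoeff(\Delta_i)$, independent of $\ell$. However, one must be slightly careful, since a priori the $\Delta$ maximizing $\rmax$ need not be the one maximizing $\maxcoeff$. To handle this I would note that \emph{every} top simplex of \emph{every} $K_\ell$ is a rescaled copy of some $\Delta_i \in \SC$, and every $\Delta_i$ arises this way, so the finite set of values $\{\rmax(\Delta)\cdot\maxcoeff(\Delta) : \Delta \in K_\ell^{(\topd)}\}$ is exactly $\{\rmax(\Delta_i)\cdot\maxcoeff(\Delta_i) : i = 0,\dots,I\}$, and in particular is the same set for every $\ell$; hence its maximum is the $\ell$-independent constant $\max_i \rmax(\Delta_i)\maxcoeff(\Delta_i)$. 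It remains to check that this constant equals $CD = C'D' = (\max_i \rmax(\Delta_i))(\max_i \maxcoeff(\Delta_i))$. This is the one genuinely non-formal point, and I expect it to be the main obstacle: it amounts to the claim that within the collection $\SC$ the products $\rmax\cdot\maxcoeff$ are all equal to the product of the separate maxima, equivalently that the model simplex of largest diameter is also the one of largest $\maxcoeff$. This should follow from the explicit structure of crystalline subdivision — all model simplices in $\SC$ are obtained by cutting a half-size cube into $m!$ ordered linear simplices and then pulling back through the piecewise-linear map $\iota$ of \cref{def:crystsubdivT}, so up to the distortion introduced by $\iota$ they are all congruent — and I would verify it by a direct inspection of these finitely many models, or, if it fails in general, simply weaken the stated identity to an inequality $\max_\Delta \rmax(\Delta)\maxcoeff(\Delta) \le CD$ together with $\ell$-independence, which is all that is used later (e.g. in \cref{sec:linearizing}). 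I would phrase the final write-up so that the essential content — $\ell$-independence of the product and the $2^{\pm\ell}$ scaling of the individual quantities — is front and center, and treat the precise value $CD$ as a matter of choosing the constants compatibly.
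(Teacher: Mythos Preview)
Your approach is exactly the one the paper intends: the lemma is stated there without proof, immediately after the sentence ``due to the existence of the model simplices (from \cref{lem:csubdivExact}), we can bound these quantities when considering crystalline subdivisions'', so the paper's entire argument \emph{is} the reduction to the finite set $\SC$ together with the obvious scaling behaviour of $\rmin$, $\rmax$, $\maxcoeff$ that you spell out.

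Your hesitation about the fourth identity is well-founded and is not something the paper addresses. As you note, what the model-simplex argument actually gives is that $\max_{\Delta\in K_\ell}\rmax(\Delta)\maxcoeff(\Delta)$ is an $\ell$-independent constant, namely $\max_i \rmax(\Delta_i)\maxcoeff(\Delta_i)$, which in general only satisfies $\le CD$. The paper never uses the precise value $CD$ --- every later invocation (\cref{lem:perturbSlopeColor}, the proof of \cref{th:jigglingEucl}) only needs that the product is bounded independently of $\ell$ --- and the unused constant $E$ in the statement is presumably a vestige of an earlier version where the product was called $E$ rather than $CD$. So your suggested resolution, to record $\ell$-independence and the bound $\le CD$ rather than insist on equality, is the right one and loses nothing for the rest of the paper.
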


\subsection{Nice covers of polyhedra} \label{sec:coveringPolyhedra}

Our arguments often have to be localized to subpolyhedra. To this end, it is important for us to be able to cover a given polyhedron by subpolyhedra that are nice. We explain how to do this now.

\subsubsection{Nice subcomplexes}

Before we get to the key definition, recall that the join between two subsets $A,B$ of $\R^N$ is the set
\[ \join{A}{B} = \{ t a + (1-t) b \mid a \in A , b \in B \text{ and } t \in [0,1]\}. \]
In particular, when we take the join of a (suitable) pair of linear simplices $\Delta_1,\Delta_2$ we end up with a higher-dimensional simplex $\Delta \coloneq \join{\Delta_1}{\Delta_2}$ having the two original simplices as opposing faces.

\begin{definition} \label{def:niceSubcomplex}
Let $K$ be a simplicial complex and let $K'$ be a subcomplex. We will say that $K'$ is \textbf{nice} if for each simplex $\Delta \in \str(K')$ the subcomplex $\Delta \cap K'$ is a face of $\Delta$.
\end{definition}

\begin{figure}[h]
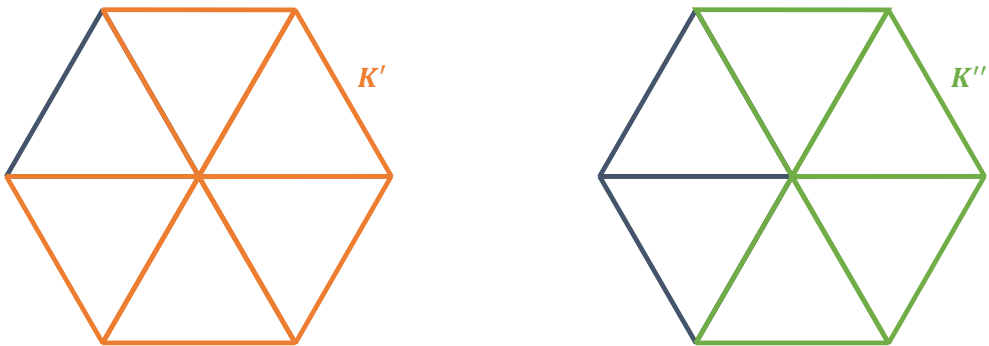

    \centering
    \begin{subfigure}[t]{0.49\linewidth}
        \centering
        \includegraphics[width=\linewidth,page=9, clip=true, trim = 0 3cm 0 0cm]{Fig_genjiggling}
    \end{subfigure}
    \hfill
    \begin{subfigure}[t]{0.49\linewidth}
        \centering
        \includegraphics[width=\linewidth,page=10, clip=true, trim = 0 3cm 0 0cm]{Fig_genjiggling}
    \end{subfigure}
	\centering
	\caption{We illustrate here subcomplexes $K'$ and $K''$ of a simplicial complex $K$, where $K'$ is not nice and $K''$ is.} \label{fig:niceSubcomplex}
\end{figure}

We have illustrated \cref{def:niceSubcomplex} in \cref{fig:niceSubcomplex}. The meaning of niceness is that any $\Delta$ in the ring of $K'$ can be thus seen as the join of two faces, $A$ and $B$, with $A \in K'$ and $B$ disjoint from $K'$. This is useful in order to interpolate from a map/section given over $A$ to a map/section given over $B$ (as in \cref{ssec:relativeLinearization}). The following provides a useful criterion for niceness.
\begin{lemma} \label{lem:nicenessCriterion}
The following properties are equivalent for a subcomplex $K' \subset K$:
\begin{itemize}
\item $K'$ is nice.
\item Every $\Delta \in K$ that equals the convex hull of $\Delta \cap K'$ is an element of $K'$.
\end{itemize}
\end{lemma}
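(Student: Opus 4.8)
The plan is to prove the two implications of the equivalence directly from the definitions, using \Cref{lem:nicenessCriterion}'s phrasing in terms of convex hulls. Recall that $K'$ is nice iff for every $\Delta \in \str(K')$ the intersection $\Delta \cap K'$ (the maximal subcomplex of $K'$ supported on $|\Delta|$) is a face of $\Delta$. The second condition asserts that whenever $\Delta \in K$ satisfies $\Delta = \operatorname{conv}(\Delta \cap K')$ — meaning $\Delta$ is the convex hull of the union of those of its faces lying in $K'$ — then $\Delta$ itself lies in $K'$.

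For the forward direction, suppose $K'$ is nice and let $\Delta \in K$ with $\Delta = \operatorname{conv}(\Delta \cap K')$. Since $\Delta \cap K'$ is nonempty (otherwise its convex hull would be empty), $\Delta$ shares a face with a simplex of $K'$, so $\Delta \in \str(K')$; hence by niceness $\Delta \cap K'$ is a single face $A$ of $\Delta$. But then $\operatorname{conv}(\Delta \cap K') = \operatorname{conv}(A) = A$, and the hypothesis $\Delta = \operatorname{conv}(\Delta \cap K')$ forces $A = \Delta$, so $\Delta = A \in K'$. For the converse, suppose the convex-hull condition holds and let $\Delta \in \str(K')$. Then $\Delta \cap K'$ is a nonempty subcomplex of $\Delta$; let $A$ be the smallest face of $\Delta$ containing it, i.e. the face spanned by all vertices of $\Delta$ that appear in some simplex of $\Delta \cap K'$. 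I claim $A \in K'$: indeed $A = \operatorname{conv}(A \cap K')$ because every vertex of $A$ lies in $\Delta \cap K'$ hence in $A \cap K'$, so $A$ equals the convex hull of its vertices each of which is covered by $A \cap K'$ — more carefully, $A \cap K' \supseteq \Delta \cap K'$ and $A$ is by construction the convex hull of the union of $\Delta \cap K'$, giving $A = \operatorname{conv}(\Delta \cap K') \subseteq \operatorname{conv}(A \cap K') \subseteq A$. Applying the convex-hull condition to the simplex $A \in K$ yields $A \in K'$. Since $A$ is a face of $\Delta$ with $|A| \supseteq |\Delta \cap K'|$ and $A \in K'$, $A$ itself is a subcomplex of $\Delta \cap K'$, so in fact $\Delta \cap K' = A$ is a face of $\Delta$; thus $K'$ is nice.

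The one step requiring care is the bookkeeping in the converse: one must check that the face $A$ spanned by the "$K'$-vertices" of $\Delta$ actually satisfies $\operatorname{conv}(A \cap K') = A$ so that the convex-hull hypothesis applies to it, and then that membership $A \in K'$ upgrades $\Delta \cap K'$ from "subcomplex containing all of $A$'s vertices" to "equal to $A$". Both follow because a simplicial complex closed under faces containing all vertices of a face $A$ must contain $A$ only if it contains $A$ as a simplex — which is exactly what the hypothesis provides — and then $\Delta \cap K'$, being the maximal subcomplex on $|\Delta|$, contains $A$ and is contained in $A$ (since no vertex outside $A$ can appear). I expect this vertex-spanning argument to be the main (minor) obstacle; everything else is immediate from unwinding \Cref{def:niceSubcomplex}.

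\begin{proof}
Throughout, for $\Delta \in \str(K')$ write $\Delta \cap K'$ for the maximal subcomplex of $K'$ with support contained in $|\Delta|$, and let $V(\Delta \cap K')$ denote the set of vertices of $\Delta$ that occur in some simplex of $\Delta \cap K'$.

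Assume first that $K'$ is nice, and let $\Delta \in K$ satisfy $\Delta = \operatorname{conv}(\Delta \cap K')$. Then $\Delta \cap K'$ is nonempty, so $\Delta$ is adjacent to a simplex of $K'$ and hence $\Delta \in \str(K')$. By niceness, $\Delta \cap K' = A$ for some face $A$ of $\Delta$. Therefore $\operatorname{conv}(\Delta \cap K') = \operatorname{conv}(A) = A$, and combined with the hypothesis $\Delta = \operatorname{conv}(\Delta \cap K')$ this gives $\Delta = A \in K'$.

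Conversely, assume that every $\Delta \in K$ with $\Delta = \operatorname{conv}(\Delta \cap K')$ lies in $K'$, and let $\Delta \in \str(K')$. Let $A$ be the face of $\Delta$ spanned by $V(\Delta \cap K')$; this is nonempty since $\Delta \in \str(K')$. By construction $|A| = \operatorname{conv}(\Delta \cap K')$, and since $\Delta \cap K'$ is a subcomplex of $A$ we have $\Delta \cap K' \subseteq A \cap K'$, hence
\[
A = \operatorname{conv}(\Delta \cap K') \subseteq \operatorname{conv}(A \cap K') \subseteq A,
\]
so $A = \operatorname{conv}(A \cap K')$. Applying the hypothesis to the simplex $A \in K$ gives $A \in K'$. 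Since $A$ is a subcomplex of $K'$ supported on $|A| \subseteq |\Delta|$, it is contained in $\Delta \cap K'$; and conversely $\Delta \cap K' \subseteq A$ as noted. Hence $\Delta \cap K' = A$ is a face of $\Delta$. As $\Delta \in \str(K')$ was arbitrary, $K'$ is nice.
\end{proof}
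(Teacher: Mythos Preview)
Your proof is correct and follows essentially the same approach as the paper: both directions proceed by considering the face $A$ of $\Delta$ spanned by the vertices lying in $\Delta \cap K'$, and in the converse direction applying the convex-hull hypothesis to $A$ to force $A \in K'$ and hence $\Delta \cap K' = A$. Your write-up is somewhat more explicit about the bookkeeping (e.g.\ verifying $A = \operatorname{conv}(A \cap K')$ via the chain of inclusions), but the underlying argument is the same.
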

\begin{proof}
Suppose $K'$ is nice. Let $\Delta \in K$ equal the convex hull of $\Delta \cap K'$. Then $\Delta \in \str(K')$ and hence by assumption $\Delta \cap K'$ is a face of $\Delta$. Since a face is convex, it follows that $\Delta = \Delta \cap K'$ and hence $\Delta \in K'$.

Conversely, given $\Delta \in \str(K')$ we consider the vertices in $\Delta \cap K'$. They span a face $F$ of $\Delta$, which a priori does not need to be an element of $K'$. For niceness, it suffices to show that $\Delta \cap K'= F$. Note that $F$ is convex and therefore by assumption is an element of $K'$. Hence $F \subset \Delta \cap K'$ and since we cannot add any extra vertex in $\Delta \cap K'$ to $F$, we have the desired equality.
\end{proof}

\subsubsection{Obtaining nice subcomplexes}

The question now is how to cover a simplicial complex by nice subcomplexes. To achieve this we exploit the usual notion of convexity in Euclidean space. We recall from \cref{def:SCinSubset} that $K \cap U$ is the maximal simplicial subcomplex of $K$ contained in a given subset $U$.
\begin{lemma} \label{lem:niceSubcomplex}
Let $K$ be a simplicial complex in $\R^N$ and let $U \subset \R^N$ be a convex subset. Then the intersection $K \cap U$ is a nice subcomplex.
\end{lemma}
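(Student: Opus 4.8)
The plan is to verify the niceness criterion of \cref{lem:nicenessCriterion}: it suffices to show that any simplex $\Delta \in K$ which equals the convex hull of $\Delta \cap (K \cap U)$ is itself an element of $K \cap U$, i.e. satisfies $|\Delta| \subset U$. So suppose $\Delta = \langle v_0, \dots, v_k \rangle \in K$ is such that $\Delta$ is the convex hull of the subcomplex $\Delta \cap (K \cap U)$. First I would unwind what that subcomplex is: $\Delta \cap (K \cap U)$ consists of those faces $F$ of $\Delta$ with $|F| \subset U$. Its convex hull is the convex hull of the union of those faces, which is the convex hull of the set of vertices $v_i$ that lie in $U$ (since every vertex of $\Delta$ contained in $U$ is a $0$-face lying in $U$, hence in the subcomplex, and conversely any vertex of a face in the subcomplex is such a vertex).

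The hypothesis therefore says $\Delta = \operatorname{conv}\{v_i : v_i \in U\}$. In particular every vertex $v_j$ of $\Delta$ is a convex combination of vertices of $\Delta$ lying in $U$; but the vertices of a simplex are its extreme points, so each $v_j$ must in fact equal one of those $v_i \in U$, forcing $v_j \in U$ for all $j$. Now $|\Delta|$ is the convex hull of $\{v_0, \dots, v_k\} \subset U$, and since $U$ is convex this gives $|\Delta| \subset U$. Hence $\Delta \in K \cap U$ by \cref{def:SCinSubset}, and the criterion of \cref{lem:nicenessCriterion} applies, so $K \cap U$ is nice.

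I do not expect any real obstacle here; the only point requiring a little care is the bookkeeping in the first paragraph — correctly identifying the convex hull of the subcomplex $\Delta \cap (K \cap U)$ with the convex hull of the $U$-vertices of $\Delta$, and then invoking that vertices of a simplex are extreme points to upgrade ``convex combination of $U$-vertices'' to ``is a $U$-vertex''. One should also make sure the argument is vacuously fine in edge cases (e.g. when $\Delta$ has no vertices in $U$, the convex hull of the empty subcomplex is empty, so the hypothesis $\Delta = \emptyset$ never occurs and there is nothing to check). Everything else is immediate from convexity of $U$ and the already-proved \cref{lem:nicenessCriterion}.
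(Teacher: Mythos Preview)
Your proof is correct and follows exactly the same route as the paper's, via the criterion of \cref{lem:nicenessCriterion}. The paper's version is simply terser: it asserts in one line that if $\Delta$ equals the convex hull of $\Delta \cap (K \cap U)$ then all vertices of $\Delta$ lie in $K \cap U$, and hence so does their span $\Delta$ by convexity of $U$; your extreme-point argument is precisely the unpacking of that first step.
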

\begin{proof}
We check the second condition of \cref{lem:nicenessCriterion}. Let $\Delta \in K$ equal the convex hull of $\Delta \cap (K \cap U)$. Then the vertices of $\Delta$ are in $K \cap U$ and hence also its span $\Delta$. 
\end{proof}

Naively, we could now consider locally finite covers $\{U_i\}$ of $\R^N$ by convex opens and look at the nice subcomplexes $\{K \cap U_i\}$. The issue is that these may not cover $K$ if the top simplices of $K$ are too large compared to the chosen opens. However, there is an easy fix:
\begin{lemma} \label{lem:niceCover}
Let $K$ be a finite simplicial complex in $\R^N$. Let $\{U_i\}$ be a locally finite cover of $\R^N$ by convex opens. Then, $\{K_\ell \cap U_i\}$ is a cover of $K_\ell$ by nice subcomplexes, if $\ell$ is large enough.
\end{lemma}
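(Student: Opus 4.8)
The plan is to combine Lemma \ref{lem:niceSubcomplex} (each $K_\ell \cap U_i$ is nice, since $U_i$ is convex) with a uniform size estimate on the simplices of $K_\ell$. Niceness of each piece is immediate from Lemma \ref{lem:niceSubcomplex}, so the only content is the covering claim: that for $\ell$ large, every top simplex $\Delta \in K_\ell^{(\topd)}$ lies entirely inside some $U_i$. The key point is that the diameters of the simplices of $K_\ell$ shrink to zero uniformly as $\ell \to \infty$, by Lemma \ref{lem:rmaxminBound}: we have $\max_{\Delta \in K_\ell} \rmax(\Delta) = C \cdot 2^{-\ell}$, so in particular every simplex of $K_\ell$ has diameter at most $C \cdot 2^{-\ell}$.

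First I would extract a Lebesgue-number-type statement from the cover $\{U_i\}$. Since $K$ is finite, $|K|$ is compact; restrict attention to a compact neighborhood $L$ of $|K|$ in $\R^N$. The cover $\{U_i\}$ of $\R^N$ restricts to an open cover of $L$, which is locally finite, hence admits a Lebesgue number $\delta > 0$: every subset of $L$ of diameter less than $\delta$ is contained in some single $U_i$. (Alternatively, one can argue directly: for each $x \in |K|$ pick $U_{i(x)} \ni x$ and a ball $B(x, 2r_x) \subset U_{i(x)}$; by compactness finitely many $B(x_k, r_{x_k})$ cover $|K|$; set $\delta = \min_k r_{x_k}$.)

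Next, choose $\ell$ so that $C \cdot 2^{-\ell} < \delta$. Then every simplex $\Delta \in K_\ell$ satisfies $\operatorname{diam}(\Delta) \le \rmax(\Delta) \le C \cdot 2^{-\ell} < \delta$, and since $\Delta$ is a nonempty subset of $|K| \subset L$ of diameter less than $\delta$, it is contained in some $U_i$. By Definition \ref{def:SCinSubset} this means $\Delta \in K_\ell \cap U_i$. Applying this to every top-dimensional simplex (and noting that faces are then automatically covered as well, since they lie in the same $U_i$), we conclude that $\{K_\ell \cap U_i\}$ covers $K_\ell$. Combined with niceness from Lemma \ref{lem:niceSubcomplex}, this finishes the proof.

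I do not expect a serious obstacle here: the statement is essentially ``uniformly small simplices fit inside the pieces of any open cover of a compact set,'' and both ingredients (uniform shrinking via crystalline model simplices, and a Lebesgue number) are already available. The only mild subtlety is that $\{U_i\}$ covers all of $\R^N$ rather than just the compact polyhedron, so one should be slightly careful to pass to a compact piece before invoking compactness of the index set of the relevant subcover; this is handled by the remark above.
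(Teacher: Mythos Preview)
Your proposal is correct and follows essentially the same route as the paper: invoke a Lebesgue number for the restricted cover on the compact set $|K|$, then use \cref{lem:rmaxminBound} to make $\rmax(\Delta)$ smaller than this number, with niceness supplied by \cref{lem:niceSubcomplex}. The paper's proof is just a terser version of yours; your extra care about passing to a compact piece before taking a Lebesgue number is harmless but not strictly needed, since one can work directly with the cover $\{U_i \cap |K|\}$ of $|K|$.
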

\begin{proof}
Since $K$ is finite, the set $|K|$ is compact and hence there exists some strictly positive Lebesgue's number $\delta$ associated to the cover $\{ U_i \cap |K|\}$. We now take $\ell$ large enough such that $\rmax(\Delta) < \delta$ for every $\Delta \in K_\ell$.
\end{proof}

\subsection{Barycentric cone off} \label{sec:coningOff}

In light of Lemma \ref{lem:niceCover}, the question remains how to handle simplicial complexes that are not finite. Indeed, without finiteness, we have no bound on the number of subdivisions $\ell$ required to construct a cover by nice subcomplexes.

\begin{figure}[h]
    \centering
    \includegraphics[width=0.5\textwidth,page=5]{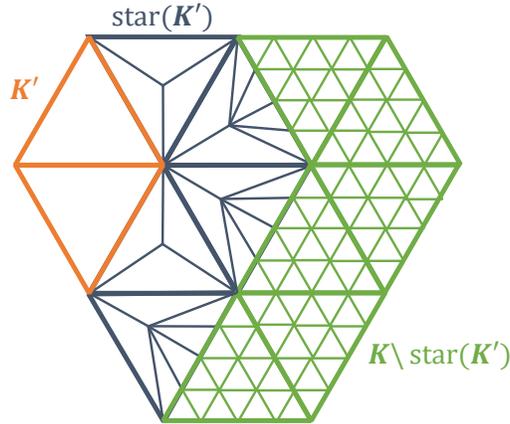}
    \caption{The subcomplex $K \setminus \str(K')$ of $K$ is subdivided in a crystalline manner, but $K'$ not. Hence we apply the barycentric cone off within $\ring(K')$ with respect to an arbitrary crystalline subdivision of $K \setminus \str(K')$.} \label{fig:barycentricConeOff}
\end{figure}

To address this, we must work relatively. Concretely, we need a scheme capable of subdividing the simplicial complex $K$ over a given region, while keeping it the same elsewhere. This will allow us to keep subdividing $K$ as we deal with an infinite cover of $|K|$. It turns out that we can do this rather naively:
\begin{definition} \label{def:barycentricConeOff}
Let $\Delta \subset \R^N$ be a linear simplex. Suppose we are given a subdivision $S$ of its boundary. The \textbf{barycentric cone off} of $\Delta$ with respect to $S$ is obtained by taking the join of the barycenter of $\Delta$ with $S$.
\end{definition}
Hence given a (nice) subcomplex $K'$ of $K$ and a subdivision $K''$ of $K'$, we can apply the barycentric cone off in each simplex in the ring around $K'$. This results in a subdivision of the whole of $K$ that leaves the complement of $\str(K')$ unchanged. This is depicted in Figure \ref{fig:barycentricConeOff}.

\subsection{Generalized crystalline subdivision} \label{sec:crystallineConingOff}

The barycentric coning off procedure described in \cref{def:barycentricConeOff} is enough for us to establish our main jiggling result, also in the case of non-compact manifolds (\cref{sec:jigglingMfd}). However, it turns out not to be well suited to achieving general position (as in Thurston's jiggling lemma) over non-compact manifolds. The reason is that for this application we need some control (in terms of model simplices) as we keep coning off.

Hence we now present a way of coning off where model simplices are preserved, in the sense that we can construct model simplices for a subcomplex $K' \subset K$ before knowing what crystalline subdivision we want outside of $K'$. However, this means we do need a region in which we cone off, which in general will be larger than just a ring. We illustrate the construction in \cref{fig:gencrystsubdivT}.

For the following statement, to ease the notation, we will write $B(A,r)$ for the $r$-neighborhood of a subset $A \subset \R^N$. We also recall that we have defined when simplicial complexes agree in \cref{def:subcomplAgree}.
\begin{figure}[h]
	\includegraphics[width=0.5\textwidth,page=11, clip=true, trim=0 0 0.1cm 0]{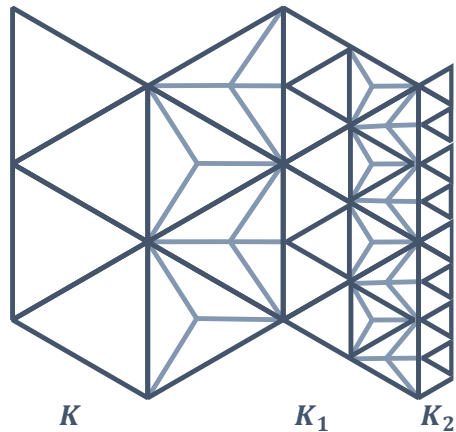}
	\centering
	\caption{A generalized crystalline subdivision of a polyhedron $K$. We have indicated below the crystalline subdivisions of $K$ and in-between we use the barycentric cone off which we have indicated in a lighter color. } \label{fig:gencrystsubdivT}
\end{figure}
\begin{proposition} \label{prop:crystallineConeOff}
    Let $K$ be a simplicial complex in $\R^N$, let $\delta>0$ be given and let $A \subset \R^n$ be a compact subset. Then, there exists an $L \in \N$ such that for all $\ell_0 \geq L$ there exist
    \begin{itemize}
        \item a finite collection $\SC$ of model simplices, and
        \item a natural number $C \in \N$,
    \end{itemize}
    such that for all $\ell_1 \geq \ell_0$ there exists a subdivision $K'$ of $K$ satisfying
    \begin{itemize}
        \item $K'$ agrees with $K_{\ell_0}$ on $A$,
        \item $K'$ agrees with $K_{\ell_1}$ on $\R^N \setminus B(A,\delta)$, 
        \item the simplices of $K' \cap B(A,\delta)$ are modeled by $\SC$, that is, they equal a model simplex in $\SC$ up to translation and scaling by $2^{\ell}$ where $\ell \in [\ell_0,\ell_1]$, and
        \item the subcomplex $K' \cap B(A,\delta)$ can be colored with at most $C$ colors.
    \end{itemize}
\end{proposition}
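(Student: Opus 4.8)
The plan is to build $K'$ by a ``telescoping'' construction: we subdivide $K$ crystallinely at the finest level $\ell_1$ far from $A$, at the coarsest level $\ell_0$ near $A$, and interpolate across the collar $B(A,\delta)\setminus \Int A$ using a sequence of barycentric cone-offs (from Section \ref{sec:coningOff}) at the ``seams'' between consecutive subdivision levels. First I would fix nested compact sets $A = A_{\ell_0} \supset \cdots$; more precisely, pick a strictly decreasing sequence of radii $\delta = \rho_{\ell_0} > \rho_{\ell_0+1} > \cdots$ with $\rho_\ell \to 0$, set $A_\ell := B(A,\rho_\ell)$, and declare $K'$ to coincide with $K_\ell$ on the ``shell'' $A_\ell \setminus A_{\ell+1}$. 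Between shell $A_\ell \setminus A_{\ell+1}$ (carrying $K_\ell$) and the next finer shell (carrying $K_{\ell+1}$) there is a mismatch along the common boundary, which we repair with a single layer of barycentric cone-offs: on each top simplex straddling the boundary we already have a subdivision of one part of its boundary inherited from $K_\ell$ and of another from $K_{\ell+1}$, and coning off the barycenter resolves this into a subdivision of the whole simplex leaving both neighbouring shells untouched. The number $L$ is chosen so that for $\ell_0 \ge L$, the simplices $K_{\ell_0}$ are small enough (using $\rmax(\Delta) < C\cdot 2^{-\ell_0}$ from \Cref{lem:rmaxminBound}) that the shell $A_{\ell_0}\setminus A_{\ell_0+1}$ actually contains the simplices we want it to and the coning-off layers stay inside $B(A,\delta)$; this is exactly a Lebesgue-number-type estimate as in the proof of \Cref{lem:niceCover}.

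Next I would verify the model-simplex claim. The simplices of $K'$ fall into two types: those inside a crystalline shell, and those produced by a cone-off. For the first type, \Cref{lem:csubdivExact} gives a finite collection of model simplices (for $K_1$) such that every $\Delta\in K_\ell^{(\topd)}$ equals a model simplex up to translation and scaling by $2^\ell$; since only finitely many levels $\ell \in [\ell_0,\ell_1]$... wait, actually $\ell_1$ is unbounded, but the model collection from \Cref{lem:csubdivExact} is already uniform in $\ell$, so this is fine and $\SC$ can be taken independent of $\ell_1$. For the cone-off simplices: a barycentric cone-off of a simplex $\Delta$ at level $\ell$ against a subdivision of $\partial\Delta$ that is partly $\ell$-crystalline and partly $(\ell+1)$-crystalline produces simplices that are, again up to translation and scaling by $2^\ell$, affine copies of finitely many combinatorial types — here I would argue that because crystalline subdivision on $\partial\Delta$ only produces model faces, and the barycenter is canonically placed, the cone-off simplices also come in finitely many shapes; enlarge $\SC$ to include these. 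The key point is that all of this finite data is determined by the original finite complex $K$ and depends on $\ell_0$ only through which model faces can appear, not on $\ell_1$.

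For the coloring bound: inside a crystalline shell the adjacency degree is bounded as in \Cref{lem:coloringCrystalline}, by $(3^m-1)\cdot m!\cdot A$ where $A = \#K^{(\topd)}$, uniformly in $\ell$. The cone-off layers add simplices whose stars meet only the adjacent crystalline shells and the same cone-off layer; since each such simplex is adjacent to a bounded number of others (bounded again in terms of $m$ and $A$ and the bounded number of cone-off simplices per original simplex), the whole of $K' \cap B(A,\delta)$ has bounded adjacency degree, say $C$, and a greedy coloring gives a $C$-coloring. Crucially $C$ is independent of $\ell_1$ (new shells just repeat the same local combinatorics) and we fix it once $\ell_0$ is chosen.

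The main obstacle I expect is the precise bookkeeping at the seams, and in particular ensuring the cone-off layers genuinely stay within $B(A,\delta)$ and are ``thin'' enough not to overlap each other — this forces the radii $\rho_\ell$ to decay fast relative to $2^{-\ell}$ (so that the simplices of $K_\ell$ near radius $\rho_\ell$ are much smaller than the width $\rho_\ell - \rho_{\ell+1}$ of the shell), and it is where the hypothesis that $A$ is compact (hence $\partial A$ is ``uniformly thick'' and a single $\delta$ works) is really used. A secondary subtlety is checking that ``$K'$ agrees with $K_{\ell_0}$ on $A$'' in the strong sense of \Cref{def:subcomplAgree}: one must confirm that the largest subcomplex of $K'$ supported in $A$ is genuinely isomorphic (not just a common refinement) to the largest subcomplex of $K_{\ell_0}$ supported in $A$, which holds because on $A$ we have performed no subdivision or coning at all.
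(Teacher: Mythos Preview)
Your overall strategy matches the paper's: telescope through the crystalline levels $\ell_0,\ell_0+1,\ldots,\ell_1$ on concentric shells around $A$, joining adjacent levels by a single layer of barycentric cone-offs. However, your explicit shell indexing is reversed. With $\rho_{\ell_0}=\delta$ and $\rho_\ell$ strictly decreasing to $0$, the shell $A_{\ell_0}\setminus A_{\ell_0+1}$ carrying $K_{\ell_0}$ sits at the \emph{outer} edge of $B(A,\delta)$, and the subdivisions become finer as one moves \emph{toward} $A$ --- the opposite of what the proposition requires ($K_{\ell_0}$ on $A$, $K_{\ell_1}$ on $\R^N\setminus B(A,\delta)$) and of what your own opening sentence promises. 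Your ``$\rho_\ell\to 0$'' with infinitely many shells is a symptom of this reversal: with the correct orientation only the finitely many levels $\ell_0,\ldots,\ell_1$ are needed, and nothing is assigned to $A$ itself in your scheme.

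The paper instead lets neighbourhoods $B^{(i)}$ of $A$ \emph{grow}, with radii $\sum_{j=0}^i 2^{-j}s$ (where $s=\delta/4$, so the total stays below $\delta$); at step $i$ it keeps the previous (coarser) subdivision on $B^{(i)}$, refines to level $\ell_0+i+1$ outside, and cones off on the ring in between. After $\ell_1-\ell_0$ steps one has $K_{\ell_0}$ near $A$ and $K_{\ell_1}$ outside $B(A,\delta)$. Once you flip your radii to match this direction, the remainder of your sketch --- finitely many model shapes from \Cref{lem:csubdivExact} enlarged by the finitely many cone-off pieces, a coloring bound via bounded adjacency degree as in \Cref{lem:coloringCrystalline}, and the choice of $L$ via \Cref{lem:rmaxminBound} --- is correct and agrees with the paper.
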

\begin{proof}
    \pfstep{Constructing the control data} 
    The results in \cref{sec:propCrystalline} yield a maximum number of colors $C'$ and a finite collection of model simplices $\SC'$ for all crystalline subdivisions of the subcomplex of $K$ consisting of simplices with non-empty intersection with $B(A,\delta)$.
    
    We enlarge the collection of model simplices $\SC'$ to account for the barycentric cone off of \cref{def:barycentricConeOff} as follows: Given a model simplex $\Delta \in \SC'$, we apply crystalline subdivision to subdivide one of its faces. Now using the barycentric cone off, we obtain a subdivision of $\Delta$ and we add the resulting simplices to $\SC'$. We repeat these steps for each face of $\Delta$ and each model simplex $\Delta \in \SC' $. This defines a finite collection $\SC$ of model simplices. 

    \pfstep{The construction} Choose $L$ to be large enough such that the simplices of $K_L \cap B(A,\delta)$ have a maximal radius of at most $ s= \delta/4$. Let natural numbers $\ell_1$ and $\ell_0$ be given such that $\ell_1 \geq \ell_0 \geq L$.
    
    Define $K^{0} \coloneq K$ and let $B^{(i)}$ denote a neighborhood of $A$ of size $\sum_{j=0}^i 2^{-j} s $. Then by induction we define a sequence of subdivisions $K^{(i)}$ as follows. Assume we have already constructed $K^{(i)}$, then we define its subdivision $K^{(i+1)}$ as the simplicial complex that\begin{itemize}
    	\item agrees with $K^{(i)}$ on $B^{(i)}$,
    	\item agrees with $K_{\ell_0+i+1}$ outside of $\ringT[K^{(i)}]{K^{(i)} \cap B^{(i)}}$, and
    	\item is the barycentric cone off on $\ringT[K^{(i)}]{K^{(i)} \cap B^{(i)}}$.
    \end{itemize} We note that the double star of $K^{(i)} \cap B^{(i)}$ is contained in $B^{(i+1)}$.

    We define $K' \coloneqq K^{(\ell_1-\ell_0)}$. We see that the coning off takes place in $B^{(\ell_1-\ell_0)}$, which is contained in neighborhood of $A$ of radius 
    \[\sum_{j=0}^{\ell_1-\ell_0} 2^{-j} s < \sum_{j=0}^\infty 2^{-j} s = 2s < \delta.\] 

    \pfstep{Wrapping up the proof} The claimed properties follow from the fact that the simplices in $K'$ are either simplices in $K_\ell$ for some $\ell \in [\ell_0,\ell_1]$, or are obtained from such a simplex by a single barycentric cone off. Hence they are indeed modeled by an element of $\SC$.
    
    We bound the maximum number of colors $C$ as follows. Each simplex is involved in at most one coning off process. Hence we can take $C = C' \cdot A$, where we defined $C'$ at the beginning of this proof, and $A$ is the resulting number of top-dimensional subsimplices after applying the barycentric cone off to a simplex $\Delta$. We can bound $A$ by $m-1+2^{m-1}$, where $m$ is the dimension of $\Delta$. To see this, let $F$ be the face of $\Delta$ subdivided once in a crystalline manner. Then the subdivision of $F$ contains $2^{m-1}$ simplices, which are each coned off to the barycenter. All the other faces of $\Delta$, of which there are $m-1$, are also coned off to the barycenter.
\end{proof}

We note that by the existence of the model simplices we get bounds on the minimal and maximal distance and their ratios, similar to \cref{lem:rmaxminBound}.  

We will refer to a subdivision $K'$ of $K$ as a \textbf{generalized crystalline subdivision}, if each simplex in $K'$ is an element of a crystalline subdivision of $K$ or is obtained from such a simplex by a single barycentric cone off where the faces are subdivided at most once in a crystalline manner. The subdivision $K'$ from \cref{prop:crystallineConeOff} is in particular an example, since each $K^{(i)}\cap B^{(i)}$ from the proof is a nice subcomplex. Hence we refer again to \cref{fig:gencrystsubdivT} for an example.

	\section{Linearization of piecewise maps} \label{sec:linearizing}

In this section we focus on the following notion:
\begin{definition} \label{def:linearization}
Let $K$ be a simplicial complex and let $|K|$ be the corresponding linear polyhedron. Let $s: |K| \to \R^n$ be a map that is piecewise smooth with respect to $K$. The \textbf{linearization} $s^\lin : |K| \to \R^n$ of $s$ with respect to $K$ is the unique piecewise linear map such that $s^\lin$ and $s$ agree on the vertices of $K$.
\end{definition}
We observe that $s$ and $s^\lin$ are homotopic through piecewise smooth maps, thanks to linear interpolation.

We now motivate this definition. We recall that our ultimate goal is to study sections of bundles $E$ over triangulated manifolds $T: |K| \rightarrow M$. In this setting we will not be able speak of linear maps into $E$. Nonetheless, we can consider linear maps with respect to specified fibered charts, but the transition functions between them will introduce non-linearity. Hence our strategy relies on the following observations:
\begin{itemize}
\item Linearizing a section can be done in a $C^1$-small manner if we subdivide $K$ enough.
\item Linearizing changes higher derivatives drastically, but this is of no consequence to us, since we are studying first order differential relations.
\item Once we have linearized, the jiggling argument is purely linear in nature.
\end{itemize}
As such, every time we move to a new fibered chart, we will linearize our data with respect to it.

\subsection{The linearization statement}

Fix a simplicial complex $K$ of pure dimension. Let $s: |K| \to \R^n$ be a map that is piecewise smooth with respect to $K$. We introduce the notation $s^\lin_\ell$ for the linearization of $s$ with respect to the $\ell$th crystalline subdivision $K_\ell$. Then:
\begin{lemma}\label{lem:linSecDistEucl}
Let $s: |K| \to \R^n$ be a piecewise smooth map with respect to a finite simplicial complex $K$ of pure dimension. Then, as $\ell \to \infty$ we have:
\begin{itemize}
\item $\dist0 (s,s^\lin_\ell) = O(2^{-2\ell})$, and
\item $\dist1 (s,s^\lin_\ell) = O(2^{-\ell})$.
\end{itemize} 
\end{lemma}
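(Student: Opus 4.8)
The plan is to reduce everything to a local estimate on a single top-dimensional simplex $\Delta$ of $K_\ell$, using the fact that for a finite $K$ there are (by \cref{lem:csubdivExact} and \cref{lem:rmaxminBound}) only finitely many model simplices and uniform bounds $\rmax(\Delta) \le C \cdot 2^{-\ell}$, $\maxcoeff(\Delta) \le D \cdot 2^{\ell}$, with $\rmax \cdot \maxcoeff$ independent of $\ell$. On each such $\Delta$, write $\Delta = \langle v_0, \dots, v_m \rangle$ and identify it with the standard simplex via $T_\Delta \colon \R^m \to \R^N$ from \cref{def:SimplexToLinMap}, so that $s|_\Delta$ becomes a smooth map $g = s \circ T_\Delta$ on $\Delta^m$ whose linearization $g^\lin$ is the affine map agreeing with $g$ at the vertices $e_0, \dots, e_m$. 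The $C^0$- and $C^1$-distances between $s|_\Delta$ and $s^\lin_\ell|_\Delta$ transform under $T_\Delta$, $T_\Delta^{-1}$ with operator norms controlled by \cref{lem:rminmaxOperator}: the $C^0$-distance is unchanged, while the $C^1$-distance picks up a factor bounded by $\|T_\Delta^{-1}\| = m \cdot \maxcoeff(\Delta)$.

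The heart of the matter is then the classical first-order Taylor estimate for the affine interpolant of a $C^2$ function on a simplex. First I would bound the $C^2$-norm of $g = s \circ T_\Delta$: since $s$ is piecewise smooth on the fixed finite complex $K$, its restriction to each top simplex of $K$ has a uniformly bounded $C^2$-norm (this is where finiteness of $K$ enters, giving a constant independent of $\ell$), and composing with the linear map $T_\Delta$ multiplies the first derivative by $\|T_\Delta\| \le m \cdot \rmax(\Delta) = O(2^{-\ell})$ and the second derivative by $\|T_\Delta\|^2 = O(2^{-2\ell})$. Then the standard estimate — obtained by applying Taylor's theorem with integral remainder at a vertex of $\Delta^m$, or equivalently by noting that $g - g^\lin$ vanishes at all $m+1$ vertices of the (uniformly shaped, bounded-diameter) model simplex — gives $\|g - g^\lin\|_{C^0(\Delta^m)} \le c \, \|D^2 g\|_{C^0}$ and $\|D(g - g^\lin)\|_{C^0(\Delta^m)} \le c \, \|D^2 g\|_{C^0}$, with $c$ depending only on the finite list of model simplices. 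Combining: $\dist0(s|_\Delta, s^\lin_\ell|_\Delta) = O(\|D^2 g\|_{C^0}) = O(2^{-2\ell})$, and $\dist1(s|_\Delta, s^\lin_\ell|_\Delta)$, after inserting the factor $\|T_\Delta^{-1}\|$, is $O(\maxcoeff(\Delta) \cdot 2^{-2\ell}) = O(2^{\ell} \cdot 2^{-2\ell}) = O(2^{-\ell})$, precisely because $\rmax \cdot \maxcoeff$ is $\ell$-independent.

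Finally I would take the maximum over the finitely many model simplices — equivalently, over all top simplices of $K_\ell$, uniformly in $\ell$ — to conclude both estimates as stated, using that the $C^r$-distance on $\MapsPS(|K|,\R^n)$ (\cref{def:CrTopPolyhedron}) is the supremum over top simplices of a common subdivision, here $K_\ell$ itself. The main obstacle, and the only place where care is genuinely needed, is tracking how the linear reparametrization $T_\Delta$ interacts with the $C^1$-norm: one must verify that the gain $2^{-2\ell}$ from the Taylor estimate on the \emph{reparametrized} map, against the loss $\|T_\Delta^{-1}\| = O(2^\ell)$ coming back to $\Delta \subset \R^N$, nets out to $2^{-\ell}$ — which is exactly the content of the identity $\max_\Delta \rmax(\Delta)\cdot\maxcoeff(\Delta) = CD$ in \cref{lem:rmaxminBound}. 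Everything else is the routine finite-dimensional second-order Taylor estimate on a compact family of simplices.
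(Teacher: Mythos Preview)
Your proof is correct, but it takes a different route from the paper's and uses more machinery than is actually needed.

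The paper reparametrizes once, at the level of each top simplex of the original $K$ (mapping it affinely to $\Delta^m$), and then works directly with the small subsimplices $\Delta \in (\Delta^m)_\ell$ without any further change of coordinates. The linearization $s^\lin_\ell|_\Delta$ is written explicitly as $x \mapsto s(v) + A(x-v)$, where $v$ is a vertex and the columns of $A$ are $s(v+e_i)-s(v)$ for the edge vectors $e_i$ of $\Delta$; the mean value theorem gives $A_{ij} = (d_{y_{ij}} s_j)e_i$ for some $y_{ij} \in \Delta$. Both the $C^0$- and $C^1$-bounds then follow from Taylor's approximation together with the fact that any two derivatives $d_{y_1} s, d_{y_2} s$ with $y_1,y_2 \in \Delta$ differ by $O(\rmax(\Delta))$. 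The only input is $\rmax(\Delta) = O(2^{-\ell})$; neither $\maxcoeff$ nor the model simplices are invoked.

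You instead reparametrize each small simplex $\Delta \in K_\ell$ to the standard $\Delta^m$ via $T_\Delta$, do the Taylor estimate there, and transform back at the cost of $\|T_\Delta^{-1}\| = m\,\maxcoeff(\Delta)$. This is fine, but it makes the $C^1$-estimate depend on the shape control $\rmax \cdot \maxcoeff = O(1)$ from \cref{lem:rmaxminBound}, which is a property specific to crystalline subdivision. The paper explicitly remarks after its proof that the argument works for \emph{any} subdivision scheme with $\rmax \to 0$; your version does not. So both proofs are valid, but the paper's is slightly more elementary and strictly more general. (Also, your mention of the constant $c$ ``depending only on the finite list of model simplices'' is a harmless slip: once you have pulled back to $\Delta^m$, $c$ depends only on $m$.)
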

We recall from \cref{ssec:PSmaps} that the $C^0$ and $C^1$-distances used here are independent of $\ell$; they are computed using a fixed collection of metrics on jet spaces over the simplices of $K$.

\begin{proof} 
It suffices to prove the claim in the case where $K$ consists of one top-dimensional linear simplex, because we can repeat the following argument for every simplex, invoking finiteness of $K$. Moreover, after choosing an ordering on $K$, we can assume that we are working with the standard simplex $\Delta^m \subset \R^m$ up to an affine change of coordinates. We write $\Hom(\R^m,\R^n)$ for the space of linear maps $\R^m \rightarrow \R^n$ and we endow it with its standard Euclidean distance.

Consider an $m$-simplex $\Delta \in (\Delta^m)_\ell$. Let $\SD\subset \Hom(\R^m,\R^n)$ be the subspace of all derivatives $d_y s: \R^m \to \R^n$ with $y \in \Delta$. Then by Taylor's approximation formula, it holds that the maximum distance $d_\mathrm{max}$ between two elements in $\SD$ is $O(\rmax(\Delta))$. This maximum exists because $\Delta$ and hence $\SD$ is compact. We note that we obtain in particular for all $i \in \{1,\dots,m\}$ and $y_1,y_2 \in \Delta$ that
\[|(d_{y_1}s_j) e_i - (d_{y_2}s_j) e_i | \leq |(d_{y_1}s_j)  - (d_{y_2}s_j)  | \leq d_\mathrm{max}. \]

We also fix a vertex $v \in \Delta$ and $e_1,\dots e_m \in \R^m$ such that the other vertices of $\Delta$ are $\{ v + e_i \}_i$. Then $s_\ell^\lin|_\Delta$ is the map that sends $x\in \Delta$ to $s(v) + A(x-v)$ where $A$ is the matrix with columns $s(v+e_i)-s(v)$. By the mean value theorem there exist elements $y_{ij} \in \Delta$ such that $A_{ij} = (d_{y_{ij}}s_j) e_i$. Hence we see that for the $j$th row of $A$ we obtain $A_j x = (d_v s_j) x + O(d_\mathrm{max}|x|)$ for any $x\in \R^m$.

For the $C^0$-distance between $s_\ell^\lin$ and $s$ we see that for their $j$th components we have 
\begin{align*}
    \dist0 (s_j(x),(s_\ell^\lin)_j(x)) & = |s_j(x) -s_j(v) -A_j(x-v)| \\
    &\leq |s_j(x) - s_j(v) - (d_v s_j)(x-v) | + O(d_\mathrm{max}|x-v|)    
\end{align*}
which we bound as $O(\rmax^2(\Delta))$, applying Taylor's approximation to the first term.

For the $C^1$-distance, we consider the matrices $d_x s$ and $d_x s^\lin_\ell$ for some $x \in \Delta$, where we point out that the latter is in fact the matrix $A$ from earlier. The difference between their $(ij)$th elements is $(d_x s_j - d_{y_{ij}}s_j) e_i$, which we bound using Taylor's approximation by $O(|x-y_{ij}|) = O(\rmax(\Delta))$.

We now conclude the proof by recalling from \cref{lem:rmaxminBound} that $\rmax(\Delta) = O(2^{-\ell})$.
\end{proof}
We observe that the proof presented does not rely specifically on the fact that we are subdividing in a crystalline fashion. The only property it uses is that the diameter $\rmax$ of the simplices goes to zero as $\ell$ goes to infinity from \cref{lem:rmaxminBound}.

\subsection{Modifying maps on a face} \label{ssec:relativeLinearization}

As explained above, we linearize in order to work locally in a linear manner. This means that we need to explain how our local arguments globalize. To this end, we introduce an interpolation procedure between sections.
\begin{definition} \label{def:interpol}
Let $\Delta$ be a linear simplex given as the join of two opposing faces $A$ and $B$. Let $t: \Delta \to [0,1]$ be the join parameter: the affine function that is $0$ over $A$ and $1$ over $B$. Suppose that $s_A,s_B : \Delta \to \R^n$ are two smooth maps. We define the \textbf{interpolation} of $s_A$ and $s_B$ over $\Delta$ with respect to $A$ and $B$, denoted as $\interpolate{\Delta}{A,B}{s_A}{s_B}$, to be the map $x \mapsto t(x) s_A(x) + (1-t(x)) s_B(x)$.
\end{definition}

The result of interpolation between sections is controlled by the shape of the simplex and the difference between the two sections that serve as input:
\begin{lemma}\label{lem:interpol}
Fix a linear $m$-simplex $\Delta$ spanned by two opposing faces $A, B$. Consider moreover three maps $u_1,u_2: A \rightarrow \R^n$, $v: B \to \R^n$, and the interpolations $s_i = \interpolate{\Delta}{A,B}{u_i}{v}$. Then the following bounds hold:
\begin{itemize}
	\item $\dist{0}(s_1,s_2) = O(\dist0 (u_1,u_2))$, and
	\item $\dist{1}(s_1,s_2) = \dist{0}(s_1,s_2) + O\left(\frac{\dist0 (u_1,u_2)}{\rmin(\Delta)} + \dist1 (u_1,u_2) \right)$.
\end{itemize}
\end{lemma}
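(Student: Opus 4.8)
The interpolation $s_i = \interpolate{\Delta}{A,B}{u_i}{v}$ is, by \Cref{def:interpol}, the map $x \mapsto t(x) u_i(x) + (1-t(x)) v(x)$, where $t : \Delta \to [0,1]$ is the affine join parameter. Subtracting, the difference is $s_1(x) - s_2(x) = t(x)\bigl(u_1(x) - u_2(x)\bigr)$, so the $v$-term cancels entirely. The plan is to estimate this difference and its first derivative directly, extracting the shape dependence from the gradient of $t$.

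\textbf{The $C^0$-bound.} Since $0 \le t(x) \le 1$ on $\Delta$, we immediately get $|s_1(x) - s_2(x)| = t(x)\,|u_1(x) - u_2(x)| \le |u_1(x) - u_2(x)| \le \dist0(u_1,u_2)$, giving the first bullet. (Strictly, $u_i$ is extended to all of $\Delta$ — e.g.\ by composing with the affine retraction $\Delta \to A$ parallel to $B$ — and $\dist0$ is understood accordingly; this extension is distance-preserving at the $C^0$ level.)

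\textbf{The $C^1$-bound.} Differentiate the product: $\d_x(s_1 - s_2) = t(x)\,\d_x(u_1 - u_2) + \bigl(u_1(x) - u_2(x)\bigr)\otimes \d t$, where $\d t$ is the constant covector associated to the affine function $t$. The first term is bounded in norm by $\dist1(u_1,u_2)$. For the second term, the key point is $|\d t| = O(1/\rmin(\Delta))$: indeed $t$ is the affine function equal to $0$ on $\aspan(A)$ and $1$ on $\aspan(B)$, and the distance between these parallel-ish faces in the relevant direction is comparable to $\rmin(\Delta)$ (the minimal vertex-to-opposite-face distance), so $\|\d t\| = O(1/\rmin(\Delta))$. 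Hence the second term is $O\bigl(\dist0(u_1,u_2)/\rmin(\Delta)\bigr)$. Combining, $\dist1(s_1,s_2) \le \dist0(s_1,s_2) + O\bigl(\dist0(u_1,u_2)/\rmin(\Delta) + \dist1(u_1,u_2)\bigr)$, where the leading $\dist0(s_1,s_2)$ accounts for the $C^0$-part of the $C^1$-norm (and is itself $O(\dist0(u_1,u_2))$ by the first bullet, but the statement as written keeps it explicit). This is exactly the second bullet.

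\textbf{Main obstacle.} The only genuinely non-routine point is pinning down $\|\d t\| = O(1/\rmin(\Delta))$, i.e.\ relating the gradient of the join coordinate to the shape quantity $\rmin$ from \Cref{def:rminmax}. Concretely: if $v$ is a vertex of $B$ and $F$ is the opposite face (spanning $A$ together with the other vertices of $B$), then $t$ restricted to the segment from $F$ to $v$ rises from a value $\le 1$ at $F$ to $1$ at $v$ over a distance $\ge d(v,\aspan(F)) \ge \rmin(\Delta)$; after checking that the level sets of $t$ are the affine hyperplanes parallel to $\aspan(\join{A}{(\partial B)})$ — or, more carefully, tracking which direction realizes the gradient — one gets the bound. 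This is elementary affine geometry but does require choosing the right vertex/face pair; everything else is immediate from the product rule and $0 \le t \le 1$.
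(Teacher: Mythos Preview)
Your proof is correct and follows essentially the same route as the paper: both subtract to get $s_1 - s_2 = t(u_1 - u_2)$, use $0 \le t \le 1$ for the $C^0$-bound, apply the product rule for the $C^1$-bound, and invoke $|\d t| \le 1/\rmin(\Delta)$. The paper simply asserts this last gradient bound without justification, whereas you sketch why it holds; otherwise the arguments are identical.
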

Here we can use the usual Euclidean $C^0$ and $C^1$-distances, although any other choices are equivalent up to a constant.
\begin{proof}
The statement regarding the $C^0$-distance is clear. For the $C^1$-claim we argue as follows: The join parameter $t:\Delta \rightarrow [0,1]$ is a smooth function whose partial derivatives are bounded above by $1/\rmin(\Delta)$. The interpolation sections $s_i$ can be written as $t u_i + (1-t) v$. Hence we obtain:
\begin{align*}
\left| \partial_j (s_1 -s_2)  \right|
& = \left| \partial_j \left( t \cdot u_1 - t \cdot u_2 \right) \right| \\
& \leq \left| \partial_j(t) \right| \left|u_1  - u_2  \right| +   |t|\left| \partial_j (u_1 - u_2 ) \right| \\
&\leq \frac{1}{\rmin(\Delta)}  \dist0 (u_1,u_2) + \dist1 (u_1,u_2) . \qedhere 
\end{align*}
\end{proof}

\subsubsection{The linear case}
We note that when we interpolate sections that are linear over a simplex $\Delta$, the result is in general not linear. Hence we define the following variation of \cref{def:interpol}, which only depends on the sections restricted to the faces.

\begin{definition} \label{def:join}
    Let $\Delta$ be a linear simplex given as the join of two opposing faces $A$ and $B$. Suppose that $s_A: A \to \R^n$ and $s_B : B \to \R^n$ are two affine maps. We define the \textbf{join} of $s_A$ and $s_B$ over $\Delta$, denoted as $\joins{\Delta}{A,B}{s_A}{s_B}$, to be the unique affine map $\Delta \to \R^n$ agreeing with $s_A$ over $A$ and agreeing with $s_B$ over $B$.
\end{definition}
For the join we now obtain slightly simpler bounds when perturbing, than for the interpolation in \cref{lem:interpol}: 
\begin{lemma}\label{lem:join}
Fix a linear $m$-simplex $\Delta$ spanned by two opposing faces $A, B$. Consider moreover the linear maps $v: A \to \R^n$ and $u_1,u_2: B \rightarrow \R^n$, and the corresponding joins $s_i = \joins{\Delta}{A,B}{v}{u_i}$. Then the following bounds hold:
\begin{itemize}
	\item $\dist{0}(s_1,s_2) = O(\dist0 (u_1,u_2))$, and
	\item $\dist{1}(s_1,s_2) = \dist{0}(s_1,s_2) + O \left( \dist0 (u_1,u_2) \cdot \maxcoeff(\Delta) \right)$.
\end{itemize}
\end{lemma}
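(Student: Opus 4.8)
The plan is to reduce the join to an explicit affine formula and then estimate its derivative directly, mirroring the proof of \cref{lem:interpol} but exploiting linearity to get a sharper bound. Concretely, pick an ordered parametrization of $\Delta$ as $\langle w_0,\dots,w_m\rangle$ so that $A = \langle w_0,\dots,w_k\rangle$ and $B = \langle w_{k+1},\dots,w_m\rangle$ are the two opposing faces. Since the join $s_i = \joins{\Delta}{A,B}{v}{u_i}$ is the \emph{unique} affine map agreeing with $v$ on $A$ and with $u_i$ on $B$, it is determined by its values on the vertices: $s_i(w_j) = v(w_j)$ for $j \leq k$ and $s_i(w_j) = u_i(w_j)$ for $j > k$. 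The difference $s_1 - s_2$ is then the affine map that vanishes on all vertices of $A$ and equals $u_1 - u_2$ on the vertices of $B$.

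The $C^0$-bound is immediate: an affine map on a simplex attains its extreme values at vertices, so $\dist0(s_1,s_2) = \max_{j>k}|u_1(w_j) - u_2(w_j)| \leq \dist0(u_1,u_2)$, which is $O(\dist0(u_1,u_2))$ as claimed. For the $C^1$-bound, write $s_1 - s_2 = c + L(\cdot - w_0)$ where $c = (s_1-s_2)(w_0) = 0$ and $L \in \Hom(\R^m,\R^n)$ is the (constant) linear part; note that we are implicitly identifying the affine plane $\aspan(\Delta)$ with $\R^m$ via $T_\Delta$ from \cref{def:SimplexToLinMap}. The $C^1$-distance is $\dist0(s_1,s_2)$ plus the operator norm of $L$, so it remains to bound $\|L\|$. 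Since $(s_1-s_2)(w_j - w_0) = L(w_j - w_0)$ and this vanishes for $j \leq k$ while equalling $(u_1-u_2)(w_j) - (u_1-u_2)(w_0)$, we have explicit control of $L$ on the frame $\{w_j - w_0\}_{j=1}^m$; each such value has norm at most $2\dist0(u_1,u_2)$. Expanding an arbitrary unit vector in this frame introduces coefficients bounded by $\maxcoeff(\Delta)$ (by \cref{def:maxcoeff}), exactly as in the second bullet of \cref{lem:rminmaxOperator}, so $\|L\| = O(\dist0(u_1,u_2) \cdot \maxcoeff(\Delta) \cdot m)$; absorbing $m$ into the constant gives the desired estimate.

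I expect the main obstacle to be bookkeeping rather than anything deep: one must be careful that the relevant bound is governed by $\maxcoeff$ of the full simplex $\Delta$ (not of $A$ or $B$ separately), and that the linear part $L$ really is controlled by the values on the $w_j - w_0$ even though $L$ is not supported on $B$ — the point is that $L$ vanishes on $\lspan(A - w_0)$ and is prescribed on a complementary set of directions, so writing a general vector as a $\maxcoeff(\Delta)$-bounded combination of $\{w_j-w_0\}_{j=1}^m$ and applying $L$ termwise suffices. A small subtlety worth flagging is the equivalence of the Euclidean $C^0,C^1$-distances used here with the metric-dependent ones of \cref{def:CrTopPolyhedron}, which holds up to a constant as remarked after the statement, so no generality is lost. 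One could alternatively derive this from \cref{lem:interpol} by replacing interpolation with the join and noting that the join parameter is absorbed, but the direct vertex computation is cleaner and makes the appearance of $\maxcoeff$ transparent.
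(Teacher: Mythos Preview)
Your proof is correct and follows essentially the same approach as the paper: both compute the linear part of $s_1-s_2$ on the vertex-difference frame $\{w_j-w_0\}$ (where it vanishes on the $A$-directions and is bounded by $\dist0(u_1,u_2)$ on the $B$-directions) and then invoke $\maxcoeff(\Delta)$ to pass to arbitrary unit vectors --- the paper just packages this last step as a factorization $s_i = \tilde s_i \circ T_\Delta^{-1}$ together with the bound $\|T_\Delta^{-1}\| = O(\maxcoeff(\Delta))$ from \cref{lem:rminmaxOperator}. One small slip: the expression ``$(u_1-u_2)(w_j) - (u_1-u_2)(w_0)$'' is ill-formed since $w_0\in A$ is not in the domain of $u_i$; the correct value is simply $u_1(w_j)-u_2(w_j)$ (because $(s_1-s_2)(w_0)=0$), so your bound $2\dist0(u_1,u_2)$ can in fact be tightened to $\dist0(u_1,u_2)$.
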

\begin{proof}
    The statement regarding the $C^0$-distance is again clear, so we only argue the $C^1$-claim. We label the vertices of $\Delta$ by $v_0,\dots,v_m$, and assume without loss of generality that $v_0=0$. Additionally we assume that $A$ is spanned by $v_0,\dots,v_\ell$ for some $\ell < m$.
    
    We define affine sections $\tilde s_i : \Delta^m \to \R^n$ by requiring that $\tilde s_i(0)=s_i(0)$ and $\tilde s_i(e_i) = s_i(v_i)$, where the vectors $e_i$ are the unit vectors in $\R^m$. If we let $T$ be the linear transformation $\Delta \to \Delta^m$ sending $v_i$ to $e_i$, we see that $s_i = \tilde s_i \circ T$. We recall from \cref{lem:rminmaxOperator} that the operator norm of $T$ is $O (\maxcoeff(\Delta))$.

    We denote the derivative of $\tilde s_i$ by the matrix $D_i$. The $j$th column of $D_i$ equals $v(v_j)-v(0)$ if $j \in\{1,\dots,\ell\}$ and $u_i(v_j)-v(0)$ if $j \in\{\ell+1,\dots,m\}$. The $j$th column of the matrix $D_1-D_2$ consists of zero for the first $\ell$ columns, and $u_1(v_j)-u_2(v_j)$ if $j \in\{\ell+1,\dots,m\}$. Hence the difference $|D_1-D_2|$ can be bounded as $O(\dist0 (u_1,u_2))$.

    We now revert our attention back to the $s_i$ and obtain:
    \begin{align*}
        \dist0 (ds_1,ds_2) &= O \left(\lVert D_1 T - D_2 T \rVert\right) \\
         &\leq O \left(\lVert D_1 -D_2\rVert \lVert T \rVert\right) 
        = O\left (\dist0 (u_1,u_2) \cdot \maxcoeff(\Delta) \right ). \qedhere
    \end{align*}
\end{proof}

\subsection{Relative linearization}

We now put together \cref{lem:interpol,lem:linSecDistEucl} in order to yield a linearization statement that applies over a subcomplex, relative to the complement of small neighborhood. We point out that in the proof below, it is the niceness of the subcomplexes (recall \cref{def:niceSubcomplex}), that allows us on the ring of the nice subcomplexes to interpolate between sections.
\begin{corollary}\label{cor:linSecDist}
Let $K$ be a simplicial complex in $\R^N$. Let $s: |K| \to \R^n$ be a map that is piecewise smooth with respect to $K$. Let $K' \subset K$ be a finite subcomplex and consider a neighborhood $\Op(|K'|)$.

Then, for each sufficiently large $\ell$ there exists a map $s_\ell': |K| \to \R^n$ that is piecewise smooth with respect to $K_\ell$ and moreover:
	\begin{itemize}
		\item $s_\ell'$ is piecewise linear over $|K'|$,
		\item $s_\ell' = s$ outside of $\Op(|K'|)$,
		\item $\dist{0}(s_\ell',s) = O(2^{-2\ell})$, and
		\item $\dist{1}(s_\ell',s) = O(2^{-\ell})$.
	\end{itemize}
\end{corollary}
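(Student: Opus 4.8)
The idea is to combine the global linearization of Lemma~\ref{lem:linSecDistEucl} (valid over all of $|K'|$, or rather over the relevant finite part of $K$) with the interpolation estimates of Lemma~\ref{lem:interpol}, using a nice subcomplex squeezed between $|K'|$ and $\Op(|K'|)$ to glue the linearized section to the original one. Since $K'$ is finite, I can choose a convex-open cover of a bounded region containing $|K'|$ and, by Lemma~\ref{lem:niceCover}, after passing to a sufficiently high crystalline subdivision $K_\ell$ obtain a nice subcomplex $K''_\ell \subset K_\ell$ with $|K'| \subset |K''_\ell| \subset \Op(|K'|)$; more simply, one can take $K''_\ell = K_\ell \cap U$ for a single convex open $U$ with $|K'|\subset U \subset \Op(|K'|)$, which is nice by Lemma~\ref{lem:niceSubcomplex}, and which contains $|K'|$ once $\ell$ is large enough that all simplices of $K_\ell$ meeting $|K'|$ are small. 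Enlarging $K''_\ell$ slightly if necessary, I can also arrange that $K'$ itself is a subcomplex of $K''_\ell$ (equivalently, work with $K''_\ell = K_\ell \cap U$ where $U$ is chosen to realize $|K'|$ exactly on the nose up to the subdivision).

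\textbf{The construction.} Define $s^{\lin}_\ell$ to be the linearization of $s$ with respect to $K_\ell$ (Definition~\ref{def:linearization}). Over $|K''_\ell|$ we want to use $s^{\lin}_\ell$, and over $|K| \setminus \str(K''_\ell)$ we want to keep $s$; on the ring $\ringT[K_\ell]{K''_\ell}$ we interpolate. By niceness, every simplex $\Delta \in \str(K''_\ell)$ is the join of a face $A = \Delta \cap K''_\ell$ lying in the nice subcomplex and the opposite face $B$, which is disjoint from $K''_\ell$. On such a $\Delta$, set $s'_\ell|_\Delta = \interpolate{\Delta}{A,B}{s^{\lin}_\ell}{s}$, where on $A$ we feed in $s^{\lin}_\ell$ and on $B$ we feed in $s$ (both restricted to $\Delta$ as smooth maps on $\aspan(\Delta)$). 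Over simplices contained in $|K''_\ell|$, set $s'_\ell = s^{\lin}_\ell$; over simplices in $|K| \setminus |\str(K''_\ell)|$, set $s'_\ell = s$. One checks these definitions agree on overlaps (on a face of $K''_\ell$, the interpolation value is $s^{\lin}_\ell$; on a face disjoint from $K''_\ell$, it is $s$, matching the outside), so $s'_\ell$ is a well-defined piecewise smooth map with respect to $K_\ell$. It is piecewise linear over $|K'| \subset |K''_\ell|$ since there it equals $s^{\lin}_\ell$, and it equals $s$ outside $\Op(|K'|)$ since $\str(K''_\ell) \subset \Op(|K'|)$ for $\ell$ large.

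\textbf{The estimates.} For the $C^0$ and $C^1$ bounds: on simplices where $s'_\ell = s^{\lin}_\ell$ or $s'_\ell = s$ the bounds are immediate (trivially, or from Lemma~\ref{lem:linSecDistEucl}). On the ring simplices, I apply Lemma~\ref{lem:interpol} with $u_1 = s^{\lin}_\ell|_A$, $u_2 = s|_A$, and $v = s|_B$: then $s_1 = s'_\ell|_\Delta$ and $s_2 = \interpolate{\Delta}{A,B}{s}{s} = s|_\Delta$, giving
\begin{align*}
\dist{0}(s'_\ell,s) &= O(\dist{0}(s^{\lin}_\ell,s)) = O(2^{-2\ell}), \\
\dist{1}(s'_\ell,s) &= \dist{0}(s'_\ell,s) + O\!\left(\frac{\dist{0}(s^{\lin}_\ell,s)}{\rmin(\Delta)} + \dist{1}(s^{\lin}_\ell,s)\right).
\end{align*}
By Lemma~\ref{lem:rmaxminBound} the crystalline simplices satisfy $\rmin(\Delta) = \Theta(2^{-\ell})$, so the middle term is $O(2^{-2\ell}/2^{-\ell}) = O(2^{-\ell})$, and together with $\dist{1}(s^{\lin}_\ell,s) = O(2^{-\ell})$ from Lemma~\ref{lem:linSecDistEucl} we get $\dist{1}(s'_\ell,s) = O(2^{-\ell})$, as claimed. (Since $K'$ is finite and all estimates are local to simplices meeting a fixed bounded region, the implied constants are uniform.)

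\textbf{Main obstacle.} The genuinely delicate point is bookkeeping around the gluing: one must verify that the nice subcomplex $K''_\ell$ can simultaneously (i) contain $|K'|$ as a genuine subcomplex, (ii) sit inside the prescribed neighborhood $\Op(|K'|)$ together with its star, and (iii) be nice, all for the \emph{same} large $\ell$ — and that the interpolation on the ring is compatible across shared faces so that the pieces assemble into a continuous (hence piecewise smooth) map. Everything else is a direct substitution into Lemmas~\ref{lem:linSecDistEucl}, \ref{lem:interpol}, and \ref{lem:rmaxminBound}; the crucial cancellation is that the $O(2^{-2\ell})$ gain in $C^0$ from linearization exactly compensates the $O(2^{\ell})$ blow-up of $1/\rmin$ on the thin ring simplices, leaving the expected $C^1$-rate $O(2^{-\ell})$.
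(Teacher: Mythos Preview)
Your overall strategy — linearize on the inside, interpolate across a ring, leave $s$ untouched outside — is exactly the paper's, and your use of Lemmas~\ref{lem:linSecDistEucl}, \ref{lem:interpol}, and \ref{lem:rmaxminBound} to obtain the $C^0$ and $C^1$ rates is correct. In particular the key cancellation $\dist{0}(s^{\lin}_\ell,s)/\rmin(\Delta) = O(2^{-2\ell}\cdot 2^{\ell}) = O(2^{-\ell})$ is the right observation.

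The gap is precisely in the step you flag as the ``main obstacle'': producing a \emph{single} nice subcomplex $K''_\ell$ with $|K'|\subset|K''_\ell|$ and $|\str(K''_\ell)|\subset\Op(|K'|)$. Neither of your two proposals works. First, a single convex open $U$ with $|K'|\subset U\subset\Op(|K'|)$ need not exist: take $|K'|$ to be the union of two edges meeting at a right angle and $\Op(|K'|)$ a thin tubular neighborhood; any convex set containing $|K'|$ then contains the full triangle spanned by the endpoints, which escapes $\Op(|K'|)$. Second, Lemma~\ref{lem:niceCover} only produces a \emph{cover} $\{K_\ell\cap U_i\}$ by nice subcomplexes, and the union of nice subcomplexes is in general not nice (near the ``re-entrant corner'' of $U_1\cup U_2$ one finds simplices with one vertex in $U_1\setminus U_2$ and another in $U_2\setminus U_1$ whose connecting edge lies in neither, violating the criterion of Lemma~\ref{lem:nicenessCriterion}).

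The paper handles this by \emph{not} looking for a single nice subcomplex: it chooses finitely many convex opens $U_1,\dots,U_I$ whose union (intersected with $|K|$) is sandwiched between $|K'|$ and $\Op(|K'|)$, and then runs your construction \emph{inductively} over $i$, at each stage linearizing $s_{i-1}$ over the nice subcomplex $K_\ell\cap U_i$ and interpolating on its ring. Since $I$ is finite the accumulated error is still $O(2^{-2\ell})$ and $O(2^{-\ell})$. An alternative one-shot fix in the spirit of your write-up is to take $K''_\ell$ to be the \emph{full} subcomplex of $K_\ell$ on the vertices lying in a closed set $C$ with $|K'|\subset\Int(C)\subset C\subset\Op(|K'|)$; full subcomplexes are automatically nice by Lemma~\ref{lem:nicenessCriterion}, and for $\ell$ large both containments $|K'|\subset|K''_\ell|$ and $|\str(K''_\ell)|\subset\Op(|K'|)$ hold.
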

\begin{proof}
Given an open neighborhood $\Op(|K'|)$ we can find a finite collection of convex and open subsets $\{U_i\}_{i=1}^{I}$ of $\R^N$ so that $\cup_i U_i $ intersected with $ |K|$ is a neighborhood of $|K'|$ contained in $\Op(|K'|)$. We fix such a collection and assume that $\ell$ is large enough so that each simplex in $K_\ell$ is either contained in one of the $U_i$ or is disjoint from $|K'|$. By \cref{lem:niceSubcomplex} it follows that each subcomplex $K^i \coloneqq U_i \cap K$ is nice.

We now construct maps $s_i : |K| \to \R^n$ by induction on $i$ which each satisfy all of the requirements of the statement, as long as we restrict $K'$ to $\cup_{j=0}^{i} U_j \cap K'$. We set $s_0 := s$, so that the base case $i=1$ follows the same proof as the inductive step. Hence assume $s_{i-1}$ is constructed. We use \cref{lem:linSecDistEucl} to linearize $s_{i-1}$ over $|K^i|$, yielding some map $\overline{s}_i : |K^i| \to \R^n$. Over the ring of $K^i$ we interpolate between $\overline{s}_i$ and $s_{i-1}$; we call this interpolation $s_i$ and bound its difference to $s_{i-1}$ over the ring of $K^i$ using \cref{lem:interpol}. We define $s'_\ell$ as the end result of the induction, that is $s'_\ell = s_I$.

We observe that the map $s'_\ell$ is piecewise smooth: Smoothness on each simplex is clear from the interpolation formula. Continuity over $K$ follows because the join parameter $t$ is continuous over the star of each $K^i$ and the interpolation over two adjacent simplices agrees on their intersection.

To conclude we observe that, at each stage $i$, the claimed bounds hold for $s_i$ by combining \cref{lem:linSecDistEucl} (on $K^j$) and \cref{lem:interpol} (on $\str(K^i)$). As such, the bounds apply to $s'_\ell$, completing the proof.
\end{proof}
We will refer to $s_\ell'$ from \cref{cor:linSecDist} as a linearization of $s$ over $|K'|$.

\section{Slope perturbations of piecewise linear maps} \label{sec:perturbing}

Let $E \rightarrow M$ be a fiber bundle over a smooth triangulated manifold $M$. We are going to explain next the idea of perturbing a section $s: M \rightarrow E$ by tilting it slightly simplex by simplex. Doing this with some quantitative control is the other main ingredient behind jiggling (the first ingredient being crystalline subdivision).

First, in \cref{ssec:extending}, we discuss how to approximate a $1$-jet $\sigma \in \pi_f^{-1}(x) \subset J^1(E)$ by a locally defined holonomic solution of a given relation $\SR \subset J^1(E)$, where we recall that $\pi_f: J^1(E)\to E$ is the front projection. In order to do this, we need the relation to be fiberwise dense (\cref{def:fiberweiseDense}) and open.

This idea is then applied in \cref{ssec:perturbing} to construct a solution over an individual simplex. This solution must then be joined to the unperturbed map away from the simplex. This will allow us to introduce such tilts colorwise, in an estimated manner.

Even though our ultimate goal is to make statements about sections of bundles over manifolds, we continue working (mostly) in the linear setting. We will explain in detail in \cref{sec:jiggling} how one makes the transition to sections defined on manifolds, but the main idea is to work ``chart by chart''.

\subsection{Linear extensions of jets} \label{ssec:extending}

To approximate a $1$-jet $\sigma \in \pi_f^{-1}(x) \subset J^1(E)$ by a local solution of $\SR$, we use a two step process. We first change the slope of $\sigma$ such that it lies in the relation, then we extend it to a holonomic section of $\SR$. We emphasize that in the first step we do not change the zeroth order information of the jet, only its slope, so that the jet remains in the fiber $\pi_f^{-1}(x)$.

\subsubsection{Density of solution jets}

We now fix metrics on $M$, $E$, and $J^1(E)$. We use the notation $B_f(x,r)$ for a ball around $x \in J^1(E)$ of radius $r$ intersected with the $\pif$-fiber through $x$, that is $B_f(x,r) = B(x,r) \cap \pif^{-1}(\pif(x))$. The two requirements we pose on $\SR$ immediately imply the following, which realizes the first step:
\begin{lemma} \label{lem:sigmaBall}
Let $\SR \subset J^1(E)$ be an open and fiberwise dense relation. Then, for all $\epsilon>0$ and $\sigma \in J^1(E)$, there exists $\sigma'\in B_f(\sigma, \epsilon)$ and $\delta>0$ such that $B(\sigma',\delta) \subset \SR$.
\end{lemma}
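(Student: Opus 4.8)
The statement is essentially immediate from the two hypotheses, so the plan is just to unwind them in the right order. First I would apply fiberwise density. Let $x = \pif(\sigma)$. By \Cref{def:fiberweiseDense}, the set $\SR \cap \pif^{-1}(x)$ is dense in the fiber $\pif^{-1}(x)$. Since $\sigma$ lies in this fiber and $B_f(\sigma,\epsilon) = B(\sigma,\epsilon) \cap \pif^{-1}(x)$ is a nonempty open neighborhood of $\sigma$ inside that fiber, density guarantees a point $\sigma' \in B_f(\sigma,\epsilon)$ with $\sigma' \in \SR$.

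Next I would invoke openness. Since $\SR$ is open in $J^1(E)$ and $\sigma' \in \SR$, there is a radius $\delta > 0$ with $B(\sigma',\delta) \subset \SR$, where the ball is taken in the fixed metric on $J^1(E)$. This produces exactly the $\sigma'$ and $\delta$ claimed.

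The only mild subtlety worth spelling out is that the fiber $\pif^{-1}(x)$ carries the subspace topology from $J^1(E)$, and the metric balls $B_f(\sigma,\epsilon)$ form a neighborhood basis of $\sigma$ in that subspace; so ``dense'' in the sense of \Cref{def:fiberweiseDense} is literally the statement that every such $B_f(\sigma,\epsilon)$ meets $\SR$. With that observation there is no real obstacle: the lemma is a two-line consequence of density (to get $\sigma' \in \SR$ close to $\sigma$ in its fiber) followed by openness (to get a full ball around $\sigma'$ in $\SR$). I would write it in essentially that form, perhaps noting that one may shrink $\delta$ so that $\delta \le \epsilon$ if convenient later, though the statement as given does not require it.
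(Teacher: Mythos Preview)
Your proposal is correct and matches the paper's approach: the paper treats this lemma as an immediate consequence of the two hypotheses (fiberwise density to find $\sigma' \in \SR$ in $B_f(\sigma,\epsilon)$, then openness to get the ball $B(\sigma',\delta) \subset \SR$) and does not write out a proof beyond that remark.
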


In fact, if we restrict ourselves to $\sigma$ in a compact set and use that $\delta$ depends continuously on $\sigma$, we
see that $\delta$ can be chosen such that it does no longer depend on $\sigma$. This leads to the following:
\begin{corollary} \label{cor:ballLinExtAff}
Let $\SR \subset J^1(E)$ be an open and fiberwise dense relation and let $A \subset J^1(E)$ be a compact subset. For all $\epsilon>0$ there exists $\delta>0$ such that: For every $\sigma \in A$, there exists $\sigma'\in B_f(\sigma, \epsilon)$ with $B(\sigma',\delta) \subset \SR$.
\end{corollary}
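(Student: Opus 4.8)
The plan is to deduce this from \Cref{lem:sigmaBall} by a standard finite–subcover argument. The only subtlety is that the perturbed jet $\sigma'$ must lie in the \emph{same} $\pif$-fibre as $\sigma$, so one cannot literally reuse a single $\sigma'$ (and its radius $\delta$) for nearby base points; one has to transport the slope perturbation across neighbouring fibres, and the main work is checking that this can be done without leaving $\SR$ or the $\epsilon$-ball.

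Fix $\epsilon>0$. For each $\sigma\in A$, apply \Cref{lem:sigmaBall} with $\epsilon/2$ in place of $\epsilon$ to obtain $\sigma'_\sigma\in B_f(\sigma,\epsilon/2)$ and $\delta_\sigma>0$ with $B(\sigma'_\sigma,4\delta_\sigma)\subset\SR$ (shrink $\delta_\sigma$ if necessary). I claim $\delta_\sigma$ works uniformly on a neighbourhood of $\sigma$. Since $\pif$ is a submersion, choose coordinates on a neighbourhood $\SU_\sigma$ of $\sigma$ in $J^1(E)$ in which $\pif$ is projection onto the first (base) factor, and such that the ambient metric on $J^1(E)$ is as close to the Euclidean one as we like. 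Write $\sigma=(x,p)$ and $\sigma'_\sigma=(x,p')$ in these coordinates (same $x$, as they share a fibre). For $\tau=(y,q)\in\SU_\sigma$, set $\tau':=(y,q+p'-p)$; then $\pif(\tau')=\pif(\tau)$, and in coordinates $\tau'-\tau=\sigma'_\sigma-\sigma$ and $\tau'-\sigma'_\sigma=\tau-\sigma$. Hence, taking $\SV_\sigma\subset\SU_\sigma$ a sufficiently small ball around $\sigma$ and the metric distortion small enough, every $\tau\in\SV_\sigma$ satisfies $d(\tau',\tau)<\epsilon$ and $d(\tau',\sigma'_\sigma)<2\delta_\sigma$; the latter gives $B(\tau',\delta_\sigma)\subset B(\sigma'_\sigma,4\delta_\sigma)\subset\SR$. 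So for every $\tau\in\SV_\sigma$ the point $\tau'$ witnesses the desired conclusion with radius $\delta_\sigma$.

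Since $A$ is compact, finitely many $\SV_{\sigma_1},\dots,\SV_{\sigma_k}$ cover $A$; set $\delta:=\min_i\delta_{\sigma_i}>0$. Any $\sigma\in A$ lies in some $\SV_{\sigma_i}$, which supplies $\sigma'\in B_f(\sigma,\epsilon)$ with $B(\sigma',\delta_{\sigma_i})\subset\SR$, hence $B(\sigma',\delta)\subset\SR$, as required. The only real obstacle is the middle paragraph: ensuring the transported jet $\tau'$ stays in $\SR$ (absorbed by the uniform room $4\delta_\sigma$, using openness of $\SR$) and within $\epsilon$ of $\tau$ (absorbed by the $\epsilon/2$ slack and continuity of the trivialisation); the rest is a routine compactness argument.
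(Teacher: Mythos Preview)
Your argument is correct and follows the same compactness strategy that the paper sketches in the sentence preceding the corollary (``$\delta$ depends continuously on $\sigma$''). You have simply made rigorous what the paper leaves implicit: the fibre constraint on $\sigma'$ prevents literally reusing a single witness across a neighbourhood, and your transport-via-local-trivialisation is exactly the standard way to fill that gap.
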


\subsubsection{Density of linear extensions}

To extend the jets in $B(\sigma',\delta)$ to holonomic sections of $\SR$, we make use of linear extensions. In order to make sense of linear, we have to assume here that $M = \R^m$ and $E$ is the trivial bundle $\trivbr$.
\begin{definition}
Let $U \subset \R^m$ be a subset. We say that $s: U \to \R^n$ is the \textbf{linear extension} of $\sigma \in J^1(\R^m,\R^n)$ if $j^1 s (\pi(\sigma)) = \sigma$ and $s$ is linear. We write in this case $s = \lext(\sigma)$ or, when we want to emphasize the domain, $ s= \lextD[U](\sigma)$.
\end{definition}

With this definition we see that \cref{lem:sigmaBall} tells us that linear extensions of the jets in $B(\sigma',\delta)$ are actually local solutions. Here we write $\pi$ for the projection $J^1(\R^m,\R^n) \to \R^m$.
\begin{corollary} \label{lem:ballLinExt}
Let $\SR \subset J^1(\R^m,\R^n)$ be an open and fiberwise dense relation, and let $\epsilon>0$ and $\sigma \in J^1(\R^m,\R^n)$ be given. Then there exists $\sigma' \in B_f(\sigma, \epsilon)$ and $\delta >0$ such that for all $\sigma'' \in B(\sigma', \delta)$ we have that $\lext (\sigma'')$ is a solution of $\SR$ over $B(\pi (\sigma''), \delta) \subset \R^m$.
\end{corollary}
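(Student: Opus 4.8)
The plan is to upgrade Lemma~\ref{lem:sigmaBall} to a statement about linear extensions by a routine continuity argument. First I would apply Lemma~\ref{lem:sigmaBall} to the given $\sigma$ and $\epsilon$, obtaining a jet $\sigma' \in B_f(\sigma,\epsilon)$ together with a radius $\delta_0>0$ such that $B(\sigma',\delta_0) \subset \SR$. This $\sigma'$ is already the jet asked for in the statement, so the only remaining task is to produce a radius $\delta>0$ that works simultaneously as the size of the ball around $\sigma'$ from which $\sigma''$ is drawn and as the size of the ball in $\R^m$ over which $\lext(\sigma'')$ is required to solve $\SR$.

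The key device is the ``evaluate the linear extension'' map $\Phi : J^1(\R^m,\R^n) \times \R^m \to J^1(\R^m,\R^n)$ defined by $\Phi(\tau,x) = j^1(\lext(\tau))(x)$. In the standard affine coordinates $\tau = (p,v,L)$ on $J^1(\R^m,\R^n)$ one has $\lext(\tau)(y) = v + L(y-p)$, so $\Phi(\tau,x) = (x,\, v + L(x-p),\, L)$; in particular $\Phi$ is continuous and satisfies $\Phi(\tau,\pi(\tau)) = \tau$ for every $\tau$. Hence, for $\sigma''$ near $\sigma'$ and $x$ near $\pi(\sigma'')$, the triangle inequality gives
\[ d\bigl(\Phi(\sigma'',x),\sigma'\bigr) \;\leq\; d\bigl(\Phi(\sigma'',x),\Phi(\sigma'',\pi(\sigma''))\bigr) + d\bigl(\sigma'',\sigma'\bigr). \]

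To finish, I would bound the two terms on the right separately. The second is controlled by the radius $\delta$ of the ball $B(\sigma',\delta)$ from which we take $\sigma''$. For the first, I would invoke the uniform continuity of $\Phi$ on a fixed compact neighbourhood of $(\sigma',\pi(\sigma'))$: since the map $x \mapsto \Phi(\sigma'',x)$ takes the value $\sigma''$ at $x = \pi(\sigma'')$, shrinking the $\R^m$-ball forces $d(\Phi(\sigma'',x),\sigma'')$ below any prescribed threshold, uniformly for $\sigma''$ in that neighbourhood. Choosing $\delta$ small enough that each of the two terms is at most $\delta_0/2$ then yields $\Phi(\sigma'',x) \in B(\sigma',\delta_0) \subset \SR$ for all $\sigma'' \in B(\sigma',\delta)$ and all $x \in B(\pi(\sigma''),\delta)$; that is, $\lext(\sigma'')$ is a solution of $\SR$ over $B(\pi(\sigma''),\delta)$. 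I do not anticipate a genuine obstacle here; the only point of care is that the metric on $J^1(\R^m,\R^n)$ is an arbitrary fixed one, so the estimates should be phrased through (uniform) continuity of $\Phi$ and the openness of $\SR$ already encoded in $\delta_0$, rather than through explicit Euclidean computations (and ``linear'' should be read as ``affine'', so that $\lext(\tau)$ is defined for every $\tau$).
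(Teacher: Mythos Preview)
Your proposal is correct and follows exactly the approach the paper intends: apply Lemma~\ref{lem:sigmaBall} to obtain $\sigma'$ and a ball in $\SR$, then use continuity of the map sending $(\tau,x)$ to $j^1(\lext(\tau))(x)$ to shrink $\delta$ appropriately. The paper in fact leaves this corollary without a written proof, treating it as immediate from Lemma~\ref{lem:sigmaBall} and the definition of $\lext$; your argument simply spells out the routine continuity step.
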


As in \cref{cor:ballLinExtAff}, we now obtain:
\begin{corollary}\label{cor:ballLinExt}
Let $\SR \subset J^1(\R^m,\R^n)$ be an open and fiberwise dense relation, let $\epsilon>0$ be given and consider a compact subset $A\subset J^1(\R^m,\R^n)$. Then there exists $\delta>0$ such that: for all $\sigma \in A$ there exists $\sigma' \in B_f(\sigma, \epsilon)$ such that for all $\sigma'' \in B(\sigma', \delta)$ we have that $\lext (\sigma'')$ is a solution of $\SR$ over $B(\pi (\sigma''), \delta)$.
\end{corollary}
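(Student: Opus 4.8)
The statement \cref{cor:ballLinExt} is a compactness upgrade of \cref{lem:ballLinExt}, entirely analogous to how \cref{cor:ballLinExtAff} upgrades \cref{lem:sigmaBall}. So the plan is to derive it from \cref{lem:ballLinExt} by a standard covering argument.

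First I would invoke \cref{lem:ballLinExt} pointwise: for each $\sigma \in A$ and the given $\epsilon > 0$, we obtain $\sigma'_\sigma \in B_f(\sigma,\epsilon)$ and a radius $\delta_\sigma > 0$ such that $\lext(\sigma'')$ solves $\SR$ over $B(\pi(\sigma''),\delta_\sigma)$ for every $\sigma'' \in B(\sigma'_\sigma,\delta_\sigma)$. The next step is to shrink these radii to get an open cover with a margin: for each $\sigma$, consider the open ball $B(\sigma'_\sigma, \delta_\sigma/2)$, and note that the collection $\{\, \pif^{-1}(\pif(\sigma)) \cap B(\sigma'_\sigma,\delta_\sigma/2) \,\}$ — or more simply, since we want to cover $A$ itself and $\sigma \in \overline{B_f(\sigma'_\sigma,\epsilon)}$, the balls $\{ B(\sigma, \delta_\sigma/2)\}_{\sigma \in A}$ — form an open cover of the compact set $A$. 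Extract a finite subcover $B(\sigma_1,\delta_{\sigma_1}/2), \dots, B(\sigma_k,\delta_{\sigma_k}/2)$, and set $\delta \coloneqq \tfrac12 \min_j \delta_{\sigma_j} > 0$.

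Then I would verify the conclusion for this $\delta$. Given an arbitrary $\sigma \in A$, it lies in some $B(\sigma_j, \delta_{\sigma_j}/2)$; define $\sigma' \coloneqq \sigma'_{\sigma_j}$, the perturbed jet associated to $\sigma_j$. One must check that $\sigma' \in B_f(\sigma,\epsilon)$ — this requires $\sigma$ and $\sigma_j$ to have the same front projection, which is not automatic from $\sigma \in B(\sigma_j,\delta_{\sigma_j}/2)$. The clean fix is to take the cover of $A$ not by ordinary balls but to work fiber-by-fiber, or equivalently to arrange at the pointwise stage that $\delta_\sigma \le \epsilon$ and that $\sigma'_\sigma$ depends on $\sigma$ only through $\pif(\sigma)$; alternatively, absorb the discrepancy by choosing the pointwise $\epsilon' = \epsilon/2$ and $\delta_\sigma \le \epsilon/2$, so that if $\sigma$ and $\sigma_j$ lie in a common small ball then $d(\sigma',\sigma) \le d(\sigma',\sigma_j) + d(\sigma_j,\sigma) < \epsilon$, at the cost of $\sigma'$ not lying exactly in the $\pif$-fiber of $\sigma$ — which is acceptable only if the statement is read with $B_f$ replaced by $B$. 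The honest route, mirroring the proof of \cref{cor:ballLinExtAff} (``$\delta$ depends continuously on $\sigma$''), is: the assignment $\sigma \mapsto \delta_\sigma$ from \cref{lem:ballLinExt} can be taken lower semicontinuous (indeed, if $B(\sigma',\delta)$ works for $\sigma$, then $B(\sigma', \delta/2)$ works for all nearby jets with the same front projection), so $\inf_{A} \delta_\sigma > 0$ by compactness, and we take $\delta$ to be (half) this infimum, with $\sigma' = \sigma'_\sigma$ chosen for each $\sigma$ individually so that it genuinely lies in $B_f(\sigma,\epsilon)$.

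Finally, with $\sigma$, $\sigma'$, and $\delta$ so chosen, for any $\sigma'' \in B(\sigma',\delta) \subset B(\sigma'_{\sigma},\delta_{\sigma})$ we get from \cref{lem:ballLinExt} that $\lext(\sigma'')$ solves $\SR$ over $B(\pi(\sigma''),\delta_\sigma) \supset B(\pi(\sigma''),\delta)$, which is exactly the claim. The main obstacle — and it is genuinely minor, more a matter of bookkeeping than of mathematics — is making sure the perturbation $\sigma'$ remains in the $\pif$-fiber of the given $\sigma$ while simultaneously being uniform enough over $A$; this is resolved by the lower-semicontinuity observation above, exactly as in \cref{cor:ballLinExtAff}, rather than by a crude finite subcover that would conflate nearby fibers.
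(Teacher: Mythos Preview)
Your approach matches the paper's: the paper gives no proof beyond ``As in \cref{cor:ballLinExtAff}, we now obtain'', i.e.\ exactly the continuity/lower-semicontinuity of $\delta$ over the compact set $A$ that you settle on as the ``honest route''. One small point: your parenthetical justification of lower semicontinuity restricts to nearby jets \emph{with the same front projection}, which is not enough since $A$ spans many $\pif$-fibers; to complete it, use the product structure $J^1(\R^m,\R^n)\cong \R^m\times\R^n\times\Hom(\R^m,\R^n)$ to slide $\sigma'=(x,y,A')$ to $\sigma'_1=(x_1,y_1,A')$ in the fiber of a nearby $\sigma_1=(x_1,y_1,A_1)$, so that $\sigma'_1\in B_f(\sigma_1,\epsilon)$ and $B(\sigma'_1,\delta-d(\sigma,\sigma_1))\subset B(\sigma',\delta)$.
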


\subsection{Slope-perturbing} \label{ssec:perturbing}

We now use the linear extensions of the previous subsection to tilt maps in a piecewise manner. The upcoming statements are quantitative and use the standard $C^r$-distance given by the ambient Euclidean space the simplices (or simplicial complexes) live in. By compactness of simplices, this is equivalent to any other $C^r$-distance we could choose.

\subsubsection{Over a single simplex}

Consider a linear simplex $\Delta$. The reasoning from the previous subsection states that, given $\epsilon>0$ and a $1$-jet $\sigma_1 \in J^1(\Delta,\R^n)$, we can choose $\sigma_2 \in B_f(\sigma_1,\epsilon)$ and consider its linear extension $s_1$ over $\Delta$. Naturally, if we had chosen some other $\sigma_2 \in B_f(\sigma,\epsilon)$ instead of $\sigma_1$, we would have obtained a different linear extension $s_2$. However, the difference between $s_1$ and $s_2$ is bounded by $\epsilon$ and the size of $\Delta$. The following is then immediate.
\begin{lemma}\label{lem:perturbSlopeToPoints}
Consider
\begin{itemize}
    \item a positive number $\epsilon$,
    \item a linear simplex $\Delta \subset \R^N$,
    \item a pair of jets $\sigma_1,\sigma_2 \in J^1(\Delta,\R^n)$ satisfying $\sigma_2 \in B_f(\sigma_1,\epsilon)$, and
    \item their linear extensions $s_i= \lextD(\sigma_i)$.
\end{itemize}
Then the following bounds hold:
\begin{itemize}
    \item $\dist{0}(s_1,s_2) = O(\epsilon\rmax(\Delta))$, and
    \item $\dist{1}(s_1,s_2) = O\left(\epsilon + \epsilon\rmax(\Delta)\right)$.
\end{itemize}
\end{lemma}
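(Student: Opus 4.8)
The plan is to write the two linear extensions as explicit affine maps and compare them directly; as the authors indicate, the statement is essentially a one-line estimate, so there is no genuine obstacle, only a little bookkeeping with the various norms.

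First I would unwind the hypothesis. Since $B_f(\sigma_1,\epsilon)$ is, by definition, contained in the $\pif$-fiber through $\sigma_1$, the jets $\sigma_1$ and $\sigma_2$ have the same base point $x_0 \coloneqq \pi(\sigma_1) = \pi(\sigma_2) \in \Delta$ and the same value at $x_0$, and differ only in their slopes. Consequently their linear extensions are the affine maps $s_i(x) = s_i(x_0) + L_i(x - x_0)$ with $s_1(x_0) = s_2(x_0)$, where $L_i$ denotes the linear part, a map on $\aspan(\Delta)$ with values in $\R^n$. The space of such slopes is a fixed finite-dimensional vector space, on which all norms are equivalent, so the inclusion $\sigma_2 \in B_f(\sigma_1,\epsilon)$ gives $\lVert L_1 - L_2 \rVert = O(\epsilon)$ in operator norm.

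Then I would read off the two bounds. For the $C^0$-estimate, every $x \in \Delta$ satisfies $|x - x_0| \le \rmax(\Delta)$, since $\rmax(\Delta)$ is the diameter of $\Delta$ (the diameter of a simplex is attained between two of its vertices); hence $|s_1(x) - s_2(x)| = |(L_1 - L_2)(x - x_0)| \le \lVert L_1 - L_2 \rVert \cdot \rmax(\Delta) = O(\epsilon\,\rmax(\Delta))$, which is the first claim. For the $C^1$-estimate, the derivatives $d s_i \equiv L_i$ are constant, so $\lVert d s_1 - d s_2 \rVert = \lVert L_1 - L_2 \rVert = O(\epsilon)$; adding this to the $C^0$-bound yields $\dist{1}(s_1,s_2) = O(\epsilon + \epsilon\,\rmax(\Delta))$. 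The only mildly delicate point --- comparing the ambient Euclidean $C^r$-distance used here with any other natural choice --- is already disposed of by the compactness remark preceding the statement.
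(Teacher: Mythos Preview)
Your argument is correct and is exactly the direct computation the paper has in mind; the paper itself does not give a proof but simply declares the lemma ``immediate'' after noting that $\sigma_1$ and $\sigma_2$ share the same front projection and differ only in slope. Your unpacking of this---writing $s_i(x)=s_i(x_0)+L_i(x-x_0)$ with $s_1(x_0)=s_2(x_0)$ and $\lVert L_1-L_2\rVert=O(\epsilon)$, then bounding $|x-x_0|$ by $\rmax(\Delta)$---is the intended justification.
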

We observe that the second claim would still hold if $\sigma_1$ and $\sigma_2$ had lain in the same \emph{fiber} over $x \in M$. The first claim would not, as we then have that $\dist0(\sigma_1,\sigma_2) = O(\epsilon+\epsilon\rmax(\Delta))$.  To emphasize that $\sigma_1$ and $\sigma_2$ lie in the same \emph{front fiber}, we refer to $s_2$ as an $\epsilon$-\textbf{slope-perturbation} of $s_1$.

\subsubsection{Over a color}

We now extend this procedure to all the simplices of a given color. We package this into a definition:
\begin{definition} \label{def:perturbSlopeColor}
Let $\epsilon>0$ and let $K$ be a finite, colored, simplicial complex of pure dimension. Fix a piecewise linear map $s : |K| \to \R^n$ and a color $i$. We say that $s': |K| \to \R^n$ is the result of \textbf{slope-perturbing $s$ by $\epsilon$ over the $i$th color} if $s'$ is obtained from $s$ by
\begin{itemize}
    \item first $\epsilon$-slope-perturbing over all simplices of color $i$ (as in Lemma \ref{lem:perturbSlopeToPoints}), and 
    \item then applying the join construction (from Definition \ref{def:join}) in the star of all such simplices.
\end{itemize}
\end{definition}
We note that $s'$ is in particular piecewise linear and have depicted $s'$ in \cref{fig:slopePerturb}.

\begin{figure}[h]
    \includegraphics[width=0.7\textwidth,page=7, clip=true, trim = 0 4.5cm 0 7cm]{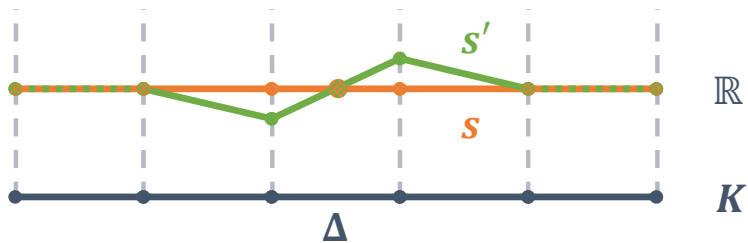} 
    \centering
	\caption{An example of the result $s'$ of slope-perturbing $s$ over a simplex $\Delta$ in $K$. We use the join over $\str(\Delta^1)$.
    } \label{fig:slopePerturb}
\end{figure}

The following statement is immediate from the bounds established in \cref{lem:perturbSlopeToPoints,lem:join}:
\begin{lemma} \label{lem:perturbSlopeColor}
Let $K$ be a colored simplicial complex of pure dimension $m$. Let $K'$ be the collection of simplices of color $i$, assume it is finite. Suppose $s': |K| \to \R^n$ is the result of slope-perturbing a piecewise linear map $s : |K| \to \R^n$ by $\epsilon$ over the $i$th color. Let $\rmax$ and $\maxcoeff$ be the maximum value of respectively $\rmax(\Delta)$ and $\maxcoeff(\Delta)$ over $\str(K')$ and let $\rmin$ be the minimum value of $\rmin(\Delta)$ over $\str(K')$. Then, the following bounds hold:
\begin{itemize}
    \item $\dist{0}( s, s') = O(\epsilon\rmax)$, and
    \item $\dist{1}( s, s') = O\left(\epsilon\rmax \left( 1 + \maxcoeff \right)\right)$. \qedhere
\end{itemize}
\end{lemma}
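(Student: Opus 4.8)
The plan is to estimate $\dist0(s,s')$ and $\dist1(s,s')$ one top-simplex at a time. Since both the slope-perturbation of \cref{lem:perturbSlopeToPoints} and the join construction of \cref{def:join} replace an affine map on a simplex by another affine map on the same simplex, the output $s'$ is again piecewise linear with respect to $K$ itself, so it suffices to bound $\dist{r}(s|_\Delta,s'|_\Delta)$ for each $\Delta \in K^{(\topd)}$ and take the maximum. I would then sort the top simplices into three classes: those disjoint from $\str(K')$, over which $s' = s$ and nothing is to be shown; the simplices of color $i$; and the top simplices lying in the ring $\ring(K')$.

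For a top simplex $\Delta$ of the ring, the coloring hypothesis does the combinatorial work. Because the stars of two distinct color-$i$ simplices are disjoint (\cref{def:coloring}), $\Delta$ meets exactly one color-$i$ simplex $\Delta_0$; the intersection $B := \Delta\cap\Delta_0$ is a common face, and writing $A$ for the complementary face of $\Delta$ one has $\Delta = \join{A}{B}$ with no vertex of $A$ lying in any color-$i$ simplex, so $s'|_A = s|_A =: v$ (the join leaves the face pointing away from $\Delta_0$ unchanged, which is also what makes $s'$ well-defined, as noted after \cref{def:perturbSlopeColor}). By construction $s|_\Delta = \joins{\Delta}{A,B}{v}{s|_B}$ and $s'|_\Delta = \joins{\Delta}{A,B}{v}{\widetilde s|_B}$, where $\widetilde s$ is the $\epsilon$-slope-perturbation produced over $\Delta_0$. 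Now \cref{lem:join} applies with $u_1 = s|_B$, $u_2 = \widetilde s|_B$; since $\dist0(s|_B,\widetilde s|_B) \le \dist0(s|_{\Delta_0},\widetilde s|_{\Delta_0}) = O(\epsilon\,\rmax(\Delta_0)) = O(\epsilon\,\rmax)$ by \cref{lem:perturbSlopeToPoints}, and $\maxcoeff(\Delta) \le \maxcoeff$, we obtain $\dist0(s|_\Delta,s'|_\Delta) = O(\epsilon\,\rmax)$ and $\dist1(s|_\Delta,s'|_\Delta) = O(\epsilon\,\rmax(1 + \maxcoeff))$.

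For a top simplex $\Delta$ of color $i$ we have $s'|_\Delta = \widetilde s|_\Delta$, so \cref{lem:perturbSlopeToPoints} gives directly $\dist0(s|_\Delta,s'|_\Delta) = O(\epsilon\,\rmax(\Delta)) \le O(\epsilon\,\rmax)$ and $\dist1(s|_\Delta,s'|_\Delta) = O(\epsilon + \epsilon\,\rmax(\Delta))$. The one place where the argument is not a mechanical substitution is matching the stray $\epsilon$ with the claimed bound $O(\epsilon\,\rmax(1+\maxcoeff))$: here I would record the elementary inequality $\rmax(\Delta)\cdot\maxcoeff(\Delta) \ge 1$, valid for any linear simplex. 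Indeed, taking in \cref{def:maxcoeff} the coefficients $\lambda_1 = 1/|v_1-v_0|$ and $\lambda_j = 0$ for $j\ne 1$ shows $\maxcoeff(\Delta) \ge 1/|v_1-v_0| \ge 1/\rmax(\Delta)$. Consequently $\rmax\cdot\maxcoeff \ge 1$, so $\epsilon \le \epsilon\,\rmax\,\maxcoeff \le \epsilon\,\rmax(1+\maxcoeff)$, and therefore $\epsilon + \epsilon\,\rmax(\Delta) \le \epsilon\,\rmax(1+\maxcoeff)$; hence the color-$i$ simplices also satisfy $\dist1(s|_\Delta,s'|_\Delta) = O(\epsilon\,\rmax(1+\maxcoeff))$.

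Taking the maximum over the three classes then yields $\dist0(s,s') = O(\epsilon\,\rmax)$ and $\dist1(s,s') = O(\epsilon\,\rmax(1+\maxcoeff))$, as asserted. I expect the two delicate points — rather than any of the estimates, which are direct appeals to \cref{lem:perturbSlopeToPoints,lem:join} — to be (i) the description of a ring simplex as a join $\join{A}{B}$ with $A$ disjoint from $K'$ and $B$ a face of a unique color-$i$ simplex, which is precisely what the coloring guarantees, and (ii) the inequality $\rmax\cdot\maxcoeff \ge 1$ used to absorb the leftover $\epsilon$ over the color-$i$ simplices into the stated bound.
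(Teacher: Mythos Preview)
Your proposal is correct and follows exactly the approach the paper intends: the paper's own proof is the single sentence ``immediate from the bounds established in \cref{lem:perturbSlopeToPoints,lem:join}'', and you have faithfully unpacked that claim simplex by simplex. The one detail you add that the paper leaves implicit is the inequality $\rmax\cdot\maxcoeff \ge 1$ needed to absorb the stray $\epsilon$ on color-$i$ simplices into the stated form of the $C^1$-bound; your verification of it from \cref{def:maxcoeff} is correct.
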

	\section{Jiggling} \label{sec:jiggling}

In this section we establish our main jiggling results. We first tackle jiggling in the linear setting (\cref{sec:jigglingEucl}), for maps of linear polyhedra into Euclidean space. We then address jiggling for sections of bundles over manifolds (\cref{sec:jigglingMfd}).

\subsection{Jiggling in the linear setting} \label{sec:jigglingEucl}

We can formulate our first jiggling theorem, using the terminology for differential relations over polyhedra from \cref{sec:JetPolyhedra}. 
\begin{theorem} \label{th:jigglingEucl}
Consider a finite simplicial complex $K$ of pure dimension, and an open and fiberwise dense differential relation $\SR := \{\SR_\Delta\}_{\Delta \in K^{(top)}}$, where $\SR_\Delta \subset J^1(\Delta,\R^n)$ for all $\Delta \in K^{(top)}$. Then, given
	\begin{itemize}
		\item $\epsilon>0$, and
		\item a piecewise smooth map $s: |K| \to \R^n$,
	\end{itemize}
there exists an $\epsilon$-jiggling $(s',K_\ell)$ of $(s,K)$ such that
\begin{itemize}
	\item $s'$ is piecewise linear with respect to the crystalline subdivision $K_\ell$ of $K$, and
	\item $s'$ is a piecewise solution of $\SR$.
\end{itemize}
\end{theorem}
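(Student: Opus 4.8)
The plan is to combine the three ingredients assembled in the previous sections: crystalline subdivision (with its uniform control over model simplices and colorings, \cref{lem:coloringCrystalline,lem:rmaxminBound}), linearization (\cref{lem:linSecDistEucl}), and colorwise slope-perturbation (\cref{lem:perturbSlopeColor}), together with the local-solution statement \cref{cor:ballLinExt}. First I would fix, once and for all, a coloring of $K$ of size $C$ that survives all crystalline subdivisions; this exists by \cref{lem:coloringCrystalline}. Next I would choose $\ell$ large: large enough that the linearization $s^\lin_\ell$ of $s$ with respect to $K_\ell$ is $C^1$-close to $s$ (with the $O(2^{-\ell})$ bound from \cref{lem:linSecDistEucl}), and large enough that every simplex of $K_\ell$ is small enough for \cref{cor:ballLinExt} to apply — i.e. so that over each top simplex $\Delta \in K_\ell$, after an $\epsilon_0$-slope-perturbation of the relevant $1$-jet, the linear extension is an honest local solution of $\SR_\Delta$ over a ball containing $\Delta$. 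Here the compact set $A$ fed into \cref{cor:ballLinExt} is the (compact, by finiteness of $K$ and continuity) set of $1$-jets $\{j^1(s^\lin_\ell)(v) : v \text{ a vertex of } K_\ell\}$, or more robustly the closure of the set of slopes appearing; crucially the resulting $\delta>0$ does not depend on the individual simplex.

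Having replaced $s$ by its linearization $s_0 := s^\lin_\ell$, I would then run the main induction on the color $i \in [C]$. At stage $i$, assuming $s_{i-1}$ is piecewise linear with respect to $K_\ell$ and is already a solution of $\SR$ over (a neighborhood of) all simplices of colors $0,\dots,i-1$, I would produce $s_i$ by slope-perturbing $s_{i-1}$ by some small $\epsilon_i>0$ over the $i$th color, in the sense of \cref{def:perturbSlopeColor}: on each top simplex $\Delta$ of color $i$, replace the current $1$-jet at the distinguished vertex by a nearby one whose linear extension solves $\SR_\Delta$ over $\Delta$ (possible by \cref{cor:ballLinExt} because $\rmax(\Delta)$ is smaller than the $\delta$ produced there), then restore continuity on $\str(\Delta)$ via the join construction of \cref{def:join}. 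By \cref{lem:perturbSlopeColor}, $\dist1(s_{i-1},s_i) = O(\epsilon_i \rmax(1+\maxcoeff))$, where $\rmax$ and $\maxcoeff$ range over $\str(K')$; by \cref{lem:rmaxminBound} the product $\rmax \cdot \maxcoeff$ is bounded independently of $\ell$, so this is $O(\epsilon_i)$. Two things must be checked: (a) the new solution over color $i$ is genuine — this is exactly the content of the slope-perturbation plus linear-extension construction; and (b) the solutions over colors $0,\dots,i-1$ are not destroyed. For (b), since colors are chosen so that stars of simplices of color $i$ are disjoint from the simplices of other colors only up to their ring, one uses the \emph{openness} of $\SR$: a simplex $\Delta'$ of an earlier color over which $s_{i-1}$ is a solution lies at positive $C^1$-distance from the complement of $\SR_{\Delta'}$ (by compactness of $\Delta'$), so choosing $\epsilon_i$ small enough keeps $s_i|_{\Delta'}$ inside $\SR_{\Delta'}$.

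The main subtlety — and the step I expect to be the real obstacle — is the bookkeeping of the constants $\epsilon_i$ so that the induction closes and the final $C^1$-distance $\dist1(s,s')$ stays below the prescribed $\epsilon$. One works backwards: choose $\epsilon_C$ last and smallest, then $\epsilon_{C-1}$, etc.; at each stage $\epsilon_i$ must be small enough both to preserve the finitely many earlier solutions (using openness, as above, which gives a positive threshold depending only on $s_{i-1}$ and hence ultimately on $\ell$ and the earlier $\epsilon_j$'s) and to keep $\sum_i \dist1(s_{i-1},s_i) = O(\sum_i \epsilon_i)$ together with $\dist1(s,s_0) = O(2^{-\ell})$ below $\epsilon$. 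Because every perturbation is a slope-perturbation followed by a join, $s' := s_C$ is piecewise linear with respect to $K_\ell$, and after the induction it is a solution of $\SR_\Delta$ for every top $\Delta \in K_\ell$ — i.e. a piecewise solution of $\SR$. Finally $d_{C^1}(s,s') \le d_{C^1}(s,s_0) + \sum_{i=0}^{C} d_{C^1}(s_i,s_{i+1}) < \epsilon$ by construction, so $(s',K_\ell)$ is the desired $\epsilon$-jiggling of $(s,K)$. One small point to be careful about: the join construction on overlapping stars of distinct simplices of the same color is unambiguous precisely because those stars are disjoint (that is the whole purpose of the coloring), so no compatibility issue arises within a single stage.
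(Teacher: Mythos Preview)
Your overall plan matches the paper's proof closely: linearize via \cref{lem:linSecDistEucl}, then slope-perturb colorwise using \cref{def:perturbSlopeColor} and \cref{cor:ballLinExt}, with the $\ell$-independent bound on $\rmax\cdot\maxcoeff$ from \cref{lem:rmaxminBound} controlling the $C^1$-size of each perturbation.

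The one genuine gap is precisely the constant-selection you flag as ``the real obstacle'' but do not resolve. As written there is a circular dependency: you fix $\ell$ first, then choose each $\epsilon_i$ adaptively depending on $s_{i-1}$ (hence on $\ell$); but the slope-perturbation by $\epsilon_i$ over a simplex $\Delta$ of color $i$ only produces a solution over $\Delta$ if $\rmax(\Delta)$ lies below the $\delta$ that \cref{cor:ballLinExt} outputs for the input $\epsilon_i$ --- and that $\delta$ may well be smaller than anything your initial choice of $\ell$ guarantees. Your compact set $A$ of jets also depends on $\ell$, which compounds the issue. The paper breaks this circularity by a decoupling step you are missing: it chooses the full sequences $\{\epsilon_i\}$ and $\{\delta_i\}$ \emph{a priori} and \emph{independently of $\ell$}, working uniformly over all piecewise-linear $h$ with $\dist1(h,s)<\epsilon$ and feeding the $\ell$-free compact set $\overline{B(j^1 s(|K|),\epsilon)}$ into \cref{cor:ballLinExt}. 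It also strengthens the inductive hypothesis from ``$s_\ell^{(i)}$ is a solution on earlier colors'' to ``$j^1 s_\ell^{(i)}$ carries a quantified $\delta_i/2$ buffer inside $\SR$'', which is what makes the forward construction of $\epsilon_{i+1}$ possible without reference to the specific $s_\ell^{(i)}$. Only after both sequences are fixed is $\ell$ chosen, large enough that $\rmax(\Delta)<\delta_C/2$ for the terminal (smallest) $\delta_C$.

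One minor point: \cref{lem:coloringCrystalline} gives a uniform bound $C$ on the number of colors across all $K_\ell$, but the coloring itself is re-chosen for each $\ell$; there is no single coloring that ``survives all crystalline subdivisions''.
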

\begin{proof}
We write $C$ for the number of colors needed to color all the $K_\ell$ (\cref{lem:coloringCrystalline}). We write $K_\ell^{(\topd,i)}$ for the collection of top-dimensional simplices in $K_\ell$ of color $i$.

\pfstep{Strategy of the proof} The strategy we employ is the following: we linearize $s$ with respect to $K_\ell$ and then, inductively on the color $i \in [C]$, we slope-perturb by a constant $\epsilon_i$ so that the resulting section $s_\ell^{(i)}$ is a piecewise smooth solution of $\SR$ over the simplices of color $j \leq i$. The key points to pay attention to are:
\begin{itemize}
\item $\ell$ has to be large enough so that the linearization is a good approximation.
\item $\ell$ has to be large enough to guarantee that solutions can be constructed via slope-perturbation.
\item each $j^1 s_\ell^{(i)}$ has a $\delta_i$ neighborhood lying in $\SR$, which determines how large future perturbations and hence $\epsilon_{i+1}$ can be, so that the perturbed section is still a solution. 
\end{itemize}

\pfstep{Choosing a good collection of constants} We now choose the sequences $\{ \epsilon_i \}_{i \in [C]}$ and $\{ \delta_i \}_{i \in [C]}$ and $\ell$. A first requirement is that $\ell$ must be large enough such that $\dist1 (s, s_\ell^{\lin}) < \epsilon/2$, which can be achieved using \cref{lem:linSecDistEucl}.

We then claim that there exist sequences $\{\epsilon_i\}_{i \in [C]}$ and $\{\delta_i\}_{i \in [C]}$ such that
\begin{enumerate}[label={ (\arabic*)}]
	\item\label{it:epsilon} an $\epsilon_i$-slope-perturbation, of any piecewise linear map $h : |K| \to \R^n$ that is $\epsilon$-close to $s$ in the $C^1$-distance, over all simplices of the same color, causes at most a $\delta_{i-1}/4$ perturbation over adjacent simplices, in a $C^1$-sense, 
	\item\label{it:delta} any $1$-jet $\sigma \in \overline{ B(j^1 s(|K|), \epsilon_i)}$ admits an $\epsilon_i$-slope-perturbation $\sigma'$ satisfying that the $\delta_i$-ball around it lies in $\SR$. Moreover, the linear extension of each $\sigma''\in B(\sigma',\delta_i)$ over the $\delta_i$-ball around $\pi(\sigma'')$ is a solution of $\SR$,
	\item $\epsilon_{i+1} < \epsilon_i$, and
	\item $2 \delta_{i+1} < \delta_i$.
\end{enumerate}
These sequences can be constructed inductively on $i \in [C]$. When we set $\delta_{-1} = \frac{\epsilon}{2C}$ and $\epsilon_{-1} = 1$, we can treat the base case $i=0$ as we treat the inductive step. Hence we assume that we have constructed $\delta_{i-1}$ and $\epsilon_{i-1}$. For the existence of $\epsilon_i$, two comments are relevant: First we note that we can find $\epsilon_i$ such that \cref{it:epsilon} holds for $s^\lin_\ell$ because according to \cref{lem:rmaxminBound}, we can bound the size of any simplex in $K_\ell$ in such a way that we can bound the $C^1$-distance independently of $\ell$. Next, we point out that the $1$-jet of $h$ is contained in the $\epsilon$-neighborhood of $j^1s$ in $J^1(|K|,\R^n)$, which is compact, which allows us to find an $\epsilon_0$ that works for all such $h$. The constants $\delta_i$ such that \cref{it:delta} holds, are produced by \cref{cor:ballLinExt}.

Lastly, we ask $\ell$ to be large enough so that $\max(1,||s||_{C^1} + 2\epsilon)\rmax(\Delta) < \delta_C/2$ for every $\Delta \in K_\ell$. That this is possible follows from \cref{lem:rmaxminBound}.
	
\pfstep{Induction hypothesis} We can now construct the requested jiggling $s'$ by induction on $i \in [C]$. The induction hypothesis is that there exists a continuous and piecewise linear map $s_\ell^{(i)} : |K| \to \R^n$ such that 
	\begin{enumerate}[label={ (IH\arabic*)}]
		\item \label{item:IHsolNbh} for all $\Delta \in \cup_{j=0}^i K_\ell^{(\topd,j)}$, all $x\in \Delta$ and $\sigma'' \in B(j^1 s_\ell^{(i)} |_{\Delta} (x) , \delta_i/2) $, the linear extension of $\sigma''$ over $\Delta$ is a solution of $\SR$, and
		\item \label{item:IHdist} $\dist1 (s_\ell^{(i-1)}, s_\ell^{(i)}) < \frac{\epsilon}{2C}$ where $s_\ell^{(-1)} = s^{\lin}_\ell$.
	\end{enumerate} 
Since $\cup_{j=0}^{-1} K_\ell^{(\topd,j)} = \emptyset$, the base case follows from the induction step and hence we only prove the latter. Consider therefore $s_\ell^{(i)}$ as given. We want to construct $s_\ell^{(i+1)}$.
	
\pfstep{Construction} We construct $s_\ell^{(i+1)}$ by slope-perturbing $s_\ell^{(i)}$ by $\epsilon_{i+1}$ over all simplices in $K_\ell^{(\topd,i+1)}$, as follows. Consider a simplex $\Delta \in K_\ell^{(\topd,i+1)}$ with a vertex $v$. By definition of $\delta_{i+1}$, we know there exists $\sigma' \in B_f(j^1 s^{(i)}_\ell(v),\epsilon_{i+1}) $ such that for all $\sigma'' \in B(\sigma',\delta_{i+1})$ we have that $\lext(\sigma'')$ over $B(\pi(\sigma''),\delta_{i+1})$ is a solution. According to our assumptions on $\ell$, we have that $\rmax (\Delta) <\delta_C/2$. As such, we can define $s_\ell^{(i+1)}|_{\Delta}$ as $\lext{(\sigma')}$. According to \cref{lem:perturbSlopeColor}, we can do this for all simplices $\Delta \in K_\ell^{(\topd,i+1)}$ at the same time, applying the join construction over the star.
	
By our definition of $\{\epsilon_i\}_{i \in [C]}$, \cref{item:IHdist} is immediately satisfied for $s_\ell^{(i)}$ and $s_\ell^{(+1i)}$. To see that \cref{item:IHsolNbh} is satisfied on each simplex $\Delta \in \cup_{j=0}^i K_\ell^{(i+1)}$, we distinguish two cases: $\Delta$ is of color $i+1$ or of a previous color.
	
\pfstep{Simplices of color $i+1$} Take $\sigma'' \in B(j^1 s_\ell^{(i+1)} |_{\Delta} (x) , \delta_{i+1}/2) $ for some $x \in \Delta$. Recall that $v$ is the vertex of $\Delta$ over which we chose the $1$-jet $\sigma' $ we linearly extended and that hence we have $ j^1 s^{(i+1)}_\ell(v) = \sigma'$. Then we see that
	\begin{align*}
		\dist{0}(\sigma'', j^1 s^{(i+1)}_\ell(v))
		&\leq \dist{0}(\sigma'', j^1 s^{(i+1)}_\ell(x)) + \dist{0}(j^1 s^{(i+1)}_\ell(x), j^1 s^{(i+1)}_\ell(v)) \\
		&< \delta_{i+1}/2 + \dist{0}(j^1 s^{(i+1)}_\ell(x), j^1 s^{(i+1)}_\ell(v)),
	\end{align*}
where the latter term is bounded by $||s^{(i+1)}_\ell||_{C^1}\rmax(\Delta)$. Our assumptions on $\ell$ thus imply that $\sigma'' \in B(j^1 s^{(i+1)}_\ell(v),  \delta_{i+1})$. Hence, the section $\lext(\sigma'')$ is a solution over $\Delta$.  
	
\pfstep{Simplices of a previous color} Let now $\Delta$ be in $K_\ell^{(\topd,j)}$ for some $j<1+i$ and take take $\sigma'' \in B(j^1 s_\ell^{(i+1)}(x), \delta_{i+1}/2)$ with $x\in \Delta$. By definition of $\epsilon_{i+1}$ it follows in particular that we have $\dist{0}(j^1s_\ell^{(i)} , j^1s_\ell^{(i+1)} ) < \delta_{i}/2$, and hence we see that 
	\begin{align*}
		\dist{0} ( \sigma'', j^1 s_\ell^{(i)}(x)) 
		&\leq \dist{0} ( \sigma'', j^1 s_\ell^{(i+1)}(x) ) + \dist{0} ( j^1 s_\ell^{(i+1)}(x), j^1 s_\ell^{(i)}(x) ) \\
		&< \delta_{i+1}/2 + \delta_{i}/4 < \delta_i/2,
	\end{align*}
    where the last step follows since we assumed that $2 \delta_{i+1} < \delta_i$. Therefore $\lext(\sigma'')$ over $\Delta$ is a solution of $\SR$ and \cref{item:IHsolNbh} is still satisfied over $\Delta$.
	
	\pfstep{End of induction} We define $s'= s_\ell^{(C)}$. The inductive hypotheses imply that $s'$ is a piecewise linear solution approximating $s$.
\end{proof}

With minimal adaptations, the proof we have given also provides us with the relative case. We will refer to $s'$ below as the result of jiggling $s$ relative to $|K'|$ and $|K''|$.

\label{sec:jigglingEuclRel}
\begin{corollary}\label{cor:jigglingEuclRel}
Suppose $K$, $\SR$, $\epsilon$, and $s$ are given as in \cref{th:jigglingEucl}. Suppose moreover that we are given disjoint subcomplexes $K',K'' \subset K$ (of full dimension) with disjoint neighborhoods $\Op(|K'|),\Op(|K''|) \subset |K|$ such that $s|_{\Op(|K'|)}$ is a solution of $\SR$. Then, there exists an $\epsilon$-jiggling $(s': |K| \to \R^n, K_\ell)$ such that
\begin{itemize}
    \item $s'$ is piecewise linear on $|K|\setminus \left( \Op(|K'|) \cup \Op(|K''|) \right)$ with respect to $K_\ell$,
	\item $s'$ is a piecewise smooth solution of $\SR$ on $|K|\setminus\Op(|K''|)$, and
	\item $s' = s$ on $|K' \cup K''|$.
\end{itemize}
\end{corollary}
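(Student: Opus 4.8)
The strategy is to re-run the proof of \cref{th:jigglingEucl} essentially verbatim, but with two modifications coming from the two subcomplexes. First, we must avoid touching $K'$ (where $s$ is already a solution) and $K''$ (over which we make no claim); second, we must arrange that the intermediate maps are piecewise linear outside of $\Op(|K'|) \cup \Op(|K''|)$ without disturbing $s$ near $K'$ and $K''$. The initial linearization step is the one that needs genuine care: instead of linearizing with \cref{lem:linSecDistEucl} over all of $|K|$, we apply the relative linearization \cref{cor:linSecDist} twice—once to make $s$ piecewise linear over a subcomplex exhausting $|K| \setminus (\Op(|K'|) \cup \Op(|K''|))$, leaving $s$ untouched on slightly smaller neighborhoods of $K'$ and $K''$. (Strictly, one uses \cref{cor:linSecDist} with the subcomplex being a suitable crystalline subcomplex of $K_\ell$ covering the complement of the given neighborhoods; niceness of the relevant subcomplexes is what makes the interpolation in that proof legal.) Since $s|_{\Op(|K'|)}$ is already a solution of the open relation $\SR$, and the $C^1$-distance in \cref{cor:linSecDist} is $O(2^{-\ell})$, the modified map is still a solution on a (possibly slightly shrunk) neighborhood of $K'$ for $\ell$ large.

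Next I would set up the coloring and induction exactly as in \cref{th:jigglingEucl}, but restrict the slope-perturbation over each color to those top simplices of $K_\ell$ of that color that are \emph{disjoint} from the fixed neighborhoods $\Op(|K'|)$ and $\Op(|K''|)$—equivalently, that lie in the nice subcomplex on which we linearized. Over simplices meeting $\Op(|K'|)$ we do nothing, because $s$ (hence the linearized map) is already a solution there and the relation is open; over simplices meeting $\Op(|K''|)$ we also do nothing, since we only need to be a solution on $|K| \setminus \Op(|K''|)$. The join construction in \cref{def:perturbSlopeColor} is applied only in the stars of the perturbed simplices, and \cref{lem:perturbSlopeColor} controls the $C^1$-size of each step in terms of $\epsilon_i$, $\rmax$ and $\maxcoeff$ over those stars, exactly as before; the same inductive choice of constants $\{\epsilon_i\},\{\delta_i\}$ and of $\ell$ goes through, because the relevant jets still range over a compact subset of $J^1(|K|,\R^n)$. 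The two induction hypotheses \cref{item:IHsolNbh} and \cref{item:IHdist} are imposed only over the perturbed simplices, and the two-case analysis (``color $i+1$'' vs.\ ``previous color'') is unchanged. This produces $s' = s_\ell^{(C)}$ which is piecewise linear on $|K| \setminus (\Op(|K'|)\cup\Op(|K''|))$, equals $s$ on $|K'\cup K''|$ (indeed on the fixed neighborhoods), and is a piecewise smooth solution of $\SR$ everywhere outside $\Op(|K''|)$.

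The one subtlety worth highlighting—and the main obstacle—is the interface between the region where we perturb and the region where we leave $s$ alone. Near $\partial\Op(|K''|)$, the perturbed simplices of various colors are joined to the untouched map $s$, and one must make sure these joins are consistent across colors and do not accumulate error; this is handled precisely by the star-localized join of \cref{def:perturbSlopeColor} together with the telescoping estimate $\sum_i \epsilon/(2C) \le \epsilon/2$ from \cref{item:IHdist}, so that the total $C^1$-drift from $s^\lin_\ell$ stays below $\epsilon/2$ and, combined with $\dist1(s,s^\lin_\ell)<\epsilon/2$ from the (relative) linearization, yields $\dist1(s,s')<\epsilon$. Near $\Op(|K'|)$ no perturbation occurs, but one must still check that the map remains a solution across $\partial\Op(|K'|)$; since we arranged (by shrinking neighborhoods in the linearization step) that $s^\lin_\ell$ agrees with $s$ on a neighborhood strictly containing $|K'|$ and is a solution there, and elsewhere it becomes a solution by the induction, continuity of the jet along the triangulation $K_\ell$ gives a global piecewise solution on $|K|\setminus\Op(|K''|)$. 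Everything else is a word-for-word repetition of the proof of \cref{th:jigglingEucl}, which is why only minimal adaptations are needed.
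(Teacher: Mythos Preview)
Your approach is correct in outline and is close in spirit to the paper's, but the organization differs. The paper does not run a single restricted color induction as you do. Instead it covers $|K|\setminus(\Op(|K'|)\cup\Op(|K''|))$ by finitely many convex opens $U_i$, obtains nice subcomplexes $K^i = K_\ell\cap U_i$, and then inductively applies the entire proof of \cref{th:jigglingEucl} to each $K^i$ in turn, using the \emph{interpolation} of \cref{def:interpol} (not the join) on $\ring(K^i)$ and invoking \cref{lem:interpol} for the estimates there. So the paper has a double induction (over the $K^i$, and within each over colors) whereas you linearize relatively once via \cref{cor:linSecDist} and run a single color induction restricted to simplices outside the neighborhoods. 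Your version is arguably more streamlined; the paper's buys a clean separation of the ``interior'' jiggling from the ``boundary'' interpolation.

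There is one genuine gap in your write-up. The sentence ``continuity of the jet along the triangulation $K_\ell$ gives a global piecewise solution on $|K|\setminus\Op(|K''|)$'' is not a valid argument: the first jet of a piecewise smooth map is generally discontinuous across faces, so no continuity principle applies. What is actually needed is an explicit margin argument near $K'$. Since $s|_{\Op(|K'|)}$ is a solution and $\SR$ is open, there is a $\delta>0$ with $B(j^1 s|_{|\str^2(K',K_\ell)|},\delta)\subset\SR$ once $\ell$ is large enough that this double star lies in $\Op(|K'|)$. You must then arrange that the cumulative $C^1$-deviation from $s$ on simplices meeting $\Op(|K'|)$ (coming from both the relative linearization of \cref{cor:linSecDist} and the joins propagating from adjacent perturbed simplices) stays below $\delta$, not merely below $\epsilon/2$; this is achieved by taking $\ell$ large and $\epsilon_0$ small. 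The paper makes exactly this $\delta$-step explicit. Relatedly, on stars that reach from a perturbed simplex into the region where your map is no longer piecewise linear (the ring produced by \cref{cor:linSecDist}, or the untouched neighborhood of $K'$), the join of \cref{def:join} is not defined and you must use the interpolation of \cref{def:interpol} with the bound from \cref{lem:interpol}; alternatively, linearize over a slightly larger subcomplex so that stars of all perturbed simplices lie in the linear region.
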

\begin{proof}
Let $L \in \N$ be such that $\left|\strN{2}(K'\cup K'',K_L)\right| \subset \Op(|K'|) \cup \Op(|K''|)$, implying that $s$ restricted to $\left|\strN{2}(K',K_L)\right|$ is a solution. Then since $K'$ is finite, there exists $\delta>0$ such that
\[ B \left( j^1 s |_{\left| \strN{2}(K',K_L) \right|} ,\delta \right) \subset \SR. \]
We take the number of subdivisions $\ell$ to be at least $L$.

Let $\{U_i\}_{i \in I}$ be a finite collection of convex opens in $\R^N$ so that their union $\cup_i U_i$ intersected with $|K|$ covers $M \setminus (\Op(|K'|) \cup \Op(|K''|))$, but is disjoint from $|K'\cup K''|$. We define $K^i = K_\ell \cap U_i$ and note that they are nice subcomplexes of $K$ so that we can interpolate over their ring as in \cref{def:interpol}. We assume that $\ell$ is large enough so that $\cup_i |K^i|$ covers $M \setminus (\Op(|K'|) \cup \Op(|K''|))$ and so that $\str(K^i) \cap (K' \cup K'')= \emptyset$. 

Define $s^{(-1)} = s$. Then by induction on $i$ we can construct maps $s^{(i)}: |K| \to \R^n$ which are a solution over $\cup_{j=0}^i K^j$. We do so by applying the proof of \cref{th:jigglingEucl} to $s^{(i-1)}$ restricted to $K^i$, that is: we color the simplices in $K^i$ and slope-perturb over those simplices. When subdivision is needed for jiggling, we subdivide the entirety of $K$. We define the map $s^i$ as 
\begin{itemize}
    \item the result of jiggling on $|K^i|$,
    \item $s^{(i-1)}$ on $|\cl(K \setminus \str(K^i))|$, and 
    \item the result of interpolation on each simplex in $\ring(K^i)$.
\end{itemize}
Note that the second item achieves in particular that $s^{(i)}$ equals $s$ over $|K' \cup K''|$. 

To ensure that the map remains a solution over $|\Op(K')|$, we need to make sure that, all together, the map $s$ is perturbed less than $\delta$. This can be achieved by choosing: 1) $\ell$ large enough so that the linearization is a good enough approximation, and 2) the constant $\epsilon_0$ small enough, which controls the size of all slope-perturbations in the proof of \cref{th:jigglingEucl}. For the second point we additionally use \cref{lem:interpol} to bound the perturbations over simplices where we interpolate.
\end{proof}

\subsection{Jiggling over manifolds} \label{sec:jigglingMfd}

We now move to the general case of jiggling sections of arbitrary fiber bundles $E \rightarrow M$. We will work chart by chart, relative to previous charts, using \cref{cor:jigglingEuclRel}. We note that if, in the statement below, $M$ is compact, the subdivision $T'$ is a crystalline subdivision of $T$, but not if $M$ is non-compact.

\jigglingMfd

\begin{proof}
Suppose first that $M$ is compact and $Q$ is empty. Since the set $B(s(M),\epsilon) \subset E$ is compact, it can be covered by finitely many fibered charts $\{\phi_i : U_i \times W_i \to \R^m \times \R^n \}_{i \in [I]}$, where $U_i$ is an open subset of $M$ and $W_i$ an open subset of the fiber. We can assume that $\ell$ is large enough such that we can cover $K_\ell$ by finitely many subcomplexes $\{K^{(i)}\}_{i \in [I]}$ (not necessarily nice) with $T(|\strN{2} K^{(i)}|) \subset U_i$.

The argument now amounts to doing induction on $i$. Starting with $s = s^{(-1)}$, we produce a sequence of sections $s^{(i)}$ via subsequent applications of \cref{cor:jigglingEuclRel}. We note that linear extensions and slope-perturbations work equally well for sections as they do for maps. Concretely, in the $i$th step, using \cref{cor:jigglingEuclRel}, we jiggle the section $\phi_i \circ s^{(i-1)} : T(|\strN{2}(K^{(i)})|) \to \trivbr$ relative to $T(|\cup_{j=0}^{i-1} K^{(j)}|)$ and $T(|\ring(\str(K^{(i)}))|)$. We extend the resulting section $s^{(i)}|_{T(|\strN{2}(K^{(i)})|)}$ to $M$, by requiring that $s^{(i)}$ equals $s^{(i-1)}$ over $M \setminus T(|\strN{2}(K^{(i)})|)$. Whenever subdivision is needed for the jiggling, we do not just subdivide $K^{(i)}$, but the entirety of $K$. In finitely many steps the proof is complete.

\pfstep{The relative case} In the relative case where $Q \neq 0$ we apply jiggling relative to $Q$ throughout the proof.

\pfstep{The non-compact case} 
If $M$ is non-compact, we take an exhaustion $\{C_k\}_{k\in \N}$ by compacts.
Inductively on $k$, we jiggle the section $s$ over the annulus $C_{k} \setminus C_{k-1}$ without changing the triangulation of $C_{k-2}$. In what follows we provide the details.

We set again $s^{(-1)} = s$ and $K^{(-1)}=K$ and produce a sequence of sections $s^{(k)}$ that are each a piecewise solution of $\SR$ on a neighborhood of $C_i$ with respect to the triangulation $T: |K^{(k)}| \to M$. We can assume, by possibly removing some compacts from the exhaustion, that $T(|\strN{3}(K \cap C_k,K)|) \subset C_{k+1}$.

Assume now that $s^{(k-1)}$ and $K^{(k-1)}$ have been constructed, then the strategy is illustrated in \cref{fig:noncompact}. We define the subcomplex $L^{(k)}$ of $K^{(k-1)}$ such that $T(|L^{(k)}|) = T(|K^{(k-1)}|) \cap (C_k\setminus C_{k-1})$, and whenever we take its star or ring in the following we do so as subcomplex of $K^{(k-1)}$. Using the compact case, we then jiggle $s^{(k-1)}$ over $T(|\strN{2}(L^{(k)})|)$ relative to $T(|\ring(\str(L^{(k)}))|)$. We note that we jiggle $s^{(k-1)}$ over a manifold that generally will have a boundary, but that this does not pose any issues.

Jiggling produces a section $s^{(k)}$ over $T(|\strN{2}(L^{(k)})|)$ that is piecewise smooth with respect to $T:|(\strN{2}(L^{(k)}))|\to M$ for some $\ell \in \N$. In particular $s^{(k)}$ is now a solution over a neighborhood of $C_k\setminus C_{k-1}$. We extend $s^{(k)}$ to $M$ by requiring it equals $s^{(k-1)}$ on $M \setminus T(|\strN{2}(L^{(k)})|)$. We can assume that the jiggling was small enough so that $s^{(k)}$ restricted to $T(|\strN{2}(L^{(k)})|) \cap C_{k-1}$ remains a solution. Then $s^{(k)}$ is a solution over a neighborhood of $C_k$.

We extend the $\ell$th crystalline subdivision of $\strN{2}(L^{(k)})$ to a subdivision $K^{(k)}$ of $K$ defined as
\begin{itemize}
    \item the $\ell$th crystalline subdivision of $\strN{2}(L^{(k)})$,
    \item the $0$th crystalline subdivision of $K^{(k-1)} \setminus (\strN{3}(L^{(k)} ))$, and
    \item the barycentric cone off in each simplex in $\ring( \strN{2}( L^{(k)} ) )$.
\end{itemize}
The section $s^{(k)}$ is then indeed piecewise smooth with respect to $T: K^{(k)}\to M$.

Since we assumed that $T(|\strN{3}(K \cap C_k,K)|) \subset C_{k+1}$, we see that $K^{(k)}$ and $K^{(k-1)}$ agree over $C_{k-2}$. In the limit we hence obtain a well-defined triangulation.
\end{proof}

\begin{figure}[h]
    \centering
    \includegraphics[width=0.8\linewidth]{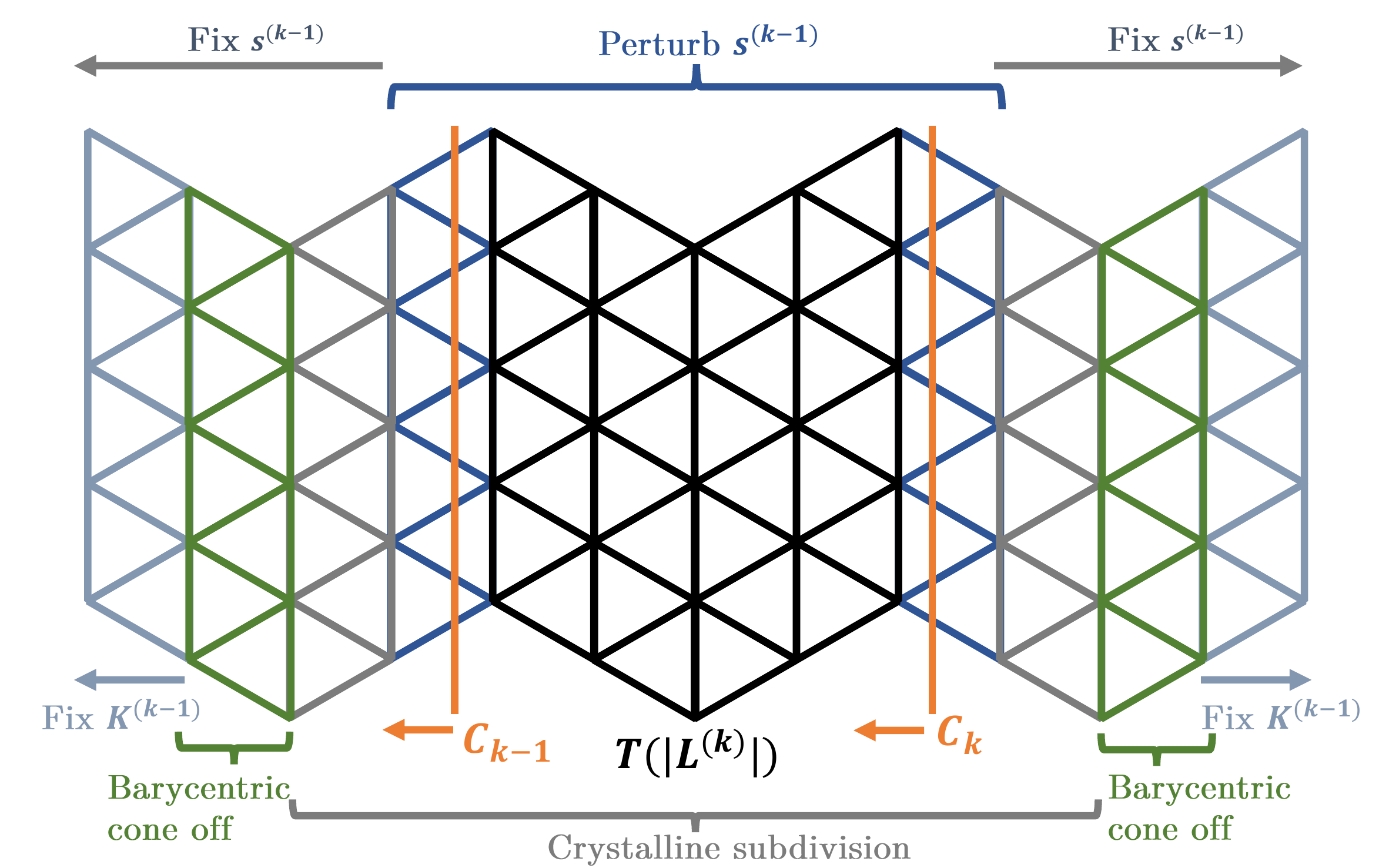}
    \caption{A sketch of the proof of \cref{th:jigglingMfd} in the non-compact case when about to construct $s^{(k)}$. The compacts $C_{k-1}$ and $C_k$ lie left of the respective orange lines. We have drawn $K^{(k-1)}$ and indicated the subcomplex $L^{(k)}$ and its stars. On top we indicate how to obtain $s^{(k)}$ from $s^{(k-1)}$ and on the bottom how to obtain $K^{(k)}$ from $K^{(k-1)}$.}
    \label{fig:noncompact}
\end{figure}

Alternatively, in the proof of \cref{th:jigglingMfd} for the non-compact case, we could have used \cref{prop:crystallineConeOff}. This makes sure that that the resulting subdivision of $K$ is a generalized crystalline subdivision, which proves useful in \cref{sec:JigglingGenPos}.
\begin{corollary} \label{cor:jigglingMfd} 
Consider the assumptions from \cref{th:jigglingMfd}. Then, there exists an $\epsilon$-jiggling $(s',T')$ such that
\begin{itemize}
    \item $s'$ is a piecewise solution of $\SR$ with respect to $T'$, 
    \item $s'|_ {Q} = s|_{Q}$, and
    \item the subdivision of $K$ underlying $T'$ is a generalized crystalline subdivision of $K$.
\end{itemize}
\end{corollary}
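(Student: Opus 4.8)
The plan is to carry out the alternative construction mentioned at the end of \cref{sec:crystallineConingOff}: repeat the proof of \cref{th:jigglingMfd}, but build the subdivisions with \cref{prop:crystallineConeOff} in place of the ad hoc barycentric cone off. There is nothing to do in the compact case, including the relative-over-$Q$ version, since every subdivision performed in that part of the proof of \cref{th:jigglingMfd} (and in \cref{cor:jigglingEuclRel}, which it invokes) is a crystalline subdivision of $K$, and a crystalline subdivision is in particular a generalized crystalline subdivision. So the whole issue is the non-compact case.

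First I would fix an exhaustion $\{C_k\}$ of $M$ by compacts, as in the proof of \cref{th:jigglingMfd}, and run the same induction, producing sections $s^{(k)}$ and triangulations $T\colon|K^{(k)}|\to M$, but now maintaining the extra invariant that $K^{(k)}$ is a \emph{generalized crystalline subdivision of $K$} which, moreover, coincides with an honest crystalline subdivision of $K$ over a neighborhood of $C_{k}$. At step $k$, instead of subdividing the second star of $L^{(k)}$ crystallinely and coning off its ring by hand, I would apply \cref{prop:crystallineConeOff} to $K$ with: $A$ a compact neighborhood of $C_{k-2}$ chosen inside the crystalline region of $K^{(k-1)}$ (so that the level $\ell_0$ there matches $K^{(k-1)}$); $\delta$ small enough that $B(A,\delta)$ stays inside $C_{k-1}$ and away from the already-finalized triangulation; and $\ell_1$ large enough that all simplices of the resulting complex over $B(A,\delta)$ and outside it are small enough for the linearization and the colorwise slope-perturbation to go through. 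The output $K^{(k)}$ is a subdivision of $K$ that agrees with $K^{(k-1)}$ over a neighborhood of $C_{k-2}$ (so the limiting triangulation is well defined), equals a crystalline subdivision outside $B(A,\delta)$, is a generalized crystalline subdivision throughout, and comes with a finite family of model simplices and a uniformly bounded number of colors over $B(A,\delta)$.

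With $K^{(k)}$ fixed I would then jiggle $s^{(k-1)}$ over $C_k\setminus C_{k-1}$ with respect to $K^{(k)}$ \emph{without any further subdivision}: linearize $s^{(k-1)}$ over the relevant region relative to its complement, as in \cref{cor:linSecDist} (the linearization is $C^1$-small because the simplices there are small by the choice of $\ell_1$, exactly as in the remark following \cref{lem:linSecDistEucl}, and \cref{lem:niceSubcomplex} still supplies the nice subcomplexes needed for the interpolation); color the top simplices using \cref{prop:crystallineConeOff}; and slope-perturb colorwise and join over stars exactly as in the proof of \cref{th:jigglingEucl}, with the uniform bounds on $\rmax$, $\rmin$ and $\maxcoeff$ coming from the model simplices of \cref{prop:crystallineConeOff} in place of \cref{lem:rmaxminBound}. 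Keeping all perturbations small (relative to a $\delta$-neighborhood of $j^1s^{(k-1)}$ lying in $\SR$ over $C_{k-1}$) and extending by $s^{(k-1)}$ outside, this produces $s^{(k)}$ and closes the induction; over $B(A,\delta)$ the complex $K^{(k)}$ is merely a subdivision of $K^{(k-1)}$ and $s^{(k)}=s^{(k-1)}$ there, so nothing is broken. In the limit, $T'$ agrees over each compact with some $K^{(k)}$, hence is itself a generalized crystalline subdivision of $K$, and the relative conclusion $s'|_Q=s|_Q$ is inherited verbatim from \cref{th:jigglingMfd}.

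I expect the main obstacle to be the consistency bookkeeping across steps: arranging simultaneously that (i) the region on which $K^{(k)}$ is forced to agree with the previous triangulation is one where that triangulation is a plain crystalline subdivision, so that the ``agrees with $K_{\ell_0}$ on $A$'' clause of \cref{prop:crystallineConeOff} applies, and (ii) enough subdivision is still permitted over $C_k\setminus C_{k-1}$ for the jiggling there to succeed \emph{without} a crystalline re-subdivision, since any such re-subdivision would crystallinely subdivide the cone-off simplices in the transition annulus and thereby destroy the generalized-crystalline structure. Both are handled by pushing the transition annulus $B(A,\delta)$ into $C_{k-1}\setminus C_{k-2}$, a region where no jiggling takes place and $s$ is left untouched; everything else is a routine adaptation of the argument for \cref{th:jigglingMfd}.
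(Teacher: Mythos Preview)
Your overall strategy matches the paper's: replace the ad hoc barycentric cone off in the non-compact induction of \cref{th:jigglingMfd} by the controlled scheme of \cref{prop:crystallineConeOff}. The compact case is indeed already handled. However, there is a genuine gap in your inductive bookkeeping for the non-compact case.

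The invariant you state --- that $K^{(k)}$ coincides with an honest crystalline subdivision of $K$ over a neighborhood of $C_k$ --- cannot be maintained. At step $k$ your transition annulus $B(A,\delta)\setminus A$ sits inside $C_{k-1}\subset C_k$, and the cone-off simplices created there are never crystalline in $K$. So $K^{(k)}$ is \emph{not} crystalline over any neighborhood of $C_k$, and at step $k+1$ you cannot choose your new $A$ to be a neighborhood of $C_{k-1}$ lying inside the crystalline region of $K^{(k)}$: that region has a hole exactly at the previous transition annulus. Equivalently, \cref{prop:crystallineConeOff} applied to $K$ forces $K^{(k)}|_A = K_{\ell_0}$, a single crystalline level, whereas $K^{(k-1)}|_A$ already contains the cone-off simplices from all earlier steps; hence your claim that $K^{(k)}$ agrees with $K^{(k-1)}$ over a neighborhood of $C_{k-2}$ fails, and the limiting triangulation is not well defined. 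You correctly flag this tension in your final paragraph, but pushing the transition annulus into $C_{k-1}\setminus C_{k-2}$ does not resolve it: that is precisely where the cone-off lands, and it must then be swallowed by the next $A$.

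The paper's fix is to freeze $K^{(k)}=K^{(k-1)}$ on $C_{k-2}$ \emph{verbatim}, keeping all previous cone-offs intact, and to invoke the coning-off of \cref{prop:crystallineConeOff} only in the annulus just outside $C_{k-2}$. Inductively, $K^{(k-1)}$ is a pure crystalline subdivision of the original $K$ there, so the new cone-off simplices are obtained from crystalline-of-$K$ simplices by a single barycentric cone off, hence generalized crystalline; they are then frozen at step $k+1$. With this correction the rest of your argument (choosing $\ell_1$ large enough up front and jiggling on the fixed $K^{(k)}$ without further subdivision) goes through.
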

\begin{proof}
    The section $s'$ can be constructed as in the non-compact case of \cref{th:jigglingMfd}. For the subdivision $K'$ we have to make some changes to the proof, which we now indicate:

    Recall that $\{C_k\}_{k\in K}$ is an exhaustion by compacts of $M$. We define $\delta_k$ such that the closed $\delta_k$-neighborhood $\overline{B(C_{k},\delta)}$ of $C_{k}$ is contained in $|\str(K \cap C_{k},K)|$. 
    
    We now start the induction and assume that $s^{(k-1)}$ and $K^{(k-1)}$ have been constructed. We recall that the section $s^{(k)}$ is constructed such that it is piecewise smooth with respect to $T:|(\strN{2}(L^{(k)}))|\to M$, where $L^{(k)} = K^{(k)} \cap (C_k\setminus C_{k-1})$ is a subcomplex of $K^{(k)}$. We assume that $\ell$ is large enough that we can cone off $K^{(k-1)}_\ell \cap C_{k} $ within a $\delta_k$-neighborhood in $M$ to any finer crystalline subdivision of $K$ as in \cref{prop:crystallineConeOff}. If we do so for every $k$ (and in particular have done so for $k-2$), we can then define the subdivision $K^{(k)}$ of $K$ as the result of \cref{prop:crystallineConeOff} such that $K^{(k)}$ equals 
    \begin{itemize}
    \item the $0$th crystalline subdivision of $K^{(k-1)} \cap C_{k-2}$, and
    \item the $\ell$th crystalline subdivision of $K^{(k-1)}$ in $M \setminus B(C_{k-2},\delta_{k-2})$.
    \end{itemize}
    We see that $K^{(k)}$ and $K^{(k-1)}$ agree over $C_{k-2}$. In the limit we hence obtain a well-defined triangulation. The underlying crystalline subdivision is indeed a generalized crystalline subdivision as each simplex is only subdivided a finite number of times using crystalline subdivision, except when it lies in a $B(C_k,\delta) \setminus C_k$, then also the barycentric cone off is applied once, before it is no longer changed.
    
    We observe that the section $s^{(k)}$ is then indeed piecewise smooth with respect to $T: K^{(k)}\to M$.
\end{proof}

    \section{Jiggling as an h-principle} \label{sec:whe}

In this section we discuss how jiggling can be interpreted as an h-principle without homotopical assumptions for piecewise solutions. As we will see in \cref{sec:whesSet} (and as foreshadowed in \cref{sec:simplSets}), simplicial sets provide the natural setting for such an statement.

Even though the setting of topological spaces is less natural for our purposes (see \cref{sec:wheNotSpaces} for an informal discussion), we do want to make the following observation: at this point we have already proven the $\pi_0$-surjectivity of the map $\SolPS \hookrightarrow \GammaPS(E)$ between \emph{topological} spaces. Here we denote by $\GammaPS(E)$ the space of piecewise smooth sections of $E$ and by $\SolPS$ the subspace of piecewise smooth solutions of $\SR$.

\begin{corollary}
Let $\SR \subset J^1(E)$ be an open, fiberwise dense, differential relation. Then, every section $s: M \to E$ is homotopic, through piecewise smooth sections, to a piecewise smooth solution $s': M \to E$. 
\end{corollary}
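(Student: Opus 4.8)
The plan is to deduce this corollary directly from the main theorem, \Cref{th:jigglingMfd}, by simply not worrying about the triangulation data. Given an arbitrary section $s : M \to E$, we first recall that every smooth manifold admits a smooth triangulation $T : |K| \to M$ (Whitehead), and that any smooth section is in particular piecewise smooth with respect to such a $T$, so $(s,T)$ is a valid input for \Cref{th:jigglingMfd}. We then choose $\epsilon : M \to \R_+$ to be any continuous positive function (its value is irrelevant here), take $Q = \emptyset$, and apply \Cref{th:jigglingMfd} to obtain an $\epsilon$-jiggling $(s',T')$ of $(s,T)$ such that $s'$ is a piecewise smooth solution of $\SR$.

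It then remains to produce the homotopy through piecewise smooth sections connecting $s$ to $s'$. For this I would argue in two stages. First, \Cref{th:jigglingMfd} in fact produces $s'$ with $\dist1(s,s') < \epsilon$; since the perturbations are $C^1$-small, the straight-line homotopy $s_t$ can be taken inside a tubular neighborhood of $s(M)$ in $E$ (using a fiberwise convex structure, e.g. a fiberwise exponential map or working chart by chart), and each $s_t$ is piecewise smooth with respect to a common subdivision of $T$ and $T'$. Alternatively, and more robustly, one can observe that the linearization step (\Cref{cor:linSecDist}) and the slope-perturbation steps (\Cref{lem:perturbSlopeToPoints,def:join}) are all achieved via linear interpolation on each simplex, so that each intermediate stage in the construction of $s'$ is connected to the previous one by an explicit piecewise smooth homotopy; concatenating these yields the desired path from $s$ to $s'$.

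The only genuinely delicate point — and the one I would write out carefully — is that the intermediate sections $s_t$ of the homotopy are honestly piecewise smooth \emph{sections of $E$}: one must check that the linear interpolation, performed in fibered charts, glues to a continuous section and stays within the image of the chart, which is exactly the kind of estimate handled by the $C^1$-smallness built into \Cref{def:jiggling}. Once this is in place, taking the concatenated homotopy (first $s \rightsquigarrow s^\lin$ by linear interpolation on simplices, then the finitely many slope-perturbation homotopies from the proof of \Cref{th:jigglingEucl} transported to $M$ as in the proof of \Cref{th:jigglingMfd}) gives a homotopy through $\GammaPS(E)$ from $s$ to the solution $s'$, which is precisely the assertion. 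I expect the main obstacle to be purely expository: packaging the chart-by-chart, color-by-color homotopies of the proof of \Cref{th:jigglingMfd} into a single path, rather than any new mathematical content.
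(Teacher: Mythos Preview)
Your approach is essentially the same as the paper's: apply \Cref{th:jigglingMfd} and then linearly interpolate in fibered charts. One small correction: you say the value of $\epsilon$ is ``irrelevant here,'' but it is not --- the paper explicitly takes $\epsilon$ small enough (depending on a finite collection of fibered charts chosen \emph{a priori}) so that the straight-line interpolation between $s$ and $s'$ stays inside those charts and hence defines a genuine piecewise smooth section of $E$ at each time; you recognize this need later, so just tighten the opening paragraph to reflect it. Your alternative route (concatenating the explicit linearization and slope-perturbation homotopies from the proof) is more laborious than necessary and the paper does not bother with it; the direct interpolation over $T'$ suffices.
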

\begin{proof}
We can $\epsilon$-jiggle $s$ using \cref{th:jigglingMfd} to end up with a piecewise smooth solution $s': M \to E$ with respect to a triangulation $T'$ of $M$. If we take $\epsilon$ small enough, the sections $s$ and $s'$ are homotopic as piecewise smooth maps by linearly interpolating (with respect to some fibered charts chosen a priori) over $T'$.
\end{proof}

\subsection{A weak homotopy equivalence between topological spaces} \label{sec:wheNotSpaces}

We now make a handful of remarks about the map $\iota: \SolPS \hookrightarrow \GammaPS(E)$. We do this to motivate the simplicial approach and pose a couple of open questions. This subsection contains no results and can otherwise be skipped.

Suppose we are interested in proving $\pi_k$-injectivity of $\iota$. To do so we start off with a family of sections $s: (D^n,S^{n-1}) \to (\GammaPS(E),\SolPS)$ representing a relative homotopy class. What we would like to do is consider the induced section $\tilde{s} : M \times D^n \to E$ and jiggle it. For each $t \in S^{n-1}$, the sections $\tilde{s}_t: M \times\{t\} \to E$ are piecewise smooth, but with respect to different triangulations $T_t$ of $M \times \{t\}$. However, without further assumptions, these triangulations need not be compatible in any way. In particular, even though the sections $\tilde{s}_t$ are piecewise smooth, the section $\tilde{s}$ need not be. It follows that we cannot apply jiggling.

Conversely, consider a section $s:M \times D^n \to E$ that is piecewise smooth with respect to a triangulation $T$ of $M \times D^n$. We can then wonder whether the sections $s_t: M \times \{t\} \to E$ are piecewise smooth. However, the restriction of $T$ to a fiber $M \times \{t\}$ need not be a triangulation. As such, the sections $s_t$ may not be piecewise smooth.

These remarks suggest the following approach, which we leave for the reader to explore. Suppose that we consider a triangulation $T$ of $M \times D^n$ that is \emph{in general position} with respect to the fibers $M \times\{t\}$ (recall \cref{def:genposTriang}). This may be enough to deduce that $T$ induces a triangulation on each fiber. Moreover, perhaps this can be used to show that every section that is piecewise with respect to $T$ defines piecewise smooth sections fiberwise. This ingredient may be enough to address the connectivity of the map $\iota$.

Instead of pursuing this, we explain in the next section how a weak homotopy equivalence of simplicial sets follows almost immediately from our earlier results.

\subsection{A weak homotopy equivalence of simplicial sets} \label{sec:whesSet}

We now turn our attention to the inclusion $\sSolPS \hookrightarrow \Sing(\Gamma^0(E))$ of the simplicial set of piecewise solutions into the simplicial set of continuous sections $E$.

We denote by $\Gamma^0(E)$ the space of continuous sections of $E$, endowed with the $C^0$-topology. We recall that $\Sing(\Gamma^0(E))$ is defined by $(\Sing(\Gamma^0(E)) )_n =  \{\Delta^n \to \Gamma^0(E)\}$, where the latter set is in bijection with the set $\{s: M \times \Delta^n \to q_n^*(E) \mid s \text{ continuous section}\}$ and $q_n: M \times \Delta^n \to M$ denotes the projection onto the first factor. The simplicial set $\sSolPS$ of piecewise solutions we define as the following.
\begin{definition} \label{def:sSolPS}
$\sSolPS \in \sSet$ is the simplicial set with  $n$-simplices 
	\begin{equation*}
		(\sSolPS)_{n} = \left\{ s: M \times \Delta^n \to q_n^*(E) \mid s \text{ is a piecewise smooth solution of } q_n^*(\SR) \right\}.
	\end{equation*}
The face map $\face{i}:(\sSolPS)_n \to (\sSolPS)_{n-1}$ is given by restricting the solution to $M \times F_i $ where $F_i$ is the face of $\Delta^n$ excluding the $i$th vertex, and by identifying this face $F_i$ with $\Delta^{n-1}$. The degeneracy map $\degen{i}:(\sSolPS)_n \to (\sSolPS)_{n+1}$ is induced by the pullback along the retraction $(\id, r_i) : M \times \Delta^{n+1} \to M \times \Delta^n$ where $r_i$ is the affine map collapsing the edge $\langle i, i+1 \rangle$.
\end{definition}

We make two remarks regarding the above definition. First, from the perspective of jiggling, it does not matter that we now consider solutions of the pullback relation $q_n^*(\SR)$. If $\SR \subset J^1(E)$ is an open and fiberwise dense relation, the relation $q_n^*(\SR) \subset J^1(q_n^*(E))$ is as well. Secondly, we observe that $\sSolPS$ is a Kan complex.
\begin{lemma} \label{lem:ssSolisKan}
    The simplicial set $\sSolPS$ is a Kan complex.
\end{lemma}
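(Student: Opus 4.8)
The plan is to verify the horn-filling condition of \cref{def:wheTriangleHomotopy}, which for $\sSolPS$ reduces to showing that any map $\Lambda[n,k] \to \sSolPS$ extends along $\Lambda[n,k] \hookrightarrow \Delta[n]$. Unpacking the Yoneda correspondence, a map $\Lambda[n,k] \to \sSolPS$ is a piecewise smooth solution $s$ of the pulled-back relation, defined over $M \times |\Lambda^n_k| \subset M \times \Delta^n$, where $|\Lambda^n_k|$ is the geometric $k$-horn. The task is to extend $s$ to a piecewise smooth solution over all of $M \times \Delta^n$, without modifying it over the horn.

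First I would observe that the horn inclusion $|\Lambda^n_k| \hookrightarrow \Delta^n$ is a deformation retract; concretely there is a piecewise linear retraction $\rho : \Delta^n \to |\Lambda^n_k|$ (project away from the vertex opposite the missing face, or more carefully, use the standard affine retraction of the simplex onto its horn). Pulling back $s$ along $\id_M \times \rho$ gives a continuous section $(\id_M \times \rho)^* s$ over $M \times \Delta^n$ which agrees with $s$ over $M \times |\Lambda^n_k|$. However, this pulled-back section is in general only piecewise smooth with respect to a \emph{degenerate} triangulation (the retraction collapses a positive-dimensional face), and crucially its $1$-jet need not lie in $\SR$ over the collapsed directions — a constant-along-a-direction section typically fails fiberwise density in that direction. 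So the pulled-back section is a continuous section extending $s$, but not yet a solution.

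The second step is then to apply jiggling relative to the horn: invoke \cref{th:jigglingMfd} (in its relative form) with ambient manifold $M \times \Delta^n$, bundle $q_n^*(E)$, relation $q_n^*(\SR)$ (which is open and fiberwise dense, as noted just before the lemma), starting section the pulled-back section above, and the subcomplex $Q$ taken to be a neighborhood of $M \times |\Lambda^n_k|$ — more precisely, one first thickens so that the pulled-back section is genuinely a solution on $\Op(Q)$. Here is a subtlety: the pulled-back section need not be a solution on an open neighborhood of the horn, only on the horn itself. To fix this, I would instead jiggle $s$ only over a slightly smaller sub-horn (or shrink via a collar), arranging that on the collar neighborhood of the boundary-of-the-horn the section is already a genuine solution; then apply relative jiggling over the complementary region. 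Alternatively, and more cleanly, one can first extend $s$ to a solution over an open neighborhood of $|\Lambda^n_k|$ in $\Delta^n$ by a small local application of jiggling/slope-perturbation near the horn before doing the global extension — using that $s$ is already a solution, hence lies in the open set $\SR$, so a sufficiently small extension stays in $\SR$.

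The main obstacle I anticipate is precisely this boundary-collar bookkeeping: \cref{th:jigglingMfd} keeps a section fixed exactly on a subcomplex $Q$ where it is \emph{already} a solution on a neighborhood, whereas here our given data $s$ is a solution on the horn but the naive extension is not a solution on any neighborhood of it inside the larger simplex. Once that is handled — either by a preliminary collar extension of $s$ to a genuine solution on $\Op(|\Lambda^n_k|)$, or by jiggling relative to a slightly retracted horn and using a collar to glue — the conclusion is immediate: jiggling produces a piecewise smooth solution $s'$ of $q_n^*(\SR)$ over $M \times \Delta^n$ with $s'|_{M \times |\Lambda^n_k|} = s$, which is exactly the required filler $\Delta[n] \to \sSolPS$. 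Since $n \geq 1$ and $k$ were arbitrary, $\sSolPS$ is a Kan complex.
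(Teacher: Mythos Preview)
Your approach is essentially the same as the paper's: extend the given solution $s$ on $M \times \Lambda^n_k$ to a piecewise smooth section $\tilde s$ over $M \times \Delta^n$ (with respect to a triangulation extending the given one), then apply the relative jiggling of \cref{th:jigglingMfd}. The paper dispenses with your main worry in a single line (``We assume that $\tilde s$ is a solution on $\Op(M \times \Lambda^n_k)$''), and this is justified for a reason you overlook: since $q_n^*(\SR)$ is the pullback of $\SR \subset J^1(E)$ along $q_n : M \times \Delta^n \to M$, membership depends only on the $M$-component of the $1$-jet and imposes no constraint whatsoever on the $\Delta^n$-derivatives. Hence \emph{any} piecewise smooth extension of $s$ automatically has its $M$-jet in $\SR$ along the horn, and by openness of $\SR$ is therefore a solution on a neighbourhood --- your proposed collar manoeuvres and preliminary jigglings are unnecessary. (Your phrase ``a constant-along-a-direction section typically fails fiberwise density in that direction'' conflates a property of the relation with a property of a section; in any case constancy in the $\Delta^n$-directions is perfectly compatible with $q_n^*(\SR)$.)
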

\begin{proof}
    Let a morphism $\Lambda[n,k] \to \sSolPS$ be given. This means we are given a piecewise smooth solution $s$ of $\SR$ on $ M  \times \Lambda^n_k$ which is piecewise smooth with respect to a triangulation $T$. 
    Extend $s$ to a piecewise smooth section $\tilde{s}$ of $E$ over $M \times \Delta ^n$ such that it is piecewise smooth with respect to a triangulation $\tilde{T}$ that extends $T$. We assume that $\tilde{s}$ is a solution on $\Op(M \times \Lambda^n_k)$. Now we jiggle $(\tilde{s},\tilde{T})$ relative to $M \times \Lambda^n_k$ using \cref{th:jigglingMfd}, which gives the requested extensions of $s$.
\end{proof}

To establish a weak homotopy equivalence between $\sSolPS$ and $\Sing(\Gamma^0(E))$, we just need to unpack what this entails in terms of sections of the bundle $E$. The statement will then follow almost immediately from jiggling as in \cref{th:jigglingMfd}. The reason is that the parametric version of jiggling, in this simplicial sense, is just a specific case of the non-parametric case. In the proof we will use the fact that any triangulation of $M \times \partial \Delta^n$ can be extended to a triangulation of $M \times \Delta^n$; this follows from the contractibility of the space of triangulations~\cite{lurie2009topics}. 

\wheSSets
\begin{proof}
According to \cref{def:wheTriangleHomotopy}, we have to show that, given a diagram of the form 
	\[ \begin{tikzcd}
		\partial \Delta[n] \arrow{r} \arrow{d} & \sSolPS \arrow{d} \\%
		\Delta[n] \arrow{r} \arrow[dashrightarrow]{ur} & \Sing(\Gamma^0(E)),
	\end{tikzcd}
	\]
	there exists a lift $\Delta[n] \to \sSolPS$ such that the upper triangle commutes and the lower triangle commutes up to homotopy.
	
	Hence we assume that we are given a continuous section $F: M \times \Delta^n \to q_n^*(E)$ such that $F$ restricted to $M \times \partial \Delta^n$ is a piecewise smooth solution of $q_n^*(\SR)$. Then we need to find a section $G: M \times \Delta^n \to q_n^*(E)$ such that
	\begin{itemize}
		\item $G$ is a piecewise smooth solution of $q_n^*(\SR)$, and
		\item $G$ is homotopic to $F$ via continuous sections relative to $M \times \partial \Delta^n$.
	\end{itemize}
	
	The construction of $G$ consists of two steps. Firstly, we approximate $F$ by a section $F'$ that is piecewise smooth such that $F'$ is a solution on $\Op(M \times \partial \Delta^n)$ and such that $F'$ agrees with $F$ on $M \times \partial \Delta^n$. Secondly, we apply jiggling of \cref{th:jigglingMfd} to $F'$ relative to $M \times \partial \Delta^n$ to end up with $G$. If $F'$ and $F$, and in turn $G$ and $F'$ are sufficiently close, they are all homotopic via continuous sections.
\end{proof}

\begin{remark}
    We want to remark that it is also an option to define $\sSolPS$ as consisting of pairs $(s,T)$ where additionally $s$ is piecewise smooth with respect to the triangulation $T$ of $M \times \Delta^n$. The face map can be defined as the restriction. One way of defining the degeneracy maps is noting that the $n$-simplices and the face maps form a semi-simplicial set that is Kan, which can hence be endowed with degeneracy maps (and degenerate simplices) which make it into a simplicial set $\sSolPST$~\cite{rourke1971delta}. More concretely, the simplicial set can be described as the left Kan extension of the semi-simplicial set. With minor adaptations the proof of \cref{th:wheSSets} follows through, proving that the inclusion $\sSolPST \hookrightarrow \Sing(\Gamma^0(E))$ is a weak homotopy equivalence. In particular this recovers that the fibers of the forgetful functor $\sSolPST \to \sSolPS$ are contractible.
\end{remark}

	\section{Applications} \label{sec:app}

In this section we discuss two applications of jiggling. We start with maps that are transverse to a distribution in \cref{sec:appTrans}, which has applications to both immersions and submersions. After that we discuss contact forms in \cref{sec:appContact}. Finally, in \cref{sec:GenPosDef} we discuss how jiggling can be applied to triangulation to become in general position to a distribution.

\subsection{Transversality}\label{sec:appTrans}

As a first application we show that using jiggling every map can be made to be piecewise transverse to a distribution. For compact (subsets of) manifolds this follows from Thurston's jiggling lemma, but here we also deal with the non-compact case.
\begin{corollary}\label{cor:appTransMap}
	Let $\epsilon: M \to \R_+$ be given, let $f: M \to N$ be a piecewise smooth map and $\xi$ a distribution of constant rank on $N$. Then there exists a map $g : M \to N$ that is transverse to $\xi$ and satisfies $\dist1 (f,g)<\epsilon$. In particular we can choose $f$ and $g$ to be homotopic as piecewise smooth maps.
\end{corollary}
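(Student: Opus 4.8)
The plan is to recognise \cref{cor:appTransMap} as a direct instance of \cref{th:jigglingMfd}, applied to the trivial fiber bundle $E\coloneq M\times N\to M$, whose sections are exactly the maps $M\to N$. Under this identification $J^1(E)=J^1(M,N)$, and the front projection $\pif\colon J^1(E)\to E$ sends a $1$-jet $(x,y,L)$ --- where $L\in\Hom(T_xM,T_yN)$ --- to $(x,y)$. I would encode ``transverse to $\xi$'' by the relation $\SRtransv\subset J^1(M,N)$ consisting of those $(x,y,L)$ for which the composite $T_xM\xrightarrow{L}T_yN\to(TN/\xi)_y$ has maximal rank $\min(\dim M,\codim\xi)$; this is literal transversality $L(T_xM)+\xi_y=T_yN$ when $\dim M\ge\codim\xi$, and injectivity modulo $\xi_y$ otherwise. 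The corollary then reduces to checking that $\SRtransv$ is open and fiberwise dense and feeding it to \cref{th:jigglingMfd}.

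First I would verify these two hypotheses. Since $\xi$ has constant rank, $TN/\xi$ is a genuine vector bundle over $N$, so the composite above varies continuously with $(y,L)$; as maximality of the rank of a linear map is an open condition, $\SRtransv$ is open. For fiberwise density (in the sense of \cref{def:fiberweiseDense}), fix a point $(x,y)\in E$; the $\pif$-fiber over it is the vector space $\Hom(T_xM,T_yN)$, and choosing a complement to $\xi_y$ in $T_yN$ splits it as $\Hom(T_xM,\xi_y)\oplus\Hom(T_xM,(TN/\xi)_y)$. The maximal-rank-mod-$\xi$ condition only constrains the second summand, where maximal-rank linear maps are (open and) dense since their failure locus is a proper algebraic subvariety; hence the condition is dense in $\Hom(T_xM,T_yN)$.

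Finally I would apply \cref{th:jigglingMfd} to $E=M\times N$, the piecewise smooth section $f$, the empty subcomplex $Q=\emptyset$, and the positive function $\epsilon'\coloneq\min(\epsilon,\epsilon_0)$, where $\epsilon_0$ is chosen small enough --- relative to a fixed cover of a neighborhood of the graph of $f$ in $E$ by fibered charts --- that straight-line interpolation in those charts turns any two $\epsilon_0$-$C^1$-close piecewise smooth maps into the endpoints of a piecewise smooth homotopy. This yields an $\epsilon'$-jiggling $(g,T')$ of $(f,T)$ with $g$ a piecewise smooth solution of $\SRtransv$, hence $g|_\Delta$ transverse to $\xi$ for every simplex $\Delta\in T'$ and $\dist1(f,g)<\epsilon'\le\epsilon$; and since $\dist1(f,g)<\epsilon_0$, the fiberwise straight-line interpolation over the simplices of $T'$ (read in the chosen charts) is a homotopy through piecewise smooth maps from $f$ to $g$. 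I do not expect a genuine obstacle here: the substance is entirely carried by \cref{th:jigglingMfd}, and what remains is bookkeeping; the only points needing care are that ``transverse to $\xi$'' must be read simplexwise and, in the range $\dim M<\codim\xi$, as maximal rank modulo $\xi$ (which is precisely what keeps $\SRtransv$ fiberwise dense rather than empty), and that the $C^1$-estimate must be kept small enough to run the interpolation homotopy, which only costs replacing $\epsilon$ by $\min(\epsilon,\epsilon_0)$.
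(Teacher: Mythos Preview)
Your proposal is correct and follows exactly the same approach as the paper: identify maps $M\to N$ with sections of the trivial bundle $M\times N\to M$, observe that transversality to $\xi$ defines an open and fiberwise dense relation in $J^1(M\times N)$, and apply \cref{th:jigglingMfd}. The paper's proof is a single sentence to this effect; you have simply supplied the details (the verification of openness and fiberwise density, and the interpolation homotopy) that the paper leaves implicit.
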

\begin{proof}
    A map $M \to N$ induces a section $M \to M \times N$, which we can jiggle using \cref{th:jigglingMfd} to be transverse to $\xi$, since this is an open and fiberwise dense relation on $J^1(M \times N)$.
\end{proof}

In particular this result can be applied to maps of maximal rank by the observation that a map $M \to N$ is of maximal rank if and only if the induced section $M \to M \times N$ is transverse to the horizontal distribution.
\begin{corollary}\label{cor:appMapMaxRank}
	Let $\epsilon: M \to \R_+$ and let $f:M \to N$ be a smooth map. Then there exists a piecewise smooth map $g: M \to N$ such that $g$ is a piecewise embedding of maximal rank and $\dist1 ( g , f) < \epsilon$. In particular we can choose $f$ and $g$ to be homotopic as piecewise smooth maps.
\end{corollary}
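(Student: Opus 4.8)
The plan is to realise ``maximal rank'' as an open, fiberwise dense differential relation and feed it into \cref{th:jigglingMfd}, exactly in the spirit of the preceding observation and of the proof of \cref{cor:appTransMap}. Let $\SR_{\max}\subset J^1(M,N)$ be the set of $1$-jets whose linear part $T_xM\to T_yN$ has rank $\min(\dim M,\dim N)$; recall that $J^1(M,N)$ is the $1$-jet bundle of sections of $E:=M\times N\to M$. Openness of $\SR_{\max}$ is immediate, since the rank of a linear map is lower semicontinuous. For fiberwise density, the fibre of $\pif\colon J^1(M,N)\to M\times N$ over $(x,y)$ is $\Hom(T_xM,T_yN)\cong\Hom(\R^m,\R^n)$, inside which the maximal-rank locus is the complement of the common vanishing locus of all $(\min(m,n)+1)$-minors, hence open and dense; equivalently, any linear map can be made of maximal rank by an arbitrarily small perturbation. (When $\dim M\ge\dim N$ one may instead simply invoke \cref{cor:appTransMap}, because a map $M\to N$ is a submersion precisely when its graph section $M\to M\times N$ is transverse to the horizontal distribution $TM\oplus 0$, as noted above.)

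Next I would apply \cref{th:jigglingMfd} with the bundle $E=M\times N$, the relation $\SR_{\max}$, the given function $\epsilon$ (shrunk at the outset to absorb the fixed metric comparisons below), the graph section $s=\hat f\colon x\mapsto(x,f(x))$, and $Q=\emptyset$. This yields an $\epsilon$-jiggling $(\hat g,T')$ with $\hat g$ a piecewise smooth solution of $\SR_{\max}$ and $\dist{1}(\hat f,\hat g)<\epsilon$. Being a section of $M\times N\to M$, the map $\hat g$ is the graph of a piecewise smooth map $g\colon M\to N$, and $j^1\hat g\in\SR_{\max}$ says exactly that $dg|_\Delta$ has maximal rank for every top simplex $\Delta\in T'$; thus $g$ is a piecewise immersion if $\dim M\le\dim N$ and a piecewise submersion if $\dim M\ge\dim N$. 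The estimate $\dist{1}(f,g)<\epsilon$ follows from $\dist{1}(\hat f,\hat g)<\epsilon$ up to the fixed bounded factor relating the chosen metrics, and $f\simeq g$ through piecewise smooth maps follows by linearly interpolating between $\hat f$ and $\hat g$ in fibered charts, as in the proof of \cref{cor:appTransMap}, and then projecting to $N$.

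It remains to upgrade ``maximal rank on each simplex'' to ``piecewise embedding''. Since restricting a holonomic section to a subsimplex keeps it holonomic with jet still in $\SR_{\max}$, the map $g$ remains a piecewise smooth solution with respect to any subdivision $T''$ of $T'$. Choosing $T''$ fine enough that, in a fibered chart, every top simplex lies in a small convex ball, a Hadamard-type mean value estimate shows that the immersion $g|_\Delta$ is injective on $\Delta$: if $\sup_{x\in\Delta}\|dg_x-dg_{x_0}\|$ is small compared with the injectivity modulus of $dg_{x_0}$, then $|g(x)-g(y)|\geq c\,|x-y|$ for $x,y\in\Delta$ and some $c>0$. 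Hence each $g|_\Delta$ is an embedding, and its faces are embedded as restrictions; when $\dim M\ge\dim N$ one of course reads ``piecewise embedding'' only in the literal range $\dim M\le\dim N$. I expect this last step to be the main obstacle: injectivity of the pieces is not a first-order differential condition, so it is not produced by jiggling itself and must be recovered by hand via subdivision — and, relatedly, one must be mindful that ``embedding'' is only sensible once $\dim M\le\dim N$.
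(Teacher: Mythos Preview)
Your argument is correct and is essentially the paper's: both turn $f$ into the graph section of $M\times N\to M$, observe that maximal rank is an open, fiberwise dense first-order condition (the paper packages this as transversality of the graph to the horizontal distribution, you verify it directly for $\SR_{\max}$), and apply \cref{th:jigglingMfd}. Your final paragraph on promoting piecewise immersion to piecewise embedding via further subdivision is extra care that the paper's own proof does not take; the paper stops at maximal rank and does not separately address injectivity on simplices.
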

\begin{proof}
	We define the distribution $\xi_H: M \times N \to T(M \times N)$ as the horizontal distribution $\xi_H(x,y) = T_y N$ and we denote by $\tilde{f} : M \to M \times N$ the section induced by $f$. Then we observe that that $f$ has maximal rank if and only if $\tilde{f}$ is transverse to $\xi_H$. Hence we apply \cref{th:jigglingMfd} to $f$ which results in a piecewise smooth map $\tilde{g}: M \to M \times N$ which is transverse to $\xi_H$. We can assume $\tilde{g}$ to be a section if we take $g$ and $f$ to be close enough. The requested map $g$ is then the one induced by $\tilde{g}$.
\end{proof}

In the above examples we also obtain a weak homotopy equivalence using \cref{th:wheSSets}. Here we denote the relation that a map $M \to N$ is transverse to a given distribution by $\SRtransv$, the relation that the map is an immersion by $\SRimm$ and that it is a submersion by $\SRsubm$.

\wheMaps

\subsection{Contact forms} \label{sec:appContact}

Throughout this section we assume that $M$ is a $(2n+1)$-dimensional manifold. We recall that a contact form\footnote{Results for contact structures analogous to the ones we state here for contact forms also hold by taking the projectivization.} is a $1$-form $\alpha \in \Omega^1(M)$ such that $\alpha \wedge (d \alpha)^n \neq 0$. Equivalently, a contact form is a section $\alpha: M \to \Lambda^1 (T^*M)$ whose $1$-jet satisfies the contact relation $\SRcont \subset J^1(\Lambda^1 (T^*M) \setminus\{0\})$. Note that we restrict to $\Lambda^1 \left(T^*M \right)\setminus\{0\} $ since the contact relation is not fiberwise dense when considering also vanishing $1$-forms.

\begin{corollary} \label{cor:contactPi0}
	Let $\epsilon : M \to \R_+ $ be given and let $M$ be a manifold of odd dimension with a non-vanishing $1$-form $\alpha \in \Omega^1(M)$. Then there exists a piecewise smooth contact form $\tilde{\alpha} \in \Omega^1(M)$ such that $\dist1 (\alpha,\tilde{\alpha}) < \epsilon$. In particular, we can choose $\alpha$ and $\tilde{\alpha}$ to be homotopic as piecewise smooth $1$-forms.
\end{corollary}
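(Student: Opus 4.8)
The plan is to check that the contact relation $\SRcont \subset J^1(\Lambda^1(T^*M)\setminus\{0\})$ is open and fiberwise dense, and then to feed the section $\alpha$ into \Cref{th:jigglingMfd}. Openness is immediate: the contact condition $\alpha \wedge (d\alpha)^n \neq 0$ is the non-vanishing of a universal polynomial in the entries of $j^1\alpha$ at each point, hence cuts out an open subset of the jet space.

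The only step with actual content is fiberwise density, so that is where I would spend my effort. Fix $p \in M$ and a nonzero covector $\alpha_0 \in T^*_pM$; I must show $\SRcont$ is dense in the $\pif$-fiber over $\alpha_0$. Working in local coordinates and writing $\alpha = \sum_i a_i\,dx_i$, a $1$-jet in this fiber is recorded by the fixed value $(a_i(p)) = \alpha_0$ together with free derivative data $D = (\partial_j a_i(p))$. The key observation is that $d\alpha(p)$ depends only on the skew part $\omega(D) := \tfrac12\sum_{i,j}(\partial_i a_j - \partial_j a_i)(p)\, dx_i \wedge dx_j$, and $D \mapsto \omega(D)$ is a linear surjection onto the space $\Lambda^2 T^*_pM$ of alternating $2$-forms. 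So the contact condition on our $1$-jet is $\alpha_0 \wedge \omega(D)^n \neq 0$, a condition on $\omega(D)$ alone. The set $U := \{\omega \in \Lambda^2 T^*_pM \mid \alpha_0 \wedge \omega^n \neq 0\}$ is nonempty: since $\dim M = 2n+1$, the hyperplane $\ker \alpha_0$ is $2n$-dimensional and carries a symplectic form, which extends to a $2$-form $\omega$ on $T_pM$ with $\alpha_0 \wedge \omega^n \neq 0$. Hence $\omega \mapsto \alpha_0 \wedge \omega^n$ is a nonzero polynomial map $\Lambda^2 T^*_pM \to \Lambda^{2n+1}T^*_pM$, so $U$ is open and dense; pulling back along the open surjective map $D \mapsto \omega(D)$ shows the contact $1$-jets are open and dense in the $\pif$-fiber over $\alpha_0$, which is exactly fiberwise density.

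With $\SRcont$ open and fiberwise dense, I would apply \Cref{th:jigglingMfd} to the section $\alpha : M \to \Lambda^1(T^*M)\setminus\{0\}$, taking $Q = \emptyset$ and the given function $\epsilon$ (shrunk if necessary so that $\epsilon(x)$ is below the norm of $\alpha(x)$ in a fixed atlas of fibered charts). This yields an $\epsilon$-jiggling $(\tilde\alpha, T')$ with $\tilde\alpha$ a piecewise smooth solution of $\SRcont$, i.e.\ a piecewise smooth contact form, and $\dist1(\alpha,\tilde\alpha) < \epsilon$. Finally, because $\epsilon$ was taken small, linear interpolation in those fibered charts over the simplices of $T'$ never meets the zero section, producing a homotopy through piecewise smooth non-vanishing $1$-forms from $\alpha$ to $\tilde\alpha$ — this is the same observation already used for the $\pi_0$-statement at the start of \Cref{sec:whe}. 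The main obstacle, then, is purely the linear-algebra bookkeeping in the fiberwise density step; everything else is a direct citation of \Cref{th:jigglingMfd}.
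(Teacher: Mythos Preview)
Your proposal is correct and follows exactly the route the paper intends: the paper states \Cref{cor:contactPi0} without proof, treating it as an immediate application of \Cref{th:jigglingMfd} once one knows $\SRcont$ is open and fiberwise dense (the paper only remarks that fiberwise density fails at the zero section, tacitly asserting it holds away from it). Your explicit verification of fiberwise density via the surjection $D \mapsto \omega(D)$ and the nonvanishing-polynomial argument is a welcome elaboration of what the paper leaves implicit, and your handling of the homotopy via linear interpolation in fibered charts matches the argument at the start of \Cref{sec:whe}.
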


We compare the above result to h-principles \emph{with} homotopical assumptions for contact forms, that is, h-principles which compare the space of solutions (here: contact forms) to the space of formal solutions (here: formal contact forms). Recall that the space of formal contact forms consists of pairs $(\alpha, \beta) \in \Omega^1(M) \times \Omega^2(M)$ such that $\alpha \wedge \beta^n \neq 0$. 

Gromov~\cite{gromov1969stable} proved that if $M$ is an open manifold the complete h-principle holds. It tells us in particular, if we restrict ourselves to the case of $\pi_0$-surjectivity, that any formal contact form $(\alpha,\beta)$ is homotopic to a contact form $(\tilde{\alpha},d\tilde{\alpha})$ via formal contact forms in an open manifold. If we compare with \cref{cor:contactPi0}, we see that jiggling only requires a non-degenerate $1$-form $\alpha$ instead of also a $2$-form $\beta$ such that together they form a formal contact form. However, the output of jiggling is also just a piecewise smooth contact form instead of a smooth one.

For closed manifolds $\pi_0$-surjectivity was shown to hold by Martinet~\cite{Mart} and Lutz~\cite{Lutz}, but $\pi_0$-injectivity was proven to be false by Bennequin~\cite{Be}. Hence for closed manifolds the full h-principle does not hold, unless we restrict ourselves to \emph{overtwisted} contact structures as shown by Eliashberg~\cite{El89}.

Hence we see that the main strength of jiggling is that it also applies to closed manifolds. The full h-principle we prove, albeit without homotopical assumptions and for simplicial sets, holds true both for open and closed manifolds. 

\appContactWhe

\subsection{General position via jiggling} \label{sec:JigglingGenPos}

Our method of jiggling can also be used to construct triangulations in general position with respect to a distribution, which was Thurston's original statement. However, this claim does not follow directly from the statement of \cref{th:jigglingMfd}. The difficulty is that, even though the differential relation encoding general position lives in the first jet space, it depends on the triangulation.

\subsubsection{General position} \label{sec:GenPosDef}

We first recall:
\begin{definition}
	A smooth map $f: M \to N$ is \textbf{transverse to a distribution} $\xi$ of rank $r$ on $N$ if for all $x \in M$ the space $(df)_x(TM) + \xi_{f(x)}$ is maximal. That is, we require that 
	\[ \dim \left((df)_x(TM) + \xi_{f(x)}\right) =  \min\{n, m + r\}, \]
	where $m$ and $n$ denote the dimensions of $M$ and $N$ respectively.

    If $f$ is piecewise smooth, we require that the above holds on every top-dimensional simplex.
\end{definition}

Observe that this does not imply that the map restricted to a lower dimensional simplex is also transverse to the distribution. 
Hence we consider a stronger notion than transversality, which is known as general position. This requires not just the top-dimensional simplices to be transverse but also all lower dimensional ones. Consequently, it also requires that the distribution does not vary too much within a simplex. 

\begin{figure}[h]
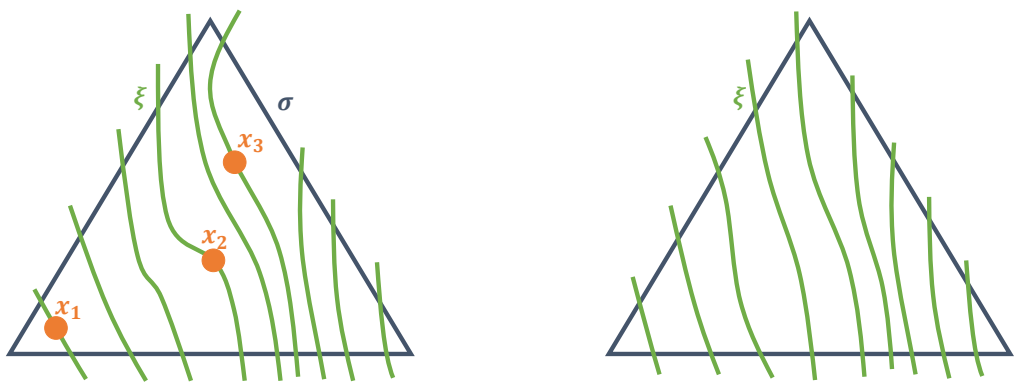

    \centering
    \begin{subfigure}[t]{0.45\linewidth}
        \centering
        \includegraphics[width=\linewidth,page=2]{Fig_genjiggling}
        \caption{Here the simplex is not in general position, because the edge $\sigma$ is not transverse to $\folconst{ \xi_{x_i}}$.}
        \label{fig:exNotInGenPos}
    \end{subfigure}
    \hspace{0.05\linewidth}
    \begin{subfigure}[t]{0.45\linewidth}
        \centering
        \includegraphics[width=\linewidth,page=3]{Fig_genjiggling}
        \caption{Here the simplex is in general position with respect to $\xi$, because the three edges are transverse to $\folconst{\xi_x}$ for each $x\in \Delta^2$.}
        \label{fig:exInGenPos}
    \end{subfigure}
	\centering
	\caption{We give two examples of a distribution $\xi$ on $\Delta^2$ such that $\Delta^2$ is (not) in general position with respect to $\xi$.} \label{fig:genpos}
\end{figure}

In the following definition we identify a top-dimensional linear simplex $\Delta$ in a simplicial complex $K$ with the standard simplex $\Delta^m \subset \R^m$. We also use the fact that any plane $V$ in a tangent space $T_x\R^m$ can be extended, using the Euclidean parallel transport, to a foliation over the whole of $\R^m$. We will denote this foliation by $\folconst{V}$. We illustrate the definition in \cref{fig:genpos}.
\begin{definition} \label{def:genpos}
Let $f: M \to N$ be a map that is piecewise smooth with respect to a triangulation $T: |K| \rightarrow M$. Let $\xi$ be a distribution of constant rank on $N$. Then $f$ is \textbf{in general position} with respect to $\xi$ if:
\begin{itemize}
    \item $f$ is transverse to $\xi$, and
    \item for each top-dimensional simplex $\Delta \in K$ and each face $\Delta'$, we have that $\Delta' \trans \folconst{(f \circ T)^*\xi_x}$ for all $x \in (f \circ T)(\Delta)$.
\end{itemize}   
\end{definition}

As a special case of the above definition we can also define what it means for a triangulation $T$ to be in general position.
\begin{definition}\label{def:genposTriang}
Let $T: |K| \to M$ be a triangulation and let $\xi$ be a distribution of constant rank on $M$. Then $T$ is in general position with respect to $\xi$ if $\id: M \to M$ is in general position with respect to $\xi$ and $T$.
\end{definition}
The general position condition constrains maps $M \to N$ in a manner that depends on the chosen triangulation $T$. In particular, a map in general position may stop being so upon subdivision (see \cref{fig:SubdivGenPos}). Jiggling on the other hand relies on being able to take sufficiently fine crystalline subdivisions $T_\ell$ of $T$. Hence this is something that we must address.

\begin{figure}[h]
	\includegraphics[width=0.5\textwidth,page=6]{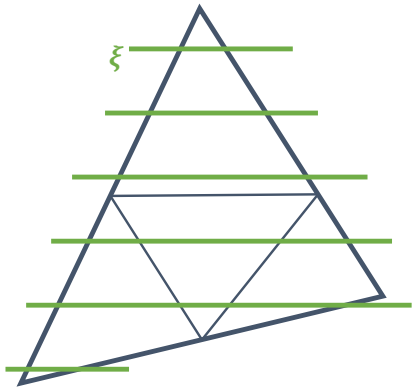}
	\centering
	\caption{Example of a triangulation in general position with respect to the vertical distribution $\xi$ whose subdivision is not.} \label{fig:SubdivGenPos}
\end{figure}

\subsubsection{Jiggling} \label{sec:JigglingGenPosTriang}

We now fit general position into our jiggling framework. We are given a map $f: M \to N$ that is piecewise smooth with respect to a triangulation $T: K \rightarrow M$ and we fix a distribution $\xi$ of constant rank on $N$. In order to jiggle, we introduce the notation $\SC_{\Delta}$ for the collection of model simplices for a single top-dimensional simplex $\Delta \in K$ under generalized crystalline subdivisions of $K$. That is, each simplex in a generalized crystalline subdivision of $K$ that is a subset of $\Delta$ equals a model simplex in $\SC_\Delta$ up to translation and scaling. The scaling factor is at this moment of no importance. 

The collection $\SC_\Delta$ is finite for each $\Delta$, as we can construct it as in the proof of \cref{prop:crystallineConeOff}: by \cref{lem:csubdivExact} there exists a finite set of model simplices $\SC'_\Delta$ for crystalline subdivisions of $\Delta$. Now given a model simplex $\Delta'\in \SC'_\Delta$, we apply crystalline subdivision at most once to each of its faces. We then use the barycentric cone off to extend the subdivision of the boundary of $\Delta'$ to $\Delta'$ itself. We add each of the resulting simplices to $\SC'_\Delta$, to define the finite collection $\SC_\Delta$. This motivates the following definition, of a notion stronger than general position.

\begin{definition}
Let $M$ and $N$ be smooth manifolds and fix a triangulation $T: K \to M$. We define the \textbf{very general position} relation $\SRverygenpos \subset J^1(M,N)$ as follows. A jet $\sigma \in J^1(M,N)$ with front projection $ (x,y) = \pi_f(\sigma) \in M \times N$ belongs to $\SRverygenpos$ if
\begin{itemize}
\item for every $\Delta \in K$ with $x \in T(\Delta)$, and
\item every simplex $\Delta'$ in $\SC_\Delta$,
\end{itemize}
it holds that $\Delta'$ is transverse to $\folconst{(\sigma \circ T)^*\xi_y}$.
\end{definition}
Here we use the fact that the pullback of a plane field, at a point, only depends on the plane field and the first jet of the map at said point. We now observe the following.

\begin{lemma} \label{lem:genposIsSFDL}
Fix $M$, $N$, $K$, and $\xi$ as above, and fix a generalized crystalline subdivision $K'$ of $K$. Assume that $\SC_\Delta$ is finite for each top dimensional simplex $\Delta \in K$. Then the following hold:
\begin{itemize}
	\item The relation $\SRverygenpos$ is open and fiberwise dense.
	\item If $s': M \rightarrow N$ is a piecewise smooth solution of $\SRverygenpos$ with respect to $K'$, then it is in general position with respect to all generalized crystalline subdivisions $K''$ of $K$ that also subdivide $K'$.
\end{itemize}
\end{lemma}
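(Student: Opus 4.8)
The plan is to treat the two assertions separately; both rest on one elementary reformulation of transversality to a parallel‑plane foliation. For a fixed linear simplex $\Delta'\subset\R^m$ and a jet $\sigma\in J^1(M,N)$ with $\pi_f(\sigma)=(x,y)$ and $x\in T(\Delta)$, the condition ``$\Delta'\trans\folconst{(\sigma\circ T_\Delta)^*\xi_y}$'' is equivalent to the linear composite
\[
\vec{\Delta'}\hookrightarrow\R^m\xrightarrow{\,d(\sigma\circ T_\Delta)\,}T_yN\twoheadrightarrow T_yN/\xi_y
\]
having maximal rank $\min(\dim\Delta',\codim\xi)$; this matrix depends continuously on $\sigma$ and, within a single front fibre, linearly on the slope of $\sigma$. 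Since $K$ is locally finite, only finitely many $\Delta$ satisfy $x\in T(\Delta)$, and each $\SC_\Delta$ is finite by hypothesis, so $\SRverygenpos$ is locally cut out by finitely many such maximal‑rank conditions.

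For \emph{openness} I would fix $\sigma\in\SRverygenpos$ with $\pi_f(\sigma)=(x,y)$; by local finiteness $x$ has a neighbourhood meeting only those $T(\Delta)$ that contain $x$, so any $\sigma'$ near $\sigma$ is subjected to a subcollection of the conditions imposed on $\sigma$, each of which is open and holds at $\sigma$. Intersecting the finitely many resulting neighbourhoods shows $\SRverygenpos$ is open. For \emph{fiberwise density} I would fix $(x,y)\in M\times N$ and identify $\pi_f^{-1}(x,y)$ with the vector space $\Hom(T_xM,T_yN)$ of slopes; for each of the finitely many relevant pairs $(\Delta,\Delta')$ the slopes failing the corresponding condition form the common zero locus of the maximal minors of a matrix depending linearly on the slope, hence a proper algebraic subset (generic slopes attain the maximal rank). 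A finite union of proper algebraic subsets is nowhere dense, so its complement $\SRverygenpos\cap\pi_f^{-1}(x,y)$ is dense. The transversality of $s'$ to $\xi$ demanded by the first bullet of \cref{def:genpos} is itself an open, fiberwise dense relation, and I would simply impose it alongside $\SRverygenpos$ (in the triangulation case $N=M$ with $s'$ close to $\id$ it is automatic).

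For the \emph{second assertion}, let $K''$ be a generalized crystalline subdivision of $K$ refining $K'$; since $K''$ refines $K'$, the section $s'$ is piecewise smooth with respect to $K''$. Given a top simplex $D\in K''$ and a face $D'$ of it, $D$ is full‑dimensional, hence lies in a unique top simplex $\Delta\in K$ and in a unique top simplex $E\in K'$ (two distinct top simplices of a simplicial complex meet in a face of positive codimension). Identifying $\Delta$ with $\Delta^m$ via $T_\Delta$, the defining property of generalized crystalline subdivisions gives that $D'$ — being a simplex of $K''$ contained in $\Delta$ — agrees, up to a translation and a rescaling, with a model simplex $\tilde D'\in\SC_\Delta$. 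The parametrisation $T_D$ of $D$ used in \cref{def:genpos} differs from the one induced by $T_\Delta$ by an affine automorphism of $\Delta^m$, and transversality of a linear simplex to $\folconst{V}$ depends only on the directions of the simplex and of $V$, which are unchanged by translation and rescaling and transform equivariantly under linear isomorphisms. Hence for every $q\in D$ the conditions ``$D'\trans\folconst{(j^1s'(q)\circ T_D)^*\xi_{s'(q)}}$'' and ``$\tilde D'\trans\folconst{(j^1s'(q)\circ T_\Delta)^*\xi_{s'(q)}}$'' are equivalent. Because $s'$ solves $\SRverygenpos$ with respect to $K'$, the jet $j^1(s'|_E)$ lands in $\SRverygenpos$, so $j^1s'(q)\in\SRverygenpos$ for every $q\in D\subset E$; and since $q\in T(\Delta)$ and $\tilde D'\in\SC_\Delta$, membership in $\SRverygenpos$ is precisely the transversality of $\tilde D'$ to $\folconst{(j^1s'(q)\circ T_\Delta)^*\xi_{s'(q)}}$. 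Ranging over all $q\in D$ and all faces $D'$ of all top simplices of $K''$ yields general position of $s'$ with respect to $K''$.

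The step I expect to be the main obstacle is this last chart‑matching: $\SRverygenpos$ is phrased intrinsically in the chart of the large simplex $\Delta\in K$ together with its fixed finite family $\SC_\Delta$, whereas general position with respect to $K''$ is phrased in the chart of each small top simplex $D\in K''$ and the faces of the standard simplex. Reconciling them requires carefully tracking that transversality to a parallel‑plane foliation is invariant under the affine reparametrisations and rescalings relating these charts and model simplices — which is exactly the point of building $\SC_\Delta$ from finitely many crystalline subdivisions and barycentric cone‑offs. The remaining ingredients (openness and generic attainability of maximal rank, and point‑set topology using local finiteness of $K$) are routine.
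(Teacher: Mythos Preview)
Your proposal is correct and follows the same approach as the paper's proof, which is a terse two-sentence sketch invoking finiteness of the $\SC_\Delta$ and local finiteness of $K$ for the first item, and the defining property of $\SC_\Delta$ as the set of model simplices for the second. You supply considerably more detail than the paper does --- in particular your explicit rank reformulation for openness and density, and your careful chart-matching between $T_D$ and $T_\Delta$ for the second item, are exactly the content the paper leaves implicit; your observation that the first bullet of \cref{def:genpos} (transversality of $s'$ to $\xi$) is a separate open and fiberwise dense condition not literally contained in $\SRverygenpos$ is a point the paper does not address.
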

\begin{proof}
Openness and fiberwise density follow immediately from the finiteness of all $\SC_\Delta$ and the local finiteness of $K$. The second claim follows from the definition of $\SC_\Delta$ as the collection of model simplices for $\Delta \in K^{(\topd)}$ under generalized crystalline subdivisions of $K$. 
\end{proof}

The following is now a consequence of \cref{cor:jigglingMfd}. We point out that \cref{th:jigglingMfd} cannot be applied to obtain the following result, since the relation $\SRverygenpos$ depends on the model simplices for top-dimensional simplices in $K$, which change throughout the proof of \cref{th:jigglingMfd} due to the coning off of arbitrarily many subdivisions of the boundary.
\begin{corollary} \label{cor:SectionGenPos}
    Let $M$ and $N$ be manifolds, possibly open, where $M$ is  endowed with a smooth triangulation $T: K \to M$ and $N$ with a distribution $\xi$ of constant rank. Then, given
    \begin{itemize}
        \item a continuous function $\epsilon : M \to \R_+$,
        \item a map $s: M \rightarrow N$, and
        \item a subcomplex $Q \subset M$ of $T$ such that $s|_{\Op (Q)}$ is a solution of $\SRverygenpos$,
    \end{itemize}
    there exists an $\epsilon$-jiggling $(s',T')$ of $(s,T)$ such that
    \begin{itemize}
        \item $s'$ is in (very) general position with respect to $\xi$,
        \item $s'|_{Q} = s|_{Q}$, and 
        \item $T'$ is a generalized crystalline subdivision of $T$.
    \end{itemize}
\end{corollary}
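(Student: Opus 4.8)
The statement is meant to be a direct corollary of \cref{cor:jigglingMfd}, once we know (via \cref{lem:genposIsSFDL}) that $\SRverygenpos$ is open and fiberwise dense and that solutions of $\SRverygenpos$ with respect to a generalized crystalline subdivision are in general position. So the proof is essentially a matter of checking that the hypotheses of \cref{cor:jigglingMfd} are met and that its conclusion unpacks to what we want. First I would apply \cref{cor:jigglingMfd} to the fiber bundle $E = M \times N \to M$, the induced section $\tilde{s}: M \to M \times N$ given by $x \mapsto (x, s(x))$, the relation $\SRverygenpos \subset J^1(M,N) = J^1(M \times N)$ (identifying sections of $M \times N$ with maps $M \to N$), the function $\epsilon$, and the subcomplex $Q$ on whose neighborhood $s$ — and hence $\tilde s$ — is a solution of $\SRverygenpos$ by hypothesis. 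By \cref{lem:genposIsSFDL}, $\SRverygenpos$ is open and fiberwise dense, so \cref{cor:jigglingMfd} produces an $\epsilon$-jiggling $(\tilde{s}', T')$ of $(\tilde{s}, T)$ such that $\tilde{s}'$ is a piecewise solution of $\SRverygenpos$ with respect to $T'$, $\tilde{s}'|_Q = \tilde{s}|_Q$, and the subdivision of $K$ underlying $T'$ is a generalized crystalline subdivision of $K$.

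Next I would translate back to maps. If $\epsilon$ is chosen small enough (and since the first coordinate of $\tilde s$ is the identity, only the second coordinate gets perturbed), $\tilde{s}'$ is again a section of $M \times N \to M$, i.e. of the form $\tilde{s}'(x) = (x, s'(x))$ for a piecewise smooth map $s': M \to N$ with $d_{C^1}(s,s') < \epsilon$; this $s'$ is the requested jiggling $(s', T')$ of $(s, T)$. The conditions $s'|_Q = s|_Q$ and ``$T'$ is a generalized crystalline subdivision of $T$'' are immediate from the corresponding conclusions about $\tilde{s}'$. It remains to see that $s'$ is in (very) general position with respect to $\xi$: this is exactly the second bullet of \cref{lem:genposIsSFDL}, applied with $K'$ equal to the generalized crystalline subdivision underlying $T'$ — since $s'$ solves $\SRverygenpos$ with respect to $K'$, it is in general position with respect to every generalized crystalline subdivision of $K$ subdividing $K'$, in particular $K'$ itself.

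**Where the care is needed.** There is essentially no hard analytic step here; the work has all been front-loaded into \cref{cor:jigglingMfd} and \cref{lem:genposIsSFDL}. The one genuine subtlety — and the reason the paragraph preceding the statement insists that \cref{th:jigglingMfd} alone does \emph{not} suffice — is that $\SRverygenpos$ is defined relative to the \emph{model simplices} $\SC_\Delta$ of the \emph{original} top-dimensional simplices $\Delta \in K$ under \emph{all} generalized crystalline subdivisions, and these are a fixed finite collection for each $\Delta$. If one subdivided by barycentric cone-off arbitrarily many times (as the naive non-compact proof of \cref{th:jigglingMfd} might), the relevant collection of model simplices would not stay finite, and openness/fiberwise density of the relation would be lost. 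This is precisely why we must invoke \cref{cor:jigglingMfd} — whose output triangulation is a \emph{generalized} crystalline subdivision, coning off each simplex at most once — rather than \cref{th:jigglingMfd}. Hence, in writing the proof, the one point to state explicitly is that the relation $\SRverygenpos$ appearing throughout the inductive construction in \cref{cor:jigglingMfd} never changes: it is fixed once $K$ and $\xi$ are fixed, because $\SC_\Delta$ depends only on $K$ and not on the running subdivision. With that observation in place, the proof is a two-line citation.

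\begin{proof}
We apply \cref{cor:jigglingMfd} to the trivial fiber bundle $E = M \times N \to M$, using the identification of sections of $M \times N$ with maps $M \to N$ and of $J^1(M \times N)$ with $J^1(M,N)$. The input data is the function $\epsilon$, the section $\tilde{s}: M \to M \times N$, $x \mapsto (x, s(x))$, induced by $s$, the subcomplex $Q$, and the relation $\SRverygenpos \subset J^1(M,N)$; by hypothesis $s|_{\Op(Q)}$, and hence $\tilde{s}|_{\Op(Q)}$, is a solution of $\SRverygenpos$. By \cref{lem:genposIsSFDL}, the relation $\SRverygenpos$ is open and fiberwise dense, so \cref{cor:jigglingMfd} applies. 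We note that the relation $\SRverygenpos$ is fixed throughout the construction: it depends only on the finite collections $\SC_\Delta$ attached to the top-dimensional simplices $\Delta$ of the fixed triangulation $K$, not on any running subdivision.

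\cref{cor:jigglingMfd} produces an $\epsilon$-jiggling $(\tilde{s}', T')$ of $(\tilde{s}, T)$ such that $\tilde{s}'$ is a piecewise solution of $\SRverygenpos$ with respect to $T'$, $\tilde{s}'|_Q = \tilde{s}|_Q$, and the subdivision $K'$ of $K$ underlying $T'$ is a generalized crystalline subdivision. Taking $\epsilon$ small enough, $\tilde{s}'$ is again a section of $M \times N \to M$, say $\tilde{s}'(x) = (x, s'(x))$ for a piecewise smooth map $s': M \to N$ with $d_{C^1}(s, s') < \epsilon$; then $(s', T')$ is the desired $\epsilon$-jiggling of $(s, T)$, and $s'|_Q = s|_Q$ and ``$T'$ is a generalized crystalline subdivision of $T$'' follow from the corresponding properties of $\tilde{s}'$.

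Finally, since $s'$ is a piecewise smooth solution of $\SRverygenpos$ with respect to the generalized crystalline subdivision $K'$, the second part of \cref{lem:genposIsSFDL}, applied with $K'' = K'$, shows that $s'$ is in general position with respect to $\xi$; indeed $s'$ is in very general position with respect to $\xi$ by construction.
\end{proof}
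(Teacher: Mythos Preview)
Your proof is correct and follows essentially the same approach as the paper's: verify via \cref{lem:genposIsSFDL} that $\SRverygenpos$ is open and fiberwise dense, invoke \cref{cor:jigglingMfd}, and then use the second item of \cref{lem:genposIsSFDL} to conclude general position. The paper's version opens up the inductive construction inside \cref{cor:jigglingMfd} to re-verify that each intermediate $K^{(k)}$ is a generalized crystalline subdivision of the original $K$, whereas you simply cite the black-box conclusion of \cref{cor:jigglingMfd}; your route is a bit cleaner and equally valid, since that conclusion is exactly what is needed to apply \cref{lem:genposIsSFDL}. One minor remark: the sentence ``Taking $\epsilon$ small enough, $\tilde{s}'$ is again a section'' is superfluous, since \cref{cor:jigglingMfd} already outputs a section of $E = M\times N$.
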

\begin{proof}
    We have already observed that $\SC_\Delta$ is finite for each top-dimensional simplex $\Delta \in K$ and hence by the first item of \cref{lem:genposIsSFDL}, the relation $\SRverygenpos$ is open and fiberwise dense. This allows us to jiggle $(s,T)$ using \cref{cor:jigglingMfd}. If we check the proof of \cref{cor:jigglingMfd}, we see that it produces a sequence of sections $s^{(k)}$ that are piecewise smooth with respect to a triangulation $T:|K^{(k)}| \to M$, and are a piecewise smooth solution over $C_k$, where $\{C_k\}_{k\in K}$ is an exhaustion by compacts of $M$. Moreover, the functions $s^{(k)}$ and $s^{(k-1)}$ agree over $C_{k-2}$, and $K^{(k)}$ is a generalized subdivision of $K$ and a subdivision of $K^{(k-1)}$. Hence by the second item of \cref{lem:genposIsSFDL}, we see that the pair $(s',T')$ produced by \cref{cor:jigglingMfd} satisfies the current statement.
\end{proof}

It is also immediate that the same statement applies for general position with respect to a finite number of distributions. 

\subsubsection{Triangulations in general position}
The above result can then be applied to the identity map $\id : M \to M$, which is the setup for Thurston's jiggling.  The resulting jiggled map will be a piecewise immersion by construction. The fact that it is a homeomorphism follows from degree theory and is immediate if we take a small enough jiggling~\cite[Lecture 4]{lurie2009topics}. Thus:
\begin{corollary} \label{cor:jigglingTriangOpen}
Let $M$ be a manifold triangulated by $T: |K| \to M$ and endowed with a distribution $\xi \subset TM$. Then given
	\begin{itemize}
		\item a continuous function $\epsilon: M \to \R_+$, and
		\item a subcomplex $K'\subset K$ such that $T|_{\Op (|K'|)}$ is in very general position,
	\end{itemize} 
there exists an $\epsilon$-jiggling $T'$ of $T$ such that
\begin{itemize}
    \item $T'$ is in (very) general position with respect to $\xi$, and
    \item $T'|_{|K'|} = T|_{|K'|}$.
\end{itemize}
\end{corollary}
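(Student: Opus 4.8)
The plan is to deduce this from \Cref{cor:SectionGenPos} applied to the identity map, which is Thurston's classical trick of reducing the problem of jiggling a triangulation to that of jiggling a map.

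First, unwinding definitions, the hypothesis that $T|_{\Op(|K'|)}$ is in very general position says exactly that $\id\colon M\to M$, viewed as piecewise smooth with respect to $T$, is a solution of $\SRverygenpos$ over $\Op(|K'|)$ (this is \Cref{def:genposTriang} together with the analogous convention for very general position). So I would apply \Cref{cor:SectionGenPos} with $N=M$, the map $s=\id$, the subcomplex $Q=K'$, and $\epsilon$ replaced by a suitably rescaled positive function, small enough that the rescaling survives multiplication by an a priori bound on the first derivatives of $T$ (see below). This yields a map $s'\colon M\to M$, piecewise smooth with respect to a triangulation $\widehat T\colon|\widehat K|\to M$ with $\widehat K$ a generalized crystalline subdivision of $K$, such that $s'$ is in (very) general position with respect to $\xi$ and $\widehat T$, the distance $\dist1(s',\id)$ is as small as we please, and --- since the jiggling is relative to $K'$ --- both $s'|_{|K'|}=\id|_{|K'|}$ and $\widehat T|_{|K'|}=T|_{|K'|}$ (no subdivision takes place where $s$ is already a solution).

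Next I would define the candidate jiggling by $T':=s'\circ\widehat T$, i.e.\ $T'_\Delta:=s'\circ\widehat T_\Delta$ for $\Delta\in\widehat K$. The step I expect to demand the most attention, though it is routine, is verifying that $T'$ is genuinely a triangulation: being $C^1$-close to the identity, $s'$ is a piecewise immersion, and being in addition $C^0$-close to the identity it is a piecewise smooth homeomorphism $M\to M$ by a standard degree argument (e.g.\ \cite[Lecture 4]{lurie2009topics}); hence $T'=s'\circ\widehat T$ is a homeomorphism onto $M$, and each $T'_\Delta$ is a smooth embedding, being a composite of the smooth embedding $\widehat T_\Delta$ (whose image lies in one top simplex of $\widehat T$) with $s'$, which is smooth and injective there. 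Choosing the rescaling of $\epsilon$ so that $\|\mathrm{d}s'-\mathrm{Id}\|$ times the (locally bounded) norm of $\mathrm{d}\widehat T_\Delta$ stays below $\epsilon$ gives $\dist1(T',T)<\epsilon$, so that $(T',\widehat K)$ is an $\epsilon$-jiggling of $T$, and $T'|_{|K'|}=T|_{|K'|}$ follows from the previous paragraph.

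Finally, $T'$ is in (very) general position essentially tautologically. By \Cref{def:genposTriang} this amounts to $\id\colon M\to M$ being in (very) general position with respect to $\xi$ and $T'$; since $\id\circ T'=s'\circ\widehat T$ and $T'$ carries the same complex $\widehat K$ as $\widehat T$, the conditions of \Cref{def:genpos} for the pair $(\id,T')$ coincide verbatim with those for $(s',\widehat T)$, which hold because $s'$ is in very general position. Transversality of $T'$ to $\xi$ is likewise inherited: $s'$ is a piecewise immersion between equidimensional manifolds, so $\mathrm{d}T'_\Delta$ is onto and the transversality clause of \Cref{def:genpos} holds trivially. This establishes all the stated properties.
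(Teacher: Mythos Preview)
Your proposal is correct and follows essentially the same approach as the paper: apply \Cref{cor:SectionGenPos} to $\id\colon M\to M$, then observe that the resulting $s'$ is a piecewise immersion and a homeomorphism (by the degree argument in \cite[Lecture 4]{lurie2009topics}), so that $T':=s'\circ\widehat T$ is the desired triangulation. One small remark: your parenthetical ``no subdivision takes place where $s$ is already a solution'' is neither guaranteed nor needed---$\widehat T|_{|K'|}=T|_{|K'|}$ holds simply because $\widehat T$ is, as a map on $|K|$, equal to $T$ (it is $T$ restricted to a subdivided complex).
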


	\printbibliography
\end{document}